\newcommand{\norm}[1]{\left\lVert #1 \right\rVert}
\newtheorem{defi}{Definition}
\newtheorem{rem}{Remark}
\newtheorem{prop}{Proposition}
\newtheorem{lemme}{Lemma}
\newtheorem{theo}{Theorem}
\newtheorem{asum}{Assumption}
\newcommand{\R}{\mathbb R}
\newcommand{\C}{\mathbb C}
\newcommand{\N}{\mathbb N}
\newcommand{\Z}{\mathbb Z}
\renewcommand{\leq}{\leqslant}
\renewcommand{\geq}{\geqslant}
\newcommand{\pinf}{+\infty}
\newcommand{\minf}{-\infty}
\newcommand{\dxx}{\dfrac{\textrm{d}^2}{\textrm{d}x^2}}
\newcommand{\equ}[1]{\underset{#1}{\sim}}
\DeclareMathOperator\supp{supp}
\newcommand{\Va}[1]{{\color{green}VA:  #1}}
\title{Inverse resonance problem on the line for perturbations of Pöschl-Teller potentials}
\author{Valentin Arrigoni\footnote{Université Marie et Louis Pasteur, CNRS, LmB (UMR 6623), F-25000 Besançon, France, \href{mailto:valentin.arrigoni@math.cnrs.fr}{valentin.arrigoni@math.cnrs.fr}}}
\date{}
\begin{document}

\maketitle

\begin{abstract}
    We study an inverse resonance problem on the line in which we aim at determining a compactly supported and integrable perturbation of a fixed Pöschl-Teller potential. We define the resonances as the poles of the reflection coefficients with a negative imaginary part. Given the zeros and the poles of one of the reflection coefficients, we are able to determine uniquely the perturbation of a Pöschl-Teller potential when its support is to the left or to the right of zero on the line and also in the remaining cases under the addition of other hypothesis or extra spectral data. We also give asymptotics of the resonances and show that they are asymptotically located on two logarithmic branches.
\end{abstract}

\section{Introduction}
An inverse problem consists of determining the causes of a phenomenon based on empirical observations of its effects. In the inverse problem we are interested in, we consider a one-dimensional stationary Schrödinger equation,
\begin{equation}
-y''+V(x)y = z^2y, ~(x,z) \in \R \times \C, 
\label{Premiere}
\end{equation}
with a potential $V \in L^1(\R)$ which is a compactly supported perturbation of a Pöschl-Teller potential:
\begin{equation} \label{Pot}
  V(x) = \frac{\lambda}{\cosh^2(x)} + q(x), \quad \lambda \in \R^*_+, \quad q \in L^1_{comp}(\R).
\end{equation}
We also consider its two specific solutions with prescribed asymptotics denoted $f^\pm$: 
\[
f^\pm(x,z) \equ{x \to \pm \infty} e^{\pm ixz},
\]
called Jost solutions. From these solutions, we can define three Wronskians, $w(z) = [f^-(x,z),f^+(x,z)]$ and $s^\pm(z) = [f^+(x,\mp z),f^-(x,\pm z)]$ where $[f,g]:= fg'-f'g$ denote the Wronskian of functions $f$ and $g$. Under the general assumption $V \in L^1(\R)$, the function $w$ is always defined on the closed upper-half plane and analytic on the open upper-half plane, and its zeros with a positive imaginary part $ik$ for $k>0$ are such that $(ik)^2$ corresponds to \emph{eigenvalues} for the Schrödinger operator of \eqref{Premiere}. Indeed, for such a $z$, $f^+$ and $f^-$ are linearly dependent which ensure they are both in $L^2(\R)$. For our specific potentials $V$ given by (\ref{Pot}), we shall show that the function $w$ can be meromorphically continued to the lower half-plane and its zeros with a negative imaginary part are then called \emph{resonances}. Like the eigenvalues, for $z$ a resonance, $f^+$ and $f^-$ are still linearly dependent, but they are no longer in $L^2(\R)$. One can understand them as "eigenvalues without eigenvectors". 

We are interested here in an inverse resonance problem, that is we consider the question whether a potential $V$ can be uniquely determined from the spectral data consisting in the set of eigenvalues and resonances (the zeros of $w$). Using Hadamard's factorisation theorem and a universal asymptotic (see \eqref{HdamardW} and \eqref{EquivW}  below), we can determine $w$, and so the transmission coefficient $\mathcal{T}(z) = 2iz/w(z)$. To determine the reflection coefficients
\[
\mathcal{R}^\pm = \frac{s^\pm}{w},
\]
it is necessary to know $s^+$ or $s^-$. Since the scattering matrix is unitary, from $w$ one can deduce the product $s^\pm(z)s^\pm(-z)$ (see Lemma \ref{Lemme} below). In the case where the perturbation is even, $s^\pm(z) = s^\pm(-z)$, so $s^\pm$ is determined up to a sign. If $w$ does not vanish at $z=0$, then $s^\pm$ is fully determined as $w(0)=-s^\pm(0)$.
An application of the Gelfand-Levitan-Marchenko theory allows us to determine uniquely the potential in this case. Note that we recover similar results to the ones established by Zworski \cite{Zworski} for compactly supported potentials.
In the case where there is a half-bound state, Zworski showed however that there exists two potentials sharing the same resonances and eigenvalues.
Hence it is in general necessary to have additional spectral data in order to determine uniquely the potential. 

Before describing in more details our results, let us give a quick survey of the known results in inverse scattering on the line. Let us recall that in the Gelfand-Levitan-Marchenko theory, the knowledge of the eigenvalues, associated norming constants and one of the reflection coefficient allows to determine the so-called transformation operator, that in turns, determines uniquely the potential in $L_1^1(\R)$. See \cite[Chapters 1,4]{Levitan} for further explanations. Grébert and Weder \cite{GrebWed} proved that when the potential is a priori known on the half-line, and is in $L_1^1(\R)$, then the potential is uniquely determined by one of the reflection coefficients. To that end, they followed the same framework as Deift and Trubowitz \cite{DeiftTrubo}. 
Few years later, Hitrik \cite{Hitrik} showed that, for compactly supported potentials, the eigenvalues and norming constants can be determined from the reflection coefficient. 
Concerning the real line, Korotyaev \cite{Koro} showed that the eigenvalues, resonances and a sequence of $\pm 1$ allow to determine uniquely a compactly supported potential. Xu and Yang \cite{XuYang} proved uniqueness results for potential with support in $[0,1]$ which is a priori known on $[0,\frac{1}{2}]$ from the knowledge of the eigenvalues and resonances only. Finally Bledsoe \cite{Bledsoe} showed a stability estimate from the small eigenvalues, resonances and zeros of one of the reflection coefficients after proving that the zeros of $w$ and $s^-$ determine uniquely, up to a translation, a compacted supported potential.

Concerning the inverse resonance problem for the Schrödinger equation on the half-line, Korotyaev \cite{Korotyaev} proved that an integrable and compactly supported potential is uniquely determined from the eigenvalues and resonances. This result essentially uses the Marchenko inverse scattering method on the half-line. Another procedure is to use the Borg-Marchenko theory which asserts that the potential is uniquely determined from the Weyl-Titchmarsh function. So the question is then to determine the Weyl-Titchmarsh function from the eigenvalues and resonances. Brown and Weikard \cite{BrownWeikard} proved uniqueness for compactly supported perturbations of algebro-geometric potentials. More recently, Marletta, Shterenberg and Weikard \cite{MSW} established stability results from the small resonances, still for a compactly supported potential in $\R_+$. We finally mention the work of Borthwick, Boussaïd and Daudé \cite{BBD} in which they showed that, in the case of a compactly supported perturbation of an exponentially decreasing potential, eigenvalues and resonances are sufficient to determine uniquely the potential. 
The inverse resonance problem on the half-line was a lot studied.
We refer to the above cited works for other references.


In the present work, the terms inverse resonance problem have the same meaning as in the works of Zworski \cite{Zworski}, Korotyaev \cite{Koro} or Bledsoe \cite{Bledsoe}. Recall from Hitrik \cite{Hitrik} that in Gelfand-Levitan-Marchenko theory the reflection coefficient uniquely determines a compactly supported perturbation. The question we want to address now is whether it is possible to determine the reflection coefficients from the resonances.
In the case of even potentials with no half-bound state, as mentioned above, this is possible. 
In the general case, we pointed out earlier that the knowledge of $w$ only provides the zeros of $s^\pm(z)s^\pm(-z)$, hence the zeros of $s^\pm$ up to a sign.
This indetermination cannot be lifted, so we have to add to the scattering data the zeros of $s^\pm$ to determine this function. This choice, similar to the one of Bledsoe, is not the only one. We could have also chosen to add only the sign of the zeros of $s^\pm$ to the set of scattering data. This is not exactly an inverse resonance problem, but a generalisation, since on top of resonances we need to add another set of data.

We consider compactly supported perturbation of a Pöschl-Teller potential. The reflection coefficient will be determined as the quotient of $s^\pm$ and $w$. We will show that both $w$ and $s^\pm$ can be meromorphically extended to the whole complex plane with possible poles on $-i\N = \{-i,-2i,\dots\}$ for $w$ and $i\Z^* = \{ \pm i, \pm 2i, \dots \}$ for $s^\pm$. In order to apply Hadamard's factorisation theorem, we consider renormalisation $S^\pm$ and $W$ respectively of $s^\pm$ and $w$. These functions are analytic on the whole complex plane, and their zeros are determined by resonances, eigenvalues and zeros of $s^\pm$.
The present work can be understood as a generalisation of all these previous results to the class of compactly supported perturbations of Pöschl-Teller potential on the whole line. The techniques used in the first part of this work are similar to the ones developped by Bledsoe in \cite{Bledsoe} in his first part, and before him Marletta, Shterenberg and Weikard \cite{MSW} for example.

We want to point out that Equation \eqref{Premiere} comes from a geometrical problem. Let $\mathfrak{C}$ be an infinite hyperbolic cylinder 
\[
\mathfrak{C} \simeq (\R)_{x} \times (\mathbb{S}^{d-1})_{\omega}
\label{Hyperbolic}
\]
where $\mathbb{S}^{d-1}$ is the (d-1)-dimensional sphere ($d \geq 2$), equipped with the metric $g:= dx^2+\cosh^2(x)d\omega^2$. We recall that the Gaussian curvature of such a Riemannian surface is $-1$ (see Borthwick \cite{BorthwickBook} for further details on hyperbolic surfaces). Let $\Delta_g$ be the Laplace-Beltrami operator on this infinite hyperbolic cylinder. Then
\begin{equation}
\Delta_g  = \partial_x^2 + (d-1) \tanh(x) \partial_x + \Delta_{\mathbb{S}^{d-1}} \cosh^{-2}(x),
\label{Lap}
\end{equation}
where $\Delta_{\mathbb{S}^{d-1}}$ is the Laplace-Beltrami operator on the sphere $\mathbb{S}^{d-1}$. This operator is, through conjugation by $\cosh^{\frac{d-1}{2}}(x)$, equivalent to
\[
\partial_x^2+ \frac{\Delta_{\mathbb{S}^{d-1}}+(d-1)(d-3)/4}{\cosh^2(x)}-\frac{(d-1)^2}{4},
\]
\textit{i.e.}
\[
\cosh^{\frac{d-1}{2}}(x) \Delta_g \cosh^{-\frac{d-1}{2}}(x) f = \partial_x^2f + \left(\frac{\Delta_{\mathbb{S}^{d-1}}+(d-1)(d-3)/4}{\cosh^2(x)}-\frac{(d-1)^2}{4}\right) f,
\]
for $f \in L^2(\R \times S^{d-1},~ \mathrm{d}\omega_g)$.
We are interested in the Helmholtz equation: 
\begin{equation}
- \Delta_g f = \nu^2 f.
\label{Fourier}
\end{equation}
However, by doing the change of variable $f = \cosh^{-\frac{d-1}{2}}(x) v$ in \eqref{Fourier}, we obtain 
\[
-\partial_x^2 v+ \left(\frac{\Delta_{\mathbb{S}^{d-1}}+(d-1)(3-d)/4}{\cosh^2(x)}+\frac{(d-1)^2}{4}\right) v = \nu^2 v.
\]
We denote by $H_k$ the subspace of spherical harmonics of degree $k$. We recall that
\[
L^2(\mathbb{S}^{d-1}) = \bigoplus_{k \geq 0} H_k,
\]
and for every integer $k$, for every $Y_k \in H_k$,
\[
- \Delta_{\mathbb{S}^{d-1}} Y_k = k(k+d-2) Y_k.
\]
We refer to \cite{daudé2023local} for more details concerning spherical harmonics.
We are looking for solutions $v \in L^2(\R \times S^{d-1}, ~\mathrm{d}x\otimes\mathrm{d}\omega) = \bigoplus_{k \geq 0} L^2(\R) \otimes H_k$. It is enough to consider projections of solution of Equation $\eqref{Fourier}$ on the invariant sub-spaces $L^2(\R) \otimes H_k$ in the form
\[
v_k(x) Y_k(\omega).
\]
where $k \geq 0$ and $Y_k \in H_k$.
Then, functions $v_k$ satisfy the ordinary differential equation 
\[
-v_k''(x) + \frac{k(k+d-2)+(d-1)(3-d)/4}{\cosh^2(x)}v_k(x) = z^2 v_k(x),
\label{Equak}
\]
where $z^2 = \nu^2 - \frac{(d-1)^2}{4}$. This is exactly Equation \eqref{Premiere} without the compact perturbation $q$. Note that Equation \eqref{Premiere} with a potential $V$ being the sum of a Pöschl-Teller potential and a compactly supported perturbation corresponds to an infinite hyperbolic cylinder which is radially compactly perturbed.\footnote{More precisely, if we consider the metric 
$$
  g = dx^2 + a^2(x) \, d\omega^2, \quad a(x) = \cosh(x) + a_0(x), \quad a_0 \in C^2_{0}(\R),
$$
on the hyperbolic cylinder $\mathfrak{C}$ defined in \eqref{Hyperbolic}, then the stationary scattering would be exactly governed by equation (\ref{Premiere}).} We refer to \cite{BorthwickBook,Borthwick,ChristiansenZworski} for further information on the geometry, spectral theory and inverse resonance problem of infinite hyperbolic cylinder. 

At last, we emphasise that we would like to understand how waves propagate on this infinite hyperbolic cylinder and their scattering properties, since this model can be considered as a preliminary toy model in the analysis of black holes spacetime and their gravitational waves. Indeed, the stationary scattering of scalar waves in De Sitter–Schwarzschild black holes is governed by an infinite set of one-dimensional Schrödinger equations 
\[
-v'' + q_k(x) v = z^2 v
\]
similar to Equation \eqref{Equak}, where the potential $q_k$ is exponentially decreasing at $x = \pm \infty$ and possesses a unique non-degenerate maximum. We refer to \cite{JLJ} for more information concerning this toy model.

A second problem consider in this paper is the location of the resonances in the lower half-plane. The first to study the distribution of resonances is Regge \cite{Regge}. From a more mathematical perspective and considering a compactly supported potential, precise asymptotics were obtained by Zworski \cite{ZWORSKI1987277}. Guillopé and Zworski \cite{GuillopeZworski95,GuillopeZworski} provided information on the number of resonances on specific hyperbolic manifolds. Stepin and Tarasov \cite{StepTara07, StepTara09} gave asymptotics for resonances, considering a compactly supported or super exponentially decaying perturbation of the free potential. Similar results have already been obtained in \cite{BBD}, for non-compactly supported potential on the half-line only. We extend their work to our class of potentials (\ref{Pot}) defined on the whole line under the additional hypothesis that the compactly supported perturbation $q$ (or one of its derivatives) has some jump discontinuities at the boundary of its support. When the Pöschl-Teller potential is not perturbed, the resonances can be computed explicitly and lie on two vertical half-lines symmetric with respect to the imaginary negative axis. When we add a perturbation with a jump discontinuity, new resonances appear and asymptotically lie on two logarithmic branches symmetric with respect to the imaginary negative axis. See \cite{BBD} for a similar theoretical result. This finding confirms the numerical observations stated by Jaramillo, Panosso Macedo and Al Sheikh \cite{JLJ}. The techniques used in this section are inspired by the ones developed by Borthwick, Boussaïd and Daudé \cite{BBD}. 

This work is organised as follows. In a first section, we will give some classical definitions of scattering theory, some results on hypergeometric functions and results in the case of the non-perturbed Pöschl-Teller potential. Then in a second section, we state and prove the main theorems of this work, concerning the solution of an inverse resonance problem for a compactly perturbed Pöschl-Teller potential. Eventually, we study how resonances are perturbed by adding a compactly supported potential. This paper includes an appendix on hypergeometric functions, where we recall some standard results we use in the core of our analysis.

\section{Pöschl-Teller potentials}
We start with the following one-dimensional stationary Schrödinger equation 
\begin{equation}
-y''(x)+\frac{\lambda}{\cosh^2(x)}y(x)=z^2y(x),
\label{ES1}
\end{equation}
where $\lambda$ is a positive real fixed parameter, and $z$ a complex spectral parameter. The class of Pöschl-Teller potentials $\frac{\lambda}{\cosh^2(x)}$, was introduced for the first time in \cite{PT} in the study of quantum anharmonic oscillators.
Its Jost solutions denoted $f_0^\pm$ are solutions of \eqref{ES1} such that 
\begin{equation}
    f_0^\pm(x,z) \equ{x \to \pm \infty} e^{ \pm izx}.
    \label{J}
\end{equation}
They can explicitly be expressed in terms of hypergeometric functions: consider the change of variable
$\zeta:= \frac{1}{2}(1 - \tanh{x}) = (1+e^{2x})^{-1}$ and the function $f(x) = \cosh^{iz}(x)y(\zeta)$, where $f$ is a solution to \eqref{ES1}. 
Then we obtain an hypergeometric differential equation
%
\begin{equation}
\label{EH1}
\zeta(1-\zeta)y''+(c-(a+b+1)\zeta)y'-aby=0,
\end{equation}
with
\begin{equation}
a:= \frac{1}{2} - i z + \sqrt{\frac{1}{4}- \lambda},
\label{a}
\end{equation}
\begin{equation}
b:= \frac{1}{2} - i z - \sqrt{\frac{1}{4}- \lambda},
\label{b}
\end{equation}
\begin{equation}
c:= 1 - i z.
\label{c}
\end{equation}
Here, the notation $\sqrt{\cdot}$ refer to a complex determination of the square root. The choice is free, since it simply changes $a$ in $b$ and vice versa.
A solution to $\eqref{EH1}$ is given by an hypergeometric function
\[
\zeta \mapsto F(a,b,c;\zeta):= {}_2F_1(a,b,c;\zeta) = \sum_{n=0}^{\pinf} \frac{(a)_n(b)_n}{(c)_n}\frac{\zeta^n}{n!}.
\]
We refer to Definition \ref{Definition} in the Appendix for details about hypergeometric functions.
Note that $F(a,b,c;\zeta)$ are power series with radius of convergence 1. Furthermore,
\begin{equation}
    \lim_{\zeta \to 0}F(a,b,c;\zeta)=1.
    \label{limite0}
\end{equation}
At last, $F(a,b,c;\zeta)$ are well-defined for all $c \in \C \backslash \{0,-1,-2,\dots\}$, or equivalently for all $z \in \C \backslash (i\N)$
In the following, every hypergeometric function is denoted $F$ instead of ${}_2F_1$.
Under the condition $2c = a +b + 1$, Equation \eqref{EH1} is invariant under the change of variable $\zeta' = 1-\zeta$, so $\zeta \mapsto F(a,b,c;1-\zeta)$ is also a solution to \eqref{EH1}. Since the Wronskian between these two solutions is equal to
\[
[F(a,b,c;\zeta),F(a,b,c;1-\zeta)] = -\frac{\Gamma(c)^2}{\Gamma(a)\Gamma(b)}\zeta^{-c}(1-\zeta)^{-c},
\]
where $[f,g] = fg'-f'g$.
These two solutions are linearly independent if $a$ and $b$ are not non-positive integers.
We refer to the Appendix, especially Lemma \ref{LemmeAppendix2} and Kummer's formulas in Proposition \ref{Kummer} to obtain this Wronskian.
Hence Jost solutions are given by:
\begin{equation}
    f^+_0(x,z) = (2\cosh(x))^{iz} F\left(a,b,c;\frac{1}{1+e^{2x}}\right),
    \label{Jost+}
\end{equation}
\begin{equation}
    f^-_0(x,z) = (2\cosh(x))^{iz} F\left(a,b,c;\frac{e^{2x}}{1+e^{2x}}\right).
    \label{Jost-}
\end{equation}
Note that Equation \eqref{limite0} implies that these two hypergeometric functions satisfy 
\[
F\left(a,b,c;\frac{1}{1+e^{2x}}\right) \underset{x \to +\infty}{\to} 1 ~\text{and}~ F\left(a,b,c;\frac{e^{2x}}{1+e^{2x}}\right) \underset{x \to -\infty}{\to} 1.
\]
Since the Pöschl-Teller potential is even, we also have $f^+_0(-x,z) = f^-_0(x,z)$. At last, we can also write Jost solutions (see Appendix, Proposition \ref{NewExpF}) as:
\[
f^+_0(x,z) = e^{izx} F\left(c-a,c-b,c;\frac{1}{1+e^{2x}}\right),
\]
\[
f^-_0(x,z) = e^{-izx} F\left(c-a,c-b,c;\frac{e^{2x}}{1+e^{2x}}\right).
\]
Since Equation \eqref{ES1} has no first-order terms, then the Wronskian of two solutions is constant. We can so define two specific Wronskians, which are independent of the variable $x$, named Jost functions $w_0$ and $s^\pm_0$, by
\[
    w_0(z):= [f^{-}_0(\cdot,z),f^{+}_0(\cdot,z)]
    \label{w_0}
\]
and
\[
    s^\pm_0(z):= [f^+_0(\cdot,\mp z),f^-_0(\cdot,\pm z)].
    \label{s0}
\]
Relations between hypergeometric functions (details are given in the Appendix, see Proposition \ref{changement} for the derivation of hypergeometric functions, Lemmas \ref{LemmeAppendix1}, \ref{LemmeAppendix2} and Propositions \ref{changement}, \ref{Kummer}) give explicit expressions for $w_0$ and $s^\pm_0$:
\begin{equation}
w_0(z) = 2iz \frac{\Gamma(c-1)\Gamma(c)}{\Gamma(a)\Gamma(b)} = \frac{-2\Gamma(c)^2}{\Gamma(a)\Gamma(b)}, ~ z \in \C \backslash (-i\N),
\label{explicit}
\end{equation}
and
\[
s^\pm_0(z) = 2iz \frac{\Gamma(c)\Gamma(c-a-b)}{\Gamma(c-a)\Gamma(c-b)} = 2 \frac{\Gamma(c)\Gamma(2-c)}{\Gamma(c-a)\Gamma(c-b)}, z \in \C \backslash i\Z^*.
\]
There are three types of zeros of $w_0$. These interpretations still hold for any potential $V$. We will give a short explanation of their differences. Zeros with positive imaginary part $ik$ for $k>0$ are such that $(ik)^2$ are eigenvalues. Indeed, if $w_0(z)=0$, functions $f^+_0$ and $f^-_0$ are linearly dependent, and using Equation \eqref{J}, $f^+_0 \in L^2(\R)$ (and so is $f^-_0$). Zeros with a negative imaginary part are quite different. They are called resonances, and they can be understood as "eigenvalues without eigenvectors", due to the fact that for such $z$, $f^+_0 \notin L^2(\R)$ (and so is $f^-_0 \notin L^2(\R)$). We refer to \cite{DZ,Zworski} for detailed presentations of resonances. Their location is possible, due to the explicit formula \eqref{explicit}. For $\lambda \leq 1/4$, resonances appear for $z = -i\left(n+\frac{1}{2} \pm \sqrt{\frac{1}{4}-\lambda}\right)$, for any $n \geq 0$. While $\lambda>1/4$, they are located at $z = \pm \sqrt{\lambda - \frac{1}{4}} - i (n+\frac{1}{2})$, for any $n \geq 0$. Finally, when $w_0(0) = 0$, we say there is a half-bound state, or a zero resonance. 
We observe that, when $\lambda$ increases, the pair of resonances for the same integer $n$ gets closer along the imaginary axis, then for $\lambda = 1/4$ they are equal and finally split apart when $\lambda$ keeps increasing, See Figure \ref{Dessin}.

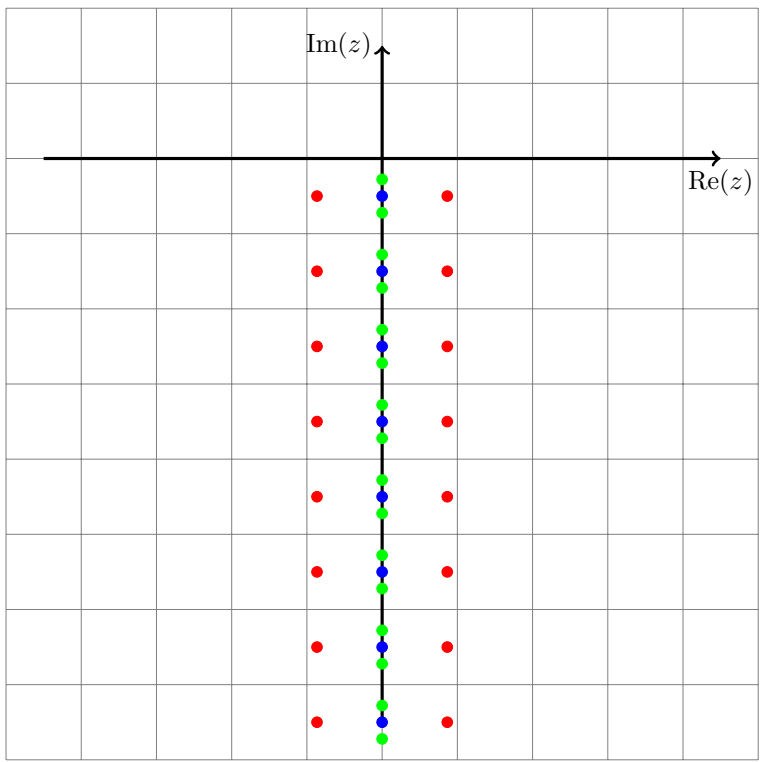
\begin{figure}[H]
\centering
\begin{tikzpicture}[scale=0.85]
    \draw[step=1cm, gray, very thin] (-5, -5) grid (5, 5);
    \draw[very thick, ->] (-4.5, 3) -- (4.5, 3) node[below]{$\Re(z)$};
    \draw[very thick, ->] (0, -4.5) -- (0, 4.5) node[left]{$\Im(z)$};
    \draw (1,3.1) -- (1,2.9) node[below] {1};
    \draw (-0.1,4) -- (0.1,4) node[left] {1};
    \filldraw[green](0,2.722) circle (2pt);
    \filldraw[green](0,2.278) circle (2pt);
    \filldraw[green](0,1.722) circle (2pt);
    \filldraw[green](0,1.278) circle (2pt);
    \filldraw[green](0,0.722) circle (2pt);
    \filldraw[green](0,0.278) circle (2pt);
     \filldraw[green](0,-0.278) circle (2pt);
    \filldraw[green](0,-0.722) circle (2pt);
     \filldraw[green](0,-1.278) circle (2pt);
    \filldraw[green](0,-1.722) circle (2pt);
     \filldraw[green](0,-2.278) circle (2pt);
    \filldraw[green](0,-2.722) circle (2pt);
     \filldraw[green](0,-3.278) circle (2pt);
    \filldraw[green](0,-3.722) circle (2pt);
     \filldraw[green](0,-4.278) circle (2pt);
    \filldraw[green](0,-4.722) circle (2pt);
    \filldraw[blue] (0,2.5) circle (2pt);
    \filldraw[blue] (0,1.5) circle (2pt);
    \filldraw[blue] (0,0.5) circle (2pt);
    \filldraw[blue] (0,-0.5) circle (2pt);
    \filldraw[blue] (0,-1.5) circle (2pt);
    \filldraw[blue] (0,-2.5) circle (2pt);
    \filldraw[blue] (0,-3.5) circle (2pt);
    \filldraw[blue] (0,-4.5) circle (2pt);
    \filldraw[red] (0.866,2.5) circle (2pt);
    \filldraw[red] (-0.866,2.5) circle (2pt);
    \filldraw[red] (0.866,1.5) circle (2pt);
    \filldraw[red] (-0.866,1.5) circle (2pt);
    \filldraw[red] (0.866,0.5) circle (2pt);
    \filldraw[red] (-0.866,0.5) circle (2pt);
    \filldraw[red] (0.866,-0.5) circle (2pt);
    \filldraw[red] (-0.866,-0.5) circle (2pt);
    \filldraw[red] (0.866,-1.5) circle (2pt);
    \filldraw[red] (-0.866,-1.5) circle (2pt);
    \filldraw[red] (0.866,-2.5) circle (2pt);
    \filldraw[red] (-0.866,-2.5) circle (2pt);
    \filldraw[red] (0.866,-3.5) circle (2pt);
    \filldraw[red] (-0.866,-3.5) circle (2pt);
    \filldraw[red] (0.866,-4.5) circle (2pt);
    \filldraw[red] (-0.866,-4.5) circle (2pt);
\end{tikzpicture}
\captionsetup{justification=centering}
\caption{Resonances in the lower half-plane:
\\ $\lambda=1/5$ in green, $\lambda = 1/4$ in blue and $\lambda =1$ in red.}
\label{Dessin}
\end{figure}

A straightforward calculation shows that
$[f^\pm_0(\cdot,z),f^\pm_0(\cdot,-z)]=\mp2iz$, so for every $z \in \C \backslash i\Z$, we have two fundamental systems of solutions for
Equation \eqref{ES1}: $\left(f^\pm_0(\cdot,z), f^\pm_0(\cdot,-z) \right)$. Every solution to Equation \eqref{ES1} can be written in these basis. So
\begin{equation*}
    f^\pm_0(\cdot,z) = \frac{w_0(z)}{2iz}f^\mp_0(\cdot,-z)+\frac{s^\mp_0(z)}{2iz}f^\mp_0(\cdot,z),~z \in \C \backslash i\Z.
\end{equation*}
The transmission coefficient $\mathcal{T}_0$ is defined by $\mathcal{T}_0(z):= 2iz/w_0(z)$ while the right and left reflection coefficients $\mathcal{R}^\pm_0$ are defined by $\mathcal{R}^\pm_0(z):= s^\pm_0(z)/w_0(z)$. They play an important role in the resolution of our inverse problem. Using the Gelfand-Levitan-Marchenko, the knowledge of these coefficients is sufficient to uniquely determine the original potential. More details about that are given in \cite{Bledsoe, Levitan} and in the proof of Theorem \ref{Theorem}.

Recall that the hypergeometric function $F(a,b,c;\zeta)$ are actually not defined at $c \in \{0,-1,-2,\dots\}$ (hence for $z \in \{-i,-2i,-3i,\dots\}$). These poles are created by the term $\Gamma(c)$ due to the presence of the Pochhammer coefficient $(c)_n$ at the denominator. To extend them to the whole complex plane, one can multiply the hypergeometric function by $1/\Gamma(c)$ and consider the analytic continuation of the function
\[
z \mapsto \frac{1}{\Gamma(c)}{F}(a,b,c;\zeta)
\]
to the whole complex plane.
Jost functions \eqref{Jost+} and \eqref{Jost-} are modified accordingly to this change: 
\begin{equation}
    \mathbf{f}^+_0(x,z):= \frac{1}{\Gamma(c)}f^+_0(x,z),
    \label{Jost++}
    \notag
\end{equation}
\begin{equation}
    \mathbf{f}^-_0(x,z):= \frac{1}{\Gamma(c)}f^-_0(x,z),
    \notag
    \label{Jost--}
\end{equation}
and so are the Jost functions $w_0$ and $s^\pm_0$, now defined on the whole complex plane by 
\begin{equation}
W_0(z):= [\mathbf{f}^-_0(x,z),\mathbf{f}^+_0(x,z)] ~\text{and}~ S^\pm_0(z):= [\mathbf{f}^+_0(x,\mp z),\mathbf{f}^-_0(x,\pm z)].
\label{JostFunction0Re}
\end{equation}
Since we will use classical results from complex analysis such as the Hadamard factorisation theorem, we will constantly use the above normalisation $W_0$ and $S^\pm_0$ in our calculations.
\section{Compact perturbations of a Pöschl-Teller potential}
\subsection{Jost solutions and scattering data}

We now consider a perturbation $q$ of the Pöschl-Teller potential.
We assume that this function $q$ is integrable and compactly supported, with its support in $[\alpha,\beta]$. With this new potential, Equation \eqref{Premiere} becomes
\begin{equation}
    -y''(x)+\left( \underbrace{\frac{\lambda}{\cosh^2(x)} + q(x)}_{:=V(x)} \right) y(x) = z^2y(x).
    \label{ES2}
\end{equation}
In terms of operator theory, we are interested in
\begin{equation}
H:= -\dxx + \frac{\lambda}{\cosh^2(x)} + q(x),
\label{H}
\end{equation}
it is a perturbation of
\begin{equation}
H_0:= -\dxx + \frac{\lambda}{\cosh^2(x)}.
\label{H0}
\end{equation}
The solutions of $Hf = z^2f$ are linked with those of $H_0 f=z^2 f$ through the use of transformation operators. We refer to Levitan \cite{Levitan} for more information about transformation operator theory.
\begin{defi}
We define the transformation operator $X^+$ which satisfies:
\begin{equation}
X^+H_0 = HX^+.
\label{Intertwining+}
\end{equation}
Furthermore, we assume that $X^+$ has the following expression:
\[
X^+(f)(x) = f(x) + \int_x^{\pinf} K^+(x,t) f(t) \, \mathrm{d}t,
\]
where $K^+$ is a function such that $X^+$ satisfies the intertwining property \eqref{Intertwining+}.
\label{DefX+}
\end{defi}
Following Levitan \cite[Chapter 1, Section 1.4, Paragraph 1.4.2]{Levitan} (see also \cite[Section 6.4]{Levitan}), this intertwining identity \eqref{Intertwining+} shows that if $f_0^+$ is the Jost solution of Equation \eqref{ES1}, then $Xf_0^+$ is the Jost solution associated with Equation \eqref{ES2}, \textit{i.e.}
\[
f^+(x,z) = f^+_0(x,z) + \int_x^{\pinf} K^+(x,t) f^+_0(t,z) \, \mathrm{d}t.
\]

Similarly, we can also obtain an expression for the Jost solution $f^-$ using the next definition.
\begin{defi} \label{DefX-}
We define the transformation operator $X^-$ which satisfies:
\begin{equation}
X^-H_0 = HX^-.
\label{Intertwining-}
\end{equation}
Furthermore, we assume that $X^-$ has the following expression:
\[
X^-(f)(x) = f(x) + \int_{\minf}^x K^-(x,t) f(t) \, \mathrm{d}t,
\]
where $K^+$ is a function such that $X^+$ satisfies the intertwining property \eqref{Intertwining-}.
\end{defi}
As a consequence, we will be able to write $f^-$ as
\[
f^-(x,z) = f^-_0(x,z) + \int_{\minf}^x K^-(x,t) f^-_0(t,z) \, \mathrm{d}t.
\]
Now, we will give some properties of these kernels $K^\pm$, starting with the existence of these two functions.
Let $E_\beta$ ($E_\alpha$, respectively) the set of continuous and bounded functions on $\{ (x,t) \in \R^2 ~|~ x \leq t \}$ ($\{ (x,t) \in \R^2 ~|~ t \leq x \}$, respectively) and with 
a support in $\{ (x,t) \in \R^2 ~|~ x \leq t \leq 2 \beta - x \}$ ($\{ (x,t) \in \R^2 ~|~ 2 \alpha - x \leq t \leq x \}$, respectively). We endow these two sets with the supremum norm.

Let $\Phi_\beta$ be the function defined on $E_\beta$ by:
\[
\begin{split}
        \Phi_\beta (L) (x,t) &:= \dfrac{1}{2}\int_{\frac{1}{2}(x+t)}^{\beta} q(\tau) \, \mathrm{d}\tau \\
        & + \int_{\frac{t+x}{2}}^{\beta}\int_{0}^{\frac{t-x}{2}} \left( V(\tau - u) - \frac{\lambda}{\cosh^2(\tau + u)} \right) L(\tau-u,\tau+u) \, \mathrm{d}u \, \mathrm{d}\tau.
\end{split}
\]
Similarly, we can define a function $\Phi_\alpha$ on $E_\alpha$: 
\[
\begin{split}
        \Phi_\alpha (L) (x,t) &:= \dfrac{1}{2}\int_{\alpha}^{\frac{x+t}{2}} q(\tau) \, \mathrm{d}\tau \nonumber \\
        + \int_{\alpha}^{\frac{x+t}{2}}&\int_{\frac{t-x}{2}}^0 \left( V(\tau - u) - \frac{\lambda}{\cosh^2(\tau + u)} \right) L(\tau-u,\tau+u) \, \mathrm{d}u \, \mathrm{d}\tau.
\end{split}
\]
Following the proof of \cite[Lemma 1]{BrownWeikard}, we obtain the existence of an integer $n$ such that $(\Phi_{\beta})^n$ is a contraction map, and similarly for $\Phi_{\alpha}$. According to Banach's fixed point theorem, $\Phi_\beta$ and $\Phi_\alpha$ have each a unique fixed point on $E_{\beta}$ and $E_{\alpha}$.
We define $K^+$ as the fixed point
\begin{align}
        K^+(x,t) &= \dfrac{1}{2}\int_{\frac{1}{2}(x+t)}^{\beta} q(\tau) \, \mathrm{d}\tau \nonumber \\
         + \int_{\frac{t+x}{2}}^{\beta}&\int_{0}^{\frac{t-x}{2}} \left( V(\tau - u) - \frac{\lambda}{\cosh^2(\tau + u)} \right) K^+(\tau-u,\tau+u) \, \mathrm{d}u \, \mathrm{d}\tau,
\label{K+equa}
\end{align}
for $x \leq t$. Similarly, we define $K^-$ as fixed point
\begin{align}
    K^-(x,t) & = \dfrac{1}{2}\int_{\alpha}^{\frac{x+t}{2}} q(\tau) \, \mathrm{d}\tau \nonumber \\
        + \int_{\alpha}^{\frac{x+t}{2}}&\int_{\frac{t-x}{2}}^0 \left( V(\tau - u) - \frac{\lambda}{\cosh^2(\tau + u)} \right) K^-(\tau-u,\tau+u) \, \mathrm{d}u \, \mathrm{d}\tau,
\label{K-equa}
\end{align}
for $t \leq x$.
We summarise the main properties of these two kernels in the following proposition:
\begin{prop}
The potential $q$ is an integrable and compactly supported function, with its support satisfying $\supp{q} \subset [\alpha,\beta]$.
The kernels $K^\pm$ are real-valued, and their supports are included in $\{(x,t) \in \R^2 ~|~x\leq t \leq2\beta-x\}$ for $K^+$ and $\{(x,t) \in \R^2 ~|~2 \alpha - x\leq t \leq x\}$ for $K^-$.
$K^\pm$ also satisfy:
\begin{equation}
    K^+(x,2 \beta -x) = 0 ~\text{and}~ K^+(x,x) = \frac{1}{2} \int_x^\beta q(s) \, \mathrm{d}s,
    \label{K+}
\end{equation}
and
\begin{equation}
    K^-(x,2 \alpha -x) = 0 ~\text{and}~ K^-(x,x) = \frac{1}{2} \int_\alpha^x q(s) \, \mathrm{d}s.
    \label{K-}
\end{equation}
Furthermore these functions are continuous and they satisfy the estimates
\begin{equation}
    |K^\pm(x,t)| \leq \frac{1}{2} \norm{q}_1 \exp(\norm{V}_{1,1} + \norm{\frac{\lambda}{\cosh^2(\cdot)}}_{1,1}),
    \label{estimation}
\end{equation}
where $\norm{f}_{1,1}:= \int_\R |t| |f(t)| \, \mathrm{d}t$.
Moreover, in the interior of their supports, $K^\pm$ have first order partial derivatives such that
\begin{align}
    & \left| \partial_u K^\pm(x,t) \pm \frac{1}{4}q\left( \frac{x+t}{2} \right) \right| \notag \\
    & \leq \frac{1}{2} \norm{q}_1 \left( \norm{V}_{1} + 2|\lambda| \right) \exp(\norm{V}_{1,1} + \norm{\frac{\lambda}{\cosh^2(\cdot)}}_{1,1}),
    \label{Deriv}
\end{align}
where $u$ stands for $x$ or $t$.
\label{Noyaux}
\end{prop}
\begin{proof}
    The proofs of \eqref{K+} (\eqref{K-}, respectively) and \eqref{estimation} both use the integral equation \eqref{K+equa} (\eqref{K-equa}, respectively) which defines the kernel $K^+$ ($K^-$, respectively). By taking 
    $t = x$ or $t = 2\beta-x$ ($t = 2\alpha-x$, respectively), we have \eqref{K+} (\eqref{K-}, respectively). For \eqref{estimation}, we also start with \eqref{K+equa} (and then \eqref{K-equa}), and apply Gronwall's inequality. Finally, we use \eqref{estimation} in the expression $\partial_u K^\pm(x,t) \pm \frac{1}{4}q\left( \frac{x+t}{2} \right)$ to prove \eqref{Deriv}. 
\end{proof}

We will need some information on higher derivatives of $K^\pm$. For this, following Zworski \cite{Zworski}, we introduce an additional assumption on the potential $q$. This assumption will play an important role in the study of the localisation of resonances, particularly in Theorems \ref{Theo4}, \ref{Nozero} and \ref{NozeroR-} below.
\begin{asum}
Let $p$ and $r$ be natural numbers.
The potential $q$ is an integrable and compactly supported function, with its support satisfying $\supp{q} \subset [\alpha,\beta]$. It has the following regularity: ${q}\in C^{N}([\alpha,\beta])$ for $N$ a natural integer such that $p,r \leq N$. Furthermore, we assume that 
\[
q(x) \sim C_1(x-\alpha)^{r-1}, ~ \text{as}~ x \to \alpha^+,
\]
and
\[
q(x) \sim C_2(x-\beta)^{p-1}, ~ \text{as}~ x \to \beta^-,
\]
where $C_1$ and $C_2$ are non-zero real constants.
\label{Asum}
\end{asum}
Then, we can prove some properties of the kernels $K^\pm$.
\begin{prop}
    Let $q$ be a real-valued, integrable and compactly supported function, with its support satisfying $\supp{q} \subset [\alpha,\beta]$. 
    \begin{itemize}
    \item The derivatives
    \[
    \partial_{\frac{t+x}{2}}\partial_{\frac{t-x}{2}}K^\pm,\quad \partial_{\frac{t-x}{2}}\partial_{\frac{t+x}{2}}K^\pm
    \]
    exist and are both equal to 
    \[
    \left(\frac{\lambda}{\cosh^2(t)} - V(x) \right) K^\pm(x,t).
    \]
    \item The two kernels satisfy the following wave equation
    \[
    \partial_x^2 K^\pm(x,t) - \partial_t^2 K^\pm(x,t) = \left(V(x) - \frac{\lambda}{\cosh^2(t)} \right) K^\pm(x,t)
    \]
    in a distributional sense.
    Moreover, when $q$ is $C^1(\R)$, the previous wave equation is satisfied in the strong sense.
    
    \item 
    Assume that $q$ satisfies Assumption \ref{Asum}.
    Then $K^+$ is in $C^{N+1}(\Omega^+)$, 
    where $\Omega^+ := \{ (x,t) \in \R^2 ~|~ x \leq t, ~ t \neq 2\beta-x ~\text{and}~ t \neq 2\alpha-x ~ \}$.
    Similarly, $K^-$ is in $C^{N+1}(\Omega^-)$,
    where $\Omega^- = \{ (x,t) \in \R^2 ~|~ t \leq x, ~ t \neq 2\beta-x ~\text{and}~ t \neq 2\alpha-x \}$.
    \item If we assume that $q$ satisfies Assumption \ref{Asum}, we have for $x$ a real number and $k$ an integer in $\{1,2,\dots,p-1\}$:
    \[
    \partial_t^k K^+(x,(2\beta - x)^-) = -\frac{1}{4} q^{(k-1)}(\beta^-) = 0,
    \]
    and
    \begin{equation}
        \partial_t^p K^+(x,(2\beta - x)^-) = -\frac{1}{4} q^{(p-1)}(\beta^-) \neq 0.
        \label{2beta}
    \end{equation}
    We also have, for $x$ a real number and $k$ an integer in $\{1,2,\dots,r-1\}$:
    \[
     \partial_t^r K^-(x,(2\alpha-x)^+) = -\frac{1}{4} q^{(r-1)}(\alpha^+) = 0,
    \]
    and
    \begin{equation}
        \partial_t^r K^-(x,(2\alpha-x)^+) = -\frac{1}{4} q^{(r-1)}(\alpha^+) \neq 0.
        \label{2alpha}
    \end{equation}
    \end{itemize}
    \label{PropNoyau}
\end{prop}
\begin{proof}
A similar proof of this proposition is given in \cite[Lemma 4]{BBD} when $q \in C^k$. For $L \in E_\beta$, and due to the continuity of translation in $L^1(\R)$, the quantity
\[
\int_{0}^{\frac{t-x}{2}} \left( V\left(\frac{t+x}{2} - u\right) - \frac{\lambda}{\cosh^2(\frac{t+x}{2} + u)} \right) L\left(\frac{t+x}{2}-u,\frac{t+x}{2}+u\right) \, \mathrm{d}u
\]
is absolutely continuous in $\frac{t-x}{2}$ and continuous in $\frac{t+x}{2}$, and the quantity 
\[
\int_{\frac{t+x}{2}}^{\beta} \left( V\left(\tau - \frac{t-x}{2}\right) - \frac{\lambda}{\cosh^2(\tau + \frac{t-x}{2})} \right) L\left(\tau-\frac{t-x}{2},\tau+\frac{t-x}{2}\right) \, \mathrm{d}\tau
\]
is absolutely continuous in $\frac{t+x}{2}$, and continuous in $\frac{t-x}{2}$. Then we have, according to the fundamental theorem of calculus,
\begin{align}
& \partial_{\frac{t+x}{2}} \left[ \int_{\frac{t+x}{2}}^{\beta}\int_{0}^{\frac{t-x}{2}} \left( V(\tau - u) - \frac{\lambda}{\cosh^2(\tau + u)} \right) L(\tau-u,\tau+u) \, \mathrm{d}u \, \mathrm{d}\tau \right] \notag \\
& = - \int_{0}^{\frac{t-x}{2}} \left( V\left(\frac{t+x}{2} - u\right) - \frac{\lambda}{\cosh^2(\frac{t+x}{2} + u)} \right) L\left(\frac{t+x}{2}-u,\frac{t+x}{2}+u\right) \, \mathrm{d}u.
\label{t+x}
\end{align}
Using Fubini-Tonelli's theorem, we also have
\begin{align}
    & \partial_{\frac{t-x}{2}} \left[ \int_{\frac{t+x}{2}}^{\beta}\int_{0}^{\frac{t-x}{2}} \left( V(\tau - u) - \frac{\lambda}{\cosh^2(\tau + u)} \right) L(\tau-u,\tau+u) \, \mathrm{d}u \, \mathrm{d}\tau \right] \notag \\
    & = \int_{\frac{t+x}{2}}^{\beta} \left( V\left(\tau - \frac{t-x}{2}\right) - \frac{\lambda}{\cosh^2(\tau + \frac{t-x}{2})} \right) L\left(\tau-\frac{t-x}{2},\tau+\frac{t-x}{2}\right) \, \mathrm{d}\tau.
    \label{t-x}
\end{align}
In the sense of distributions on the set $\{ (x,t) \in \R^2 ~|~ x < t \}$, we have
\begin{align}
    & \partial_{\frac{t+x}{2}} \partial_{\frac{t-x}{2}} \left[ \int_{\frac{t+x}{2}}^{\beta}\int_{0}^{\frac{t-x}{2}} \left( V(\tau - u) - \frac{\lambda}{\cosh^2(\tau + u)} \right) L(\tau-u,\tau+u) \, \mathrm{d}u \, \mathrm{d}\tau \right] \notag \\
    & = \partial_{\frac{t-x}{2}}\partial_{\frac{t+x}{2}}\left[ \int_{\frac{t+x}{2}}^{\beta}\int_{0}^{\frac{t-x}{2}} \left( V(\tau - u) - \frac{\lambda}{\cosh^2(\tau + u)} \right) L(\tau-u,\tau+u) \, \mathrm{d}u \, \mathrm{d}\tau \right] \notag \\
    & = \left( V(x) - \frac{\lambda}{\cosh^2(t)} \right) L(x,t).
    \label{PrevEq}
\end{align}
Note that if $q$ is continuous, then 
\[
- \int_{0}^{\frac{t-x}{2}} \left( V\left(\frac{t+x}{2} - u\right) - \frac{\lambda}{\cosh^2(\frac{t+x}{2} + u)} \right) L\left(\frac{t+x}{2}-u,\frac{t+x}{2}+u\right) \, \mathrm{d}u
\]
is differentiable with respect to $\frac{t-x}{2}$, similarly the quantity \eqref{t-x} is differentiable with respect to $\frac{t+x}{2}$ and \eqref{PrevEq} holds in a strong sense.
It directly follows that $K^+$ satisfies the wave equation
\[
\partial_x^2 K^+(x,t) - \partial_t^2 K^+(x,t) = \left( V(x) - \frac{\lambda}{\cosh^2(t)} \right) K^+(x,t),
\]
again in a distributional sense, and if $q$ is $C^1$, this equation holds in a strong sense.

For the next point, under the assumption that $q \in C^{N}(\R)$, where $N$ is defined in Assumption \ref{Asum}, we prove that $K^+$ is in $C^{N+1}(\Omega^+)$.
We define the application $\Phi$ by
\[
\Phi: L \mapsto \int_{\frac{t+x}{2}}^{\beta}\int_{0}^{\frac{t-x}{2}} \left( V(\tau - u) - \frac{\lambda}{\cosh^2(\tau + u)} \right) L(\tau-u,\tau+u) \, \mathrm{d}u \, \mathrm{d}\tau.
\]
Let $K_0(x,t):= \frac{1}{2}\int_{\frac{t+x}{2}}^{\beta} q(s) \, \mathrm{d}s$ and $K_{m+1}(x,t):= \Phi(K_m)(x,t)$.
For $l \in \N$, $\Phi$ maps $C^{l}(\Omega^+)$ to $C^{\min(l+1,N+1)}(\Omega^+)$. By induction, since $K_0 \in C^{N+1}(\Omega^+)$, it follows that $K_m \in C^{N+1}(\Omega^+)$ for any $m$. 
We can also rewrite $K^+$, using its definition as a fixed point,
\[
\begin{split}
K^+ & = K_0 + \Phi(K^+) \\
& = \sum_{j = 0}^{k} K_j + \Phi^{k+1}(K^+).
\end{split}
\]
Using the support of $K^+$ and the fact that this quantity is continuous, $K^+ \in C^{0}(\Omega^+)$. 
Then, all the terms in this sum are in $C^{N+1}(\Omega^+)$, so is $K^+$.

To conclude on the last item, we remark that if $q \in C^{k-1}_{pw}(\R)$ (piecewise $C^{k-1}$), where $k \in \N$, then the formulas \eqref{t+x} and \eqref{t-x} show that for $L \in C^{l}(\Omega^+)$, for $l \in \N \cup \{0\}$,
\[
\partial_{\frac{t+x}{2}} \left[ \int_{\frac{t+x}{2}}^{\beta}\int_{0}^{\frac{t-x}{2}} \left( V(\tau - u) - \frac{\lambda}{\cosh^2(\tau + u)} \right) L(\tau-u,\tau+u) \, \mathrm{d}u \, \mathrm{d}\tau \right]
\]
and 
\[
\partial_{\frac{t-x}{2}} \left[ \int_{\frac{t+x}{2}}^{\beta}\int_{0}^{\frac{t-x}{2}} \left( V(\tau - u) - \frac{\lambda}{\cosh^2(\tau + u)} \right) L(\tau-u,\tau+u) \, \mathrm{d}u \, \mathrm{d}\tau \right]
\]
are $C^{\min(l,k-1)}$ in $\frac{t+x}{2}$ and $\frac{t-x}{2}$, respectively. If we assume moreover that $q \in C^{k-2}(\R)$, then the quantity $\left( V(x) - \frac{\lambda}{\cosh^2(t)} \right) L(x,t)$ is $C^{\min(l,k-2)}(\Omega^+)$, and so the quantity
\[
\int_{\frac{t+x}{2}}^{\beta}\int_{0}^{\frac{t-x}{2}} \left( V(\tau - u) - \frac{\lambda}{\cosh^2(\tau + u)} \right) L(\tau-u,\tau+u) \, \mathrm{d}u \, \mathrm{d}\tau
\]
is $C^{\min(l+1,k)}(\Omega^+)$. From that, for $L = K^+$, the previous result shows that $K^+ - K_0$ is $C^{k}(\Omega^+)$. Since $q$ satisfies Assumption \ref{Asum}, for $k=p$, and using the fact that $K^+(x,t) = K_0(x,t) = 0$ for $t > 2 \beta -x$, we straightly deduce
\[
\partial_x^p(K^+ - K_0)(x,(2\beta - x)) = 0.
\]
Then, 
\[
\partial_x^p(K^+-K_0)(x,(2\beta - x)^-) = \partial_x^p K^+(x,(2\beta - x)^-) + \frac{1}{4} q^{(p-1)}(\beta^-)
\]
and so
\[
\partial_x^p K^+(x,(2\beta - x)^-) = -\frac{1}{4} q^{(p-1)}(\beta^-).
\]
Considering $K^-$, we can similarly prove that $K^- \in C^{N+1}(\Omega^-)$. Then, the same reasoning gives the other equality
\[
\partial_x^r K^-(x,(2\alpha-x)^+) = -\frac{1}{4} q^{(r-1)}(\alpha^+).
\qedhere
\]
\end{proof}
\begin{rem}
The second point of Proposition \ref{PropNoyau} shows that $X^+$ from Definition \ref{DefX+} and $X^-$ from Definition \ref{DefX-} satisfy respectively \eqref{Intertwining+} and \eqref{Intertwining-}.
\end{rem}
Using this transformation operators theory, we can finally define
\begin{align}
    f^+(x,z) &= f^+_0(x,z) + \int_x^{2\beta-x} K^+(x,t)  f^+_0(t,z) \, \mathrm{d}t~
    \label{Eq:Jost+}\\
    f^-(x,z) &= f^-_0(x,z) + \int_{2\alpha - x}^{x} K^-(x,t)  f^-_0(t,z) \, \mathrm{d}t~
    .\label{Eq:Jost-}
\end{align}

These two functions $f^\pm$ still satisfy \eqref{J} and \eqref{ES2}, so they are Jost solutions associated with Equation \eqref{ES2}.
As we did in the previous part, we can define Jost functions $w$ and $s^\pm$ as the two following Wronskians
\begin{equation*}
    w(z):= [f^-(\cdot,z),f^+(\cdot,z)]
\end{equation*}
and
\begin{equation*}
    s^\pm(z):= [f^+(\cdot,\mp z),f^-(\cdot,\pm z)].
\end{equation*}
A straightforward calculation for a well-chosen $x$ gives us that $[f^\pm(\cdot,z),f^\pm(\cdot,-z)] = \mp 2iz$, for $z \in \C \backslash i\Z$, then the two couples of Jost solutions $(f^\pm(\cdot,z),f^\pm(\cdot,-z))$ are two fundamental systems of solution for Equation \eqref{ES2}. So
\[
f^\pm(\cdot,z) = \frac{w(z)}{2iz}f^\mp(\cdot,-z)+\frac{s^\mp(z)}{2iz}f^\mp(\cdot,z),~z \in \C \backslash i\Z.
\]
We define for $z \in \C \backslash i\Z$
the transmission coefficient $\mathcal{T}(z):= 2iz/w(z)$, the right and left reflection coefficients $\mathcal{R}^\pm(z):= s^\pm(z)/w(z)$ and finally the scattering matrix 
\[
\mathcal{S}(z):= \begin{pmatrix}
\mathcal{T}(z) & \mathcal{R}^-(z) \\
\mathcal{R}^+(z) & \mathcal{T}(z) 
\end{pmatrix}.
\]
Again, due to the use of the hypergeometric functions, $f^\pm$ may have poles at $z \in i\Z$. To extend them to the whole complex plane, one have to consider the normalised Jost solutions
\begin{align}
    \mathbf{f}^+(x,z) &:= \frac{1}{\Gamma(c)}f^+(x,z),
    \label{fb+}
\end{align}
and
\begin{align}
    \mathbf{f}^-(x,z) &:= \frac{1}{\Gamma(c)}f^-(x,z).
    \label{fb-}
\end{align}
The normalised Jost functions $W$ and $S^\pm$ are defined similarly to $W_0$ and $S_0^\pm$ (see \eqref{JostFunction0Re}):
\[
W(z):= [\mathbf{f}^-(x,z),\mathbf{f}^+(x,z)] ~\text{and}~ S^\pm(z) := [\mathbf{f}^+(x,\mp z),\mathbf{f}^-(x,\pm z)].
\]
When $z \in \C \backslash (-i\N)$ for $W$  and $z \in \C \backslash i\Z^*$ for $S^\pm$, we have 
\begin{equation}
W(z) = \frac{w(z)}{\Gamma(1-iz)^2} ~\text{and}~ S^\pm(z) = \frac{s^\pm(z)}{\Gamma(1-iz)\Gamma(1+iz)}.
\label{WS}
\end{equation}
With this normalisation, we have $[\mathbf{f}^\pm(x,z),\mathbf{f}^\pm(x,-z)] = \frac{\mp 2iz}{\Gamma(1-iz)\Gamma(1+iz)}$. So, the expressions of the normalised Jost solutions in the basis $(\mathbf{f}^\pm(\cdot,-z),\mathbf{f}^\pm(\cdot,z))$ are now:
\[
\mathbf{f}^\pm(\cdot,z) = \frac{W(z)}{\frac{2iz}{\Gamma(1-iz)\Gamma(1+iz)}} \mathbf{f}^{\mp}(\cdot,-z) + \frac{S^\mp(z)}{\frac{2iz}{\Gamma(1-iz)\Gamma(1+iz)}}\mathbf{f}^\mp(\cdot,z), ~z \in \C \backslash i\Z.
\]
Coefficients $A$, $B$, $C$ and $D$ are defined as the coordinates in these basis, \textit{i.e.} 
\[
A(z):= \frac{W(z)}{\frac{2iz}{\Gamma(1-iz)\Gamma(1+iz)}} ~\text{and}~ B(z):= \frac{S^-(z)}{\frac{2iz}{\Gamma(1-iz)\Gamma(1+iz)}},
\]
\[D(z):= \frac{W(z)}{\frac{2iz}{\Gamma(1-iz)\Gamma(1+iz)}} ~\text{and}~ C(z):= \frac{S^+(z)}{\frac{2iz}{\Gamma(1-iz)\Gamma(1+iz)}}.
\]
Finally, we can define, always for $z \in \C \backslash i\Z$, the normalised transmission coefficient $T(z):= \frac{1}{A(z)} = \frac{2iz}{W(z)\Gamma(1-iz)\Gamma(1+iz)}$, the two normalised reflection coefficients $R^-(z):= \frac{B(z)}{A(z)} = \frac{S^-(z)}{W(z)}$ and $R^+(z):= \frac{C(z)}{A(z)} = \frac{S^+(z)}{W(z)}$, and finally the normalised scattering matrix $S(z)$. We remark here that $T(z) \Gamma(1-iz)^2 = \mathcal{T}(z)$ and 
\begin{equation}
R^\pm(z) \frac{\Gamma(1+iz)}{\Gamma
(1-iz)}= \mathcal{R}^\pm(z).
\label{R+}
\end{equation}
\subsection{An inverse resonance problem}
In this Section, we solve an inverse resonance problem for the class of compactly supported perturbation of a Pöschl-Teller potential introduced in Section 3.
\underline{From now, we assume that $q$ is real.}
We recall that $q$ is an integrable and compactly supported function, with its support satisfying $\supp{q} \subset [\alpha,\beta]$.
First, we start with a lemma on normalised Jost functions.
\begin{lemme}
    The functions $W$ and $S^\pm$ are entire and their growth order is at most one. Moreover, they satisfy:
    \begin{enumerate}
        \item $\overline{W(z)} = W(-\overline{z})$, $\overline{S^{\pm}(z)} = S^{\pm}(-\overline{z})$;
        \item $S^{-}(z) = S^{+}(-z)$;
        \item For $z \in \R^*$: $|A(z)|^2 = 1 + |B(z)|^2$;
        \item $W(z)W(-z) - \dfrac{4z^2}{\Gamma(1-iz)^2\Gamma(1+iz)^2} = S^{\pm}(z)S^{\pm}(-z)$;
         \item $W(0) = -S^{\pm}(0)$.
    \end{enumerate}
    \label{Lemme}
\end{lemme}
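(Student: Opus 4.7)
The plan is to prove the six items in order, with the entirety and growth estimate being the main technical point and the remainder reducing to algebraic Wronskian manipulations or to the reality of $V$. First I would observe that the normalised hypergeometric function $\mathbb{F}(a,b,c;\zeta)$ is entire in its parameters, hence $\mathbf{f}^\pm_0(x,z)$ is entire in $z$. Since $K^\pm(x,\cdot)$ is continuous, compactly supported, and $z$-independent, the integral representation of $\mathbf{f}^\pm$ preserves entirety, and $W$ and $S^\pm$, being Wronskians of entire functions evaluated at $\pm z$, are entire. For the growth order, I would write $\mathbf{f}^\pm_0(x,z) = f^\pm_0(x,z)/\Gamma(1-iz)$ and bound $f^\pm_0$ uniformly on the compact interval of integration using classical hypergeometric estimates together with Stirling's formula for $1/\Gamma(1-iz)$, to conclude that $\mathbf{f}^\pm(x,z)$ has order at most one in $z$; the Wronskian inherits this order.

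For items 2, 3, and 6 I would argue as follows. Reality of $V$ implies that $\overline{f^\pm(x,z)}$ solves $-y''+Vy=(-\overline{z})^2 y$ with asymptotic $e^{\pm i(-\overline{z})x}$ at $\pm\infty$, so uniqueness of the Jost solutions forces $\overline{f^\pm(x,z)} = f^\pm(x,-\overline{z})$; since $\overline{\Gamma(1-iz)} = \Gamma(1-i(-\overline{z}))$, this transfers to $\overline{\mathbf{f}^\pm(x,z)} = \mathbf{f}^\pm(x,-\overline{z})$, and taking Wronskians yields $\overline{W(z)} = W(-\overline{z})$ and $\overline{S^\pm(z)} = S^\pm(-\overline{z})$. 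Item 3 is immediate from the definition: $S^+(-z) = [\mathbf{f}^+(x,z),\mathbf{f}^-(x,-z)] = S^-(z)$. Item 6 is also direct, since at $z=0$ the distinction $\mp z$ versus $\pm z$ disappears, giving $S^\pm(0) = [\mathbf{f}^+(x,0),\mathbf{f}^-(x,0)] = -W(0)$.

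For items 4 and 5 I would exploit the expansion provided just before the lemma,
\[
\mathbf{f}^+(x,z) = A(z)\mathbf{f}^-(x,-z) + B(z)\mathbf{f}^-(x,z),
\]
together with its counterpart obtained by $z\mapsto -z$. Writing $\kappa(z) := 2iz/(\Gamma(1-iz)\Gamma(1+iz))$ and using the given identities $[\mathbf{f}^\pm(x,z),\mathbf{f}^\pm(x,-z)] = \mp\kappa(z)$, a bilinear computation of $[\mathbf{f}^+(x,z),\mathbf{f}^+(x,-z)]$ yields $A(z)A(-z)-B(z)B(-z)=1$; the same computation started from the analogous expansion of $\mathbf{f}^-$ gives $A(z)A(-z)-C(z)C(-z)=1$. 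Multiplying through by $\kappa(z)\kappa(-z) = 4z^2/(\Gamma(1-iz)^2\Gamma(1+iz)^2)$ delivers item 5 simultaneously for $S^+$ and $S^-$. Restricting to real $z$ and invoking item 2 to identify $A(-z)=\overline{A(z)}$ and $B(-z)=\overline{B(z)}$ turns the algebraic identity into $|A(z)|^2 = 1+|B(z)|^2$, which is item 4. The genuine obstacle in the whole argument is the growth order claim: every other item is a short algebraic or symmetry computation, but controlling the order-one growth of $\mathbf{f}^\pm_0$ requires quantitative asymptotics for $\mathbb{F}(a,b,c;\zeta)$ when the parameters grow linearly in $z$ and $\zeta\in(0,1)$, which is where one must appeal to more substantial estimates from the hypergeometric literature.
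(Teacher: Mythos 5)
Your proposal is correct and follows essentially the same route as the paper: the paper writes out only the third item, via exactly your bilinear expansion of $[\mathbf{f}^+(x,z),\mathbf{f}^+(x,-z)]$ in the basis $(\mathbf{f}^-(\cdot,-z),\mathbf{f}^-(\cdot,z))$ combined with $A(-z)=\overline{A(z)}$, $B(-z)=\overline{B(z)}$ deduced from the first two items, and delegates the remaining claims (including the entirety and order-one statement, which you likewise only sketch) to Bledsoe's Lemma~2.1, where they are established by the same symmetry and Wronskian arguments you describe. The only cosmetic issue is that your internal renumbering of the items does not match the lemma's enumeration, which is immaterial to the mathematics.
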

\begin{proof}
We start this proof by saying that $\frac{1}{\Gamma(c)}F(c-a,c-b,c;\zeta)$ for $\zeta \in (0,1)$ is an entire function of growth order at most one with respect to the complex parameter $z$ and uniformly with respect to the variable $\zeta$. We refer to Lemma \ref{LemmeEntireFunction} in the Appendix for a proof of this result.
Then, using \eqref{fb+} and \eqref{fb-}, and estimates \eqref{estimation} and \eqref{Deriv}, we prove that the function $\mathbf{f}^\pm$ and $(\mathbf{f}^\pm)'$ are entire and their growth order is at most one.
The proof of the other items of this lemma is similar to \cite[Lemma 2.1]{Bledsoe}, except for the third point. So we give a short proof. For a such $z$, a straightforward calculation shows:
    \[
    \begin{split}
        -& \frac{2iz}{\Gamma(1-iz)\Gamma(1+iz)} \\
        & = [\mathbf{f}^+(x,z),\mathbf{f}^+(x,-z)] \\
        & = [A(z)\mathbf{f}^-(x,-z)+B(z)\mathbf{f}^-(x,z),A(-z)\mathbf{f}^-(x,z)+B(-z)\mathbf{f}^-(x,-z)] \\
        & = A(z)A(-z)[\mathbf{f}^-(x,-z),\mathbf{f}^-(x,z)]+B(z)B(-z)[\mathbf{f}^-(x,z),\mathbf{f}^-(x,-z)] \\
        & = -\frac{2iz}{\Gamma(1-iz)\Gamma(1+iz)} \left( |A(z)|^2-|B(z)|^2 \right),
    \end{split}
    \]
since $A(-z) = \overline{A(z)}$, $B(-z) = \overline{B(z)}$ are consequences of the first two items.
\end{proof}
The first item of this Lemma \ref{Lemme} shows that the resonances must be symmetric with respect to the imaginary axis. If the potential were not real-valued, this property would not be true. 
The first, the fourth and the fifth points of Lemma \ref{Lemme} show that 0 is at most a simple zero of $W$. Furthermore, the second and the third points justify that $\mathcal{S}$ is a unitary matrix.
Since these functions are entire of growth order at most one, Hadamard's factorisation theorem gives the next expressions. Consider $\{ W_n \in \C ~|~ 0<|W_1| \leq |W_2| \leq \dots \}$ the set of non-vanishing zeros of $W$ listed according to multiplicity. We would like to characterise the elements of this set:
\begin{lemme}
    The zeros of $W$ are exactly the zeros of $w$ and the complex numbers $-ik_0$, for $k_0 \in \N$ and such that $(ik_0)^2$ is an eigenvalue.
    \label{ZeroW}
\end{lemme}
\begin{proof}
    We know from Formula \eqref{WS} that, for $z \in \C \backslash (-i\N)$,
    \[
    W(z) = \frac{w(z)}{\Gamma(1-iz)^2}.
    \]
    We straightly deduce that, on $\C \backslash (-i\N)$, the zeros of $W$ are the zeros of $w$. 
    Assume now that 
    $z_0 \in -i\N$ and $W(z_0)=0$. Note that for $z \in \C$, $$[\mathbf{f}^\pm(x,z),\mathbf{f}^\pm(x,-z)] = \frac{\mp 2iz}{\Gamma(1-iz)\Gamma(1+iz)}.$$ 
    Therefore the two normalised Jost solutions are colinear for $z = z_0$. This implies the existence of a complex constant $C$ such 
    that $W(z_0) = C W(-z_0)$. Then $-z_0$
    is also a zero of $W$ in $\C \backslash (-i\N)$, and thus of $w$. Therefore, $z_0^2$ is an eigenvalue.
    Conversely, if $(ik_0)^2$ is an eigenvalue, $k_0 \in \N$,
    the previous relation gives that $-ik_0$ is also a zero of $W$. Finally, we proved our result.
\end{proof}
If $-ik_0$ for $k_0 \in \N$ is a zero of $W$, then $ik_0$ is a zero of both $w$ and $W$, and reciprocally.
The knowledge of the eigenvalues and resonances is equivalent to the knowledge of the zeros of $W$. Then, by Hadamard's factorisation theorem we have
\begin{equation}
W(z) = z^m e^{a_0+a_1z} \prod_{n=1}^{\pinf} \left( 1 - \frac{z}{W_n} \right) e^{z/W_n},
\label{HdamardW}
\end{equation}
where $a_0,a_1 \in \C$ and $m \in \{0,1\}$.
\begin{lemme}
    The function $w$ satisfies the following universal asymptotic:
    \[
    w(z) \sim 2iz, ~z \to \infty, ~\Im(z) \geq 0.
    \]
    Similarly, we have also a universal asymptotic for $W$:
    \begin{equation}
    W(z) \sim \frac{2iz}{\Gamma(1-iz)^2},~ z \to \infty,~ \Im(z) \geq 0.
    \label{EquivW}
\end{equation}
\label{LemmeEquivW}
\end{lemme}
\begin{proof}
Since the poles of $w$ are in the lower half-plane, it is enough to consider an asymptotic for $w$. We are going to give another expression for $w$. Using the definition of $w$, \eqref{Eq:Jost+} and \eqref{Eq:Jost-}, we can prove that, for $x\geq \beta$ and $z \in \C \backslash i\Z^*$,
\begin{align}
w(z) & = 2iz\frac{\Gamma(c-1)\Gamma(c)}{\Gamma(a)\Gamma(b)} + F(c-a,c-b,c;1/(1+e^{2x})) \nonumber \\
& \times \Bigg[ \int_{2\alpha-x}^x  e^{-iz(t-x)} \left(\left( \partial_t - \partial_x \right) K^-(x,t)\right) F(c-a,c-b,c;e^{2t}/(1+e^{2t}))\, \mathrm{d}t \nonumber \\
& + \int_{2\alpha-x}^x  e^{-iz(t-x)} K^-(x,t) \partial_t F(c-a,c-b,c;e^{2t}/(1+e^{2t}))\, \mathrm{d}t \nonumber  \\
&  -2K^-(x,x)F(c-a,c-b,c;e^{2x}/(1+e^{2x})) \Bigg] \nonumber \\
& + \int_{2\alpha - x}^{x} K^-(x,t)  e^{-iz(t-x)} F(c-a,c-b,c;e^{2t}/(1+e^{2t}))  \, \mathrm{d}t \nonumber \\
& \times -\frac{2e^{2x}(c-a)(c-b)}{(1+e^{2x})^2c} F(c-a+1,c-b+1,c+1;1/(1+e^{2x})).
\label{NewW}
\end{align}
Using Stirling's formula:
\[
\Gamma(z) \sim (2\pi z)^{\frac{1}{2}} e^{-z} z^z, |\Im(z)| \leq \pi - \varepsilon,~ \varepsilon>0,~ z \to \infty,
\]
we have that $\frac{\Gamma(c-1)\Gamma(c)}{\Gamma(a)\Gamma(b)}$ tends to 1 as $z$ tends to infinity in the upper half-plane.
Now it is enough to prove that the four other terms of the sum are bounded as $|z|$ tends to infinity in the upper half-plane in order to conclude. For the integral of the fourth term, we have:
\[
\begin{split}
    & \int_{2\alpha - x}^{x} K^-(x,t)  e^{-iz(t-x)} F(c-a,c-b,c;e^{2t}/(1+e^{2t})) \, \mathrm{d}t \\
\end{split}
\]
Due to the fact that here $\Im(z) \geq 0$, we can apply Proposition \ref{Prop1} of the Appendix to obtain an upper-bound for the hypergeometric function which is uniform with respect to the parameter $\frac{e^{2t}}{1+e^{2t}}$. 
Hence using Estimates \eqref{estimation}, Proposition \ref{Prop1} and the triangle inequality prove that the above integral
is bounded as $z$ tends to $\pinf$ in the upper-half plane.
Hence, the last term in \eqref{NewW} is bounded
Note that
\begin{align*}
& \partial_t F(c-a,c-b,c;e^{2t}/(1+e^{2t})) \\
& = \frac{2e^{2t}}{(1+e^{2t})^2} \frac{(c-a)(c-b)}{c}F(c-a+1,c-b+1,c+1;e^{2t}/(1+e^{2t})),
\end{align*}
and using \eqref{t-x}, with $L = K^-$, we have that $\left( \partial_t - \partial_x \right) K^-$ is bounded, so the same proof holds for the remaining terms. 
\end{proof}
We now use the universal asymptotics given in Lemma \ref{LemmeEquivW} to show that the Jost function $W$ is uniquely determined by its zeros, or equivalently the poles of one of the reflection coefficients $R^+$ or $R^-$.
\begin{prop}
    The map between the set of the normalised Jost functions $W$ associated with a potential $V(x) = \frac{\lambda}{\cosh^2(x)}+q(x)$ such that $q$ is a real-valued, integrable and compactly supported function and the poles of the reflection coefficients is one-to-one.
    \label{PropW}
\end{prop}
\begin{proof}
    Let $\Tilde{V}$ be a potential defined by $\Tilde{V}(x) = \frac{\lambda}{\cosh^2(x)}+\Tilde{q}(x)$ and $\tilde{W}$ the normalised Jost function associated to this potential. We assume that $W$ and $\tilde{W}$ share the same zeros. Furthermore, as we did for $W$ in \eqref{HdamardW}, we can use the Hadamard's factorisation theorem to obtain the following expression for $\Tilde{W}$:
    \begin{equation}
        \Tilde{W}(z) = z^{\Tilde{m}}e^{\Tilde{a}_0+\Tilde{a}_1z}\prod_{n=1}^{\pinf} \left( 1 - \frac{z}{W_n} \right) e^{z/W_n},
        \label{HadamardWtilde}
    \end{equation}
    where $\Tilde{a}_0,\Tilde{a}_1 \in \C$ and $\Tilde{m} \in \{0,1\}$. Then, due to \eqref{HdamardW} and \eqref{HadamardWtilde},
    \[
    \begin{split}
    \frac{W(z)}{\Tilde{W}(z)} & = \frac{z^me^{a_0+a_1z}\prod_{n=1}^{\pinf} \left( 1 - \frac{z}{W_n} \right) e^{z/W_n}}{z^{\Tilde{m}}e^{\Tilde{a}_0+\Tilde{a}_1z}\prod_{n=1}^{\pinf} \left( 1 - \frac{z}{W_n} \right) e^{z/W_n}} \\
    & = z^{m-\Tilde{m}} e^{a_0-\Tilde{a}_0}e^{(a_1-\Tilde{a}_1)z}.
    \end{split}
    \]
    Since \eqref{EquivW} entails that $\frac{W(z)}{\Tilde{W}(z)} \to 1$ as $z \to \infty$ with $\Im(z) \geq 0$, we necessarily have $m=\Tilde{m}$, $a_0=\Tilde{a}_0$, $a_1=\Tilde{a}_1$, and so $W = \Tilde{W}$.
\end{proof}
As we can see, no further assumptions on the potential than $q$ being integrable, real and compactly supported are required to conclude on the uniqueness of the normalised Jost function $W$. We now consider $S^\pm$. As we did with $W$ in \eqref{HdamardW}, since they are also entire functions of growth order at most one, we can use the Hadamard's factorisation theorem and so write
\begin{equation}
    S^{\pm}(z) = (\mp1)^lz^le^{b_0\mp b_1z}\prod_{n=1}^{\pinf} \left( 1 \pm \frac{z}{S_n} \right) e^{\mp z/S_n},
    \label{S+-}
\end{equation}
where $b_0, b_1 \in \C$, $l \geq m$. The $S_n$, $n \in \N$, are the non-vanishing zeros of $S^-$ such that $0<|S_1| \leq |S_2| \leq \cdots$, listed according to multiplicity. The situation is the same as before with $W$, and we want now to characterize the elements $S_n$.
\begin{lemme}
    The set of zeros of $S^-$ is composed of the zeros of $s^-$ and the complex numbers $\pm i k_0$ for $k_0 \in \N$ and such that $(ik_0)^2$ is an eigenvalue.
\end{lemme}
\begin{proof}
    The proof is similar to the proof of Lemma \ref{ZeroW}.
    Firstly from \eqref{WS} we know that, for $z \in \C \backslash i\Z^*$, 
    \[
    S^-(z) = \frac{s^-(z)}{\Gamma(1-iz)\Gamma(1+iz)}.
    \]
    We deduce that on $\C \backslash (i\Z^*)$, the zeros of $S^-$ are the zeros of $s^-$. Assume now that $z_0 \in i\Z^*$ and $S^-(z_0) = 0$. As we did in the proof of Lemma $\ref{ZeroW}$, the two normalised Jost solutions are colinear for $z = z_0$. This implies the existence of two complex constants $C_1$ and $C_2$ such that $S^-(z_0) = C_1 W(z_0) = C_2 W(-z_0)$. Then, $\pm z_0$ are also zeros of $W$. Furthermore, one of these two zeros is in $i\N$, and so the square of this zero is an eigenvalue. Conversely, if $(ik_0)^2$ is an eigenvalue, with $k_0 \in \N$, the two previous relations show that $\pm ik_0$ is also a zero of $S^-$, which ends the proof.
\end{proof}
Similarly to $W$, we have that the knowledge of eigenvalues and zeros of $s^-$ is equivalent to the knowledge of the zeros of $S^-$.
Now, using Lemma \ref{Lemme}, we have
\[
W(z)W(-z) - \frac{4z^2}{\Gamma(1-iz)^2\Gamma(1+iz)^2} = S^\pm(z)S^\pm(-z).
\]
Evaluating this identity at $z=0$, we obtain that the right-hand side has a zero of order $2l$. The left-hand side is perfectly known, so $l$ is uniquely determined. To obtain the quantities $e^{b_0}$ and $b_1$, we have, as we previously did for the Jost function $w$, to give another expression of $s^+$. 
Let $\Tilde{c}:= 1+iz$, and similarly $\Tilde{a} := \frac{1}{2}+iz+\sqrt{\frac{1}{4}-\lambda}$ and $\Tilde{b}:=\frac{1}{2}+iz-\sqrt{\frac{1}{4}-\lambda}$. If $x \geq \beta$, then using \eqref{Eq:Jost+} and \eqref{Eq:Jost-}:
\begin{align}
s^+(z)
& = 2\frac{\Gamma(1-iz)\Gamma(1+iz)}{\Gamma(c-a)\Gamma(c-b)} \nonumber \\
& +\Bigg[\int_{2 \alpha - x }^x e^{-iz(t+x)} \left( \partial_x + \partial_t \right) K^-(x,t) F\left(c-a,c-b,c;\frac{e^{2t}}{1+e^{2t}}\right) \, \mathrm{d}t \nonumber \\
& + \int_{2 \alpha - x }^x e^{-iz(t+x)} K^-(x,t) \partial_t F\left(c-a,c-b,c;\frac{e^{2t}}{1+e^{2t}}\right) \, \mathrm{d}t \Bigg] \nonumber \\
& \times F(\Tilde{c}-\Tilde{a},\Tilde{c}-\Tilde{b},\Tilde{c};\frac{1}{1+e^{2x}}) \nonumber \\
 & + \frac{2e^{2x}}{(1+e^{2x})^2} \frac{(\Tilde{c}-\Tilde{a})(\Tilde{c}-\Tilde{b})}{\Tilde{c}} \times F(\Tilde{c}-\Tilde{a}+1,\Tilde{c}-\Tilde{b}+1,\Tilde{c}+1;\frac{1}{1+e^{2x}}) \nonumber \\
 & \times \int_{2 \alpha - x }^x e^{-iz(t+x)} K^-(x,t) F\left(c-a+1,c-b+1,c+1;\frac{e^{2t}}{1+e^{2t}}\right) \, \mathrm{d}t.
   \label{s}
\end{align}
This new writing of $s^+$ allows us to find its asymptotics under an additional assumption on $q$.
\begin{lemme}
    Let $V$ be a potential such that $V(x) = \frac{\lambda}{\cosh^2(x)}+q(x)$. Assume that $\supp{q} \subset [\alpha,\beta] \subset \R_-$. Let $z_n = i(\frac{4n+1}{2})$, for every integer n. Then:
    \[
    S^+(z_n) \equ{n \to \infty} \frac{2}{\Gamma(c-a)\Gamma(c-b)}.
    \]
    \label{Lemme3}
\end{lemme}
\begin{proof}
Since $z_n$ is not in $i\N$ for $n \in \N$, it is enough to prove that 
\[
 s^{+}(z_n) \equ{n \to \infty} 2\frac{\Gamma(1-iz_n)\Gamma(1+iz_n)}{\Gamma(c-a)\Gamma(c-b)}
\]
in order to conclude. To obtain this asymptotic, we use the representation of $s^+$ given in \eqref{s} with $x = \beta$. Note that $x \leq 0$ and Euler's reflection formula gives
\[
\Gamma(1-iz_n)\Gamma(1+iz_n) = \Gamma\left(1+\frac{4n+1}{2}\right)\Gamma\left(1-\frac{4n+1}{2}\right) = \frac{(4n+1)\pi}{2}.
\]
The rest of the proof is similar to the proof of Lemma \ref{LemmeEquivW}, using Proposition \ref{Prop1} to prove that the integral terms are bounded.
.
\end{proof}
This asymptotic allows us to obtain the following proposition.
\begin{prop}
    The map between the set of the normalised Jost functions $S^-$ associated with a potential $V(x) = \frac{\lambda}{\cosh^2(x)}+q(x)$ such that $q$ is a real-valued, integrable and compactly supported function whose support satisfies $\supp{q} \subset [\alpha,\beta] \subset \R_-$ and the set of zeros of one of the reflection coefficients in one-to-one.
    \label{PropS}
\end{prop}
\begin{proof}
The proof is identical to the one of Proposition \ref{PropW}, using Lemma \ref{Lemme3} and \eqref{S+-}. 
\end{proof}
We can now state an inverse-problem result.
\begin{theo}
    The map between the real-valued, integrable and compactly supported perturbation of a Pöschl-Teller potential $q$, with a support satisfying $\supp{q} \subset [\alpha,\beta] \subset \R_-$ (respectively in $\R_+$) and the set of zeros and poles of one of its reflection coefficients is one-to-one.
    \label{Theorem}
\end{theo}
\begin{rem}
Due to the second item of Lemma \ref{Lemme}, the zeros of $S^-$ are exactly the opposite of the ones of $S^+$. This means that if we know the zeros of the right reflection coefficient, we also know the zeros of the left reflection coefficient, and vice versa.
\end{rem}
\begin{proof}
Using symmetry with respect to the origin, it is enough to do the proof in the case $\supp{q} \subset \R_-$.
Therefore, under this assumption, Propositions \ref{PropW} and \ref{PropS} prove that Jost functions are 
fully determined by their zeros. So reflection coefficients $R^\pm$ are determined by their poles and zeros. 
According to Gelfand-Levitan-Marchenko theory, 
the knowledge of the zeros, poles of the reflection 
coefficients and the norming constant determines uniquely the potential. 
We refer to the book of Marchenko \cite[Chapter 3, Section 5]{Marchen} for a proof of the inverse problem on the line.
Following the idea of Hitrik \cite[Prop 3.3]{Hitrik} in the case of compactly supported perturbations, we introduce here some notations: let $K^+_0$ (respectively $K^+_q$) be the kernel of the transformation operator between $-\dxx$ and $H_0$ (resp. between $-\dxx$ and $H$) defined similarly as $K^+$. We recall that the operator $H_0$ (resp. $H$) are defined at \eqref{H0} (resp. \eqref{H}). Then, we can write the two Gelfand-Levitan-Marchenko equations:
\begin{equation}
K^+_0(x,y) + Q_0(x+y) + \int_{x}^{\pinf} K^+_0(x,t)Q_0(y+t) \, \mathrm{d}t = 0, ~ x < y,
\label{GLF0}
\end{equation}
and
\begin{equation}
K^+_q(x,y) + Q_q(x+y) + \int_{x}^{\pinf} K^+_q(x,t)Q_q(y+t) \, \mathrm{d}t = 0, ~ x < y,
\label{GLFq}
\end{equation}
where $Q_0$ is defined by 
\[
Q_0(x) = \frac{1}{2\pi} \int_{\minf}^{\pinf} e^{ikx} R^+_0(k) \, \mathrm{d}k+ \sum_{j=1}^{N_0} c_j^0 e^{-\beta_j^0 x},
\]
where the $c_j^0$ for $j \in \{1,\dots,N_0\}$ are the norming constants and the $\beta_j^0 > 0$ for $j \in \{1,\dots,N_0\}$ are such that $-(\beta_j^0)^2$ are the eigenvalues of the operator $H_0$.
Similarly
\[
Q_q(x) = \frac{1}{2\pi} \int_{\minf}^{\pinf} e^{ikx} R^+(k) \, \mathrm{d}k + \sum_{j=1}^N c_j e^{-\beta_j x},
\]
where the $c_j$ for $j \in \{1,\dots,N\}$ are the norming constants and the $\beta_j > 0$ for $j \in \{1,\dots,N\}$ are such that $-\beta_j^2$ are the eigenvalues of the operator $H$.
Due to the transformation operator theory (see \cite[Chapter 1]{Levitan}), we have:
\[
K_q^+(x,y) = K_0^+(x,y) + K^+(x,y) + \int_x^y K^+(x,s) K_0^+(s,y) \, \mathrm{d}s.
\]
As, for $x>\beta$, $K^+(x,y) = 0$, after subtracting \eqref{GLF0} from \eqref{GLFq}, we have
\begin{equation}
D_q(x+y) + \int_x^{\pinf} K_0^+(x,t)D_q(y+t) \, \mathrm{d}t = 0,~ \beta < x<y,
\label{Volt}
\end{equation}
where $D_q(x) := Q_q(x) - Q_0(x)$.
Equation \eqref{Volt} is a Volterra type equation, due to the fact that the kernel $K_0^+$ is integrable at $\pinf$. We refer to \cite[Chapter 1]{Levitan} for a proof of this result.
Then, we straightly obtain from \eqref{Volt} that $D_q(x) = 0$ for $x>2\beta$. 
The quantity $Q_0$ is known, then using the fact that $D_q(x)=0$ for $x>2\beta$, we have
\[
\sum_{j=1}^N c_j e^{-\beta_jx} = Q_0(x) - \frac{1}{2\pi} \int_{\minf}^{\pinf} e^{ikx} R^+(k) \, \mathrm{d}k,
\]
and so the norming constant $c_j$ for $j \in \{1,\dots,N\}$ are uniquely determined.
In conclusion, the knowledge of zeros and poles of one of the reflection coefficients uniquely determines the potential $q$.
\end{proof}
In the case of a potential $V$ such that $q$ has a support that contains zero, we are going to use other assumptions: we assume now that there is no half-bound state, \textit{i.e.} $W(0)\neq 0$ or there exists an eigenvalue in $\R^- \backslash (-\N)$. 
According to the work of Bledsoe \cite[Proposition 2.3 and Lemma 2.4]{Bledsoe},
we have the two following lemmas.
\begin{lemme}
    The complex number $b_1$ in \eqref{S+-} is an imaginary number.
    \label{5}
\end{lemme}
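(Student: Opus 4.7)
My plan is to combine the conjugation symmetry $\overline{S^{-}(z)} = S^{-}(-\overline z)$ from Lemma~\ref{Lemme} with the Hadamard factorisation \eqref{S+-} of $S^{-}$, evaluated along the imaginary axis. First I would observe that, for $z = it$ with $t \in \R$, one has $-\overline z = it = z$, so the symmetry immediately gives $S^{-}(it) \in \R$ for every real $t$. The same symmetry also forces the zero set $\{S_n\}$ of $S^{-}$ to be invariant under the involution $\sigma \mapsto -\overline\sigma$.

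Next I would rearrange the Hadamard product by pairing its zeros according to this involution: purely imaginary zeros $S_n = i\sigma_n$ with $\sigma_n \in \R$ are fixed, while the remaining zeros split into pairs $(S_n,-\overline{S_n})$. A short computation shows that each self-paired factor, and each pair
$$
\bigl(1 - it/S_n\bigr)\bigl(1 + it/\overline{S_n}\bigr)\, e^{it(1/S_n - 1/\overline{S_n})} \;=\; \bigl|1 - it/S_n\bigr|^{2}\, e^{-2t\,\Im(1/S_n)},
$$
is a real quantity. After this grouping the factorisation \eqref{S+-} reads
$$
S^{-}(it) \;=\; i^{l}\, t^{l}\, e^{b_0}\, e^{i b_1 t}\, \Pi(t), \qquad \Pi(t) \in \R.
$$

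The conclusion then follows quickly: writing $b_1 = \gamma + i\delta$ with $\gamma,\delta \in \R$, the identity above together with $S^{-}(it) \in \R$ forces, on any open interval where $t^{l}\Pi(t) \neq 0$, the argument of $i^{l}e^{b_0}e^{i b_1 t} = i^{l}e^{b_0}e^{-\delta t}e^{i\gamma t}$ to remain constant modulo $\pi$. By continuity in $t$ this imposes $\gamma = 0$, that is $b_1 \in i\R$, as required.

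The main technical issue will be the rearrangement of the infinite Weierstrass product under the symmetry pairing. This is legitimate since \eqref{S+-} is a product of genus one: the series $\sum_{n} 1/|S_n|^{2}$ converges and the product is locally uniformly and absolutely convergent, so grouping terms according to $\sigma \mapsto -\overline\sigma$ does not alter its value. Once this rearrangement is justified, the rest of the argument is elementary.
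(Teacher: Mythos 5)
Your argument is correct, and it is worth pointing out that the paper itself does not prove this lemma: it simply imports it from Bledsoe (Proposition 2.3 there), so what you have written is a genuine self-contained derivation rather than a reproduction of the text. The two ingredients you use are the right ones: item 1 of Lemma \ref{Lemme} gives $S^-(it)\in\R$ for $t\in\R$ (since $-\overline{it}=it$) and forces the zero set to be invariant under $\sigma\mapsto-\overline{\sigma}$, and your pairing computation
\[
\bigl(1 - it/S_n\bigr)\bigl(1 + it/\overline{S_n}\bigr)\, e^{it(1/S_n - 1/\overline{S_n})} = \bigl|1 - it/S_n\bigr|^{2}\, e^{-2t\,\Im(1/S_n)}
\]
is accurate, as is the observation that the fixed factors coming from purely imaginary zeros are real. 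You are also right that the regrouping needs justification and that it is harmless here: $S^-$ has order at most one, so $\sum_n |S_n|^{-2}<\infty$ and the genus-one canonical product converges absolutely and locally uniformly, which licenses the symmetric grouping. If you want to bypass the rearrangement entirely, a slightly slicker variant is to apply the identity $S^-(z)=\overline{S^-(-\overline z)}$ to the whole factorisation \eqref{S+-}: the right-hand side is again a Hadamard factorisation of $S^-$ with linear exponent $-\overline{b_1}z$ and with zero set $\{-\overline{S_n}\}=\{S_n\}$, so uniqueness of the Hadamard data gives $b_1=-\overline{b_1}$, i.e.\ $\Re(b_1)=0$, directly. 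Your final step (an affine argument function constrained to take values in $\pi\Z$ on an interval must have zero slope) is fine, provided one notes that $S^-\not\equiv 0$ so that $t\mapsto t^l\Pi(t)$ has only isolated zeros --- which is implicit in writing \eqref{S+-} at all.
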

\begin{lemme}
    Let $V$ and $\Tilde{V}$ be two potentials defined as before, \textit{i.e.} $V(x) = \frac{\lambda}{\cosh^2(x)}+q(x)$ and $\tilde{V}(x) = \frac{\lambda}{\cosh^2(x)}+\tilde{q}(x)$. Let $\Tilde{W}$ and $\Tilde{S}^\pm$ be the Jost functions associated to the potential $\tilde{q}$. If $W = \Tilde{W}$ and $S^\pm(z) = e^{\mp 2i\alpha z}\Tilde{S}^\pm(z)$, then $\Tilde{V}(x) = V(x-\alpha)$ for every real number $x$ and for $\alpha$ a real number.
    \label{lemmeshift}
\end{lemme}
\begin{rem}
There is a misprint in the statement of Lemma \ref{lemmeshift} in \cite{Bledsoe}. It is written $S^\pm(z) = e^{\pm 2i\alpha z}\Tilde{S}^\pm(z)$. 
\end{rem}
Now, we can state another inverse problem theorem.
\begin{theo}
   The map between the real-valued, integrable and compactly perturbations of a Pöschl-Teller potential, without half-bound state or with an eigenvalue in $\R^- \backslash (-\N)$ and the set of zeros and poles of one of its reflection coefficients is one-to-one.
    \label{Theorem2}
\end{theo}
\begin{proof}
First, assume that there is no half-bound state. In this case, recalling Lemma \ref{Lemme}, we have $W(0) = -S^\pm(0) \neq 0$ and then $l=0$ in \eqref{S+-}, so $e^{b_0}$ defined in \eqref{S+-} is fully determined.
Now, assume that there exists an eigenvalue $-k_0^2$ in $\R^- \backslash (-\N)$: we use the knowledge of both $W$ and
the link between norming constants and residues of reflection coefficients at $z = i k_0$, where $k_0 \in \R_+^* \backslash \N$ and $-k_0^2$ is this eigenvalue
, to establish that
\[
\int_{\minf}^{\pinf} |\mathbf{f}^\pm(x,i k_0)|^2 \, \mathrm{d}x = i \frac{S^\pm(i k_0) \dot W(i k_0)}{\frac{4 k_0^2}{\Gamma(1+k_0)^2\Gamma(1-k_0)^2}} > 0,
\]
where $\dot W$ denotes the derivative of $W$ with respect to the complex variable $z$. Then $e^{b_0}$ is also completely determined in this case. See \cite[Theorem 2.2]{Bledsoe} for further information.
    Using a proof by contradiction, we are going to prove that $b_1$ is also fully determined by the two previous assumptions. Assume that $b_1 \neq \Tilde{b}_1$. Due to Lemma \ref{5}, they are both imaginary numbers, so one can write them $b_1 = i\beta_1$ and $\Tilde{b}_1 = i \Tilde{\beta}_1$ with $\beta_1,\Tilde{\beta}_1 \in \R$. Let $\alpha:= \beta_1 - \Tilde{\beta}_1$ be a non-zero real number. Then we have
\begin{equation}
    S^\pm(z) = e^{\mp i\alpha z}\Tilde{S}^\pm(z).
    \label{shift}
\end{equation}
Using Lemma \ref{lemmeshift}, Equation \eqref{shift} and $W=\Tilde{W}$, we have for every real number $x$
\begin{equation}
    \Tilde{V}(x) = V\left(x-\frac{\alpha}{2}\right).
    \label{egalite}
\end{equation}
This result holds for every real $x$, so one can choose $x$ large enough, such that $x \notin \supp{q}$ and $x \notin \supp{\Tilde{q}}$. For such a real number $x$, \eqref{egalite} becomes
\[
    \frac{\lambda}{\cosh^2(x)} = \frac{\lambda}{\cosh^2(x-\frac{\alpha}{2})}.
\]
This is absurd, so we have a contradiction with the assumption that $\alpha$ is a non-zero real number.
Then, $S^+$ is uniquely determined, and we have the result of the theorem.
\end{proof}
Actually, what we have shown in the proof above is that if ${b_0} = {\tilde{b}_0}$,
then necessarily $b_1 = \tilde{b}_1$.
Finally, in the case there is a half-bound state and all the eigenvalues, if any, are in $-\N$, in order to be able to conclude, we must add a quantity to our scattering data. This point is mentioned in the work of Korotyaev \cite{Koro} for compactly supported potential. We now state the last uniqueness theorem of this work.
\begin{theo}
    The map between the real-valued, integrable and compactly supported perturbations of a Pöschl-Teller potential and the set of zeros, poles of one of its reflection coefficients and the quantity $p = sign(i^l e^{b_0})$ (defined in \eqref{S+-}) is one-to-one.
    \label{Theorem3}
\end{theo}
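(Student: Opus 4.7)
The plan is to reconstruct both Jost functions $W$ and $S^\pm$ from the given scattering data (the zeros and poles of one reflection coefficient together with $p$), so that $R^\pm = S^\pm/W$ is recovered on the whole plane; the Gelfand-Levitan-Marchenko theory then reconstructs $V$ (and hence $q$) uniquely, as already noted in the introduction. The function $W$ is entirely handled by the proposition preceding Theorem \ref{Theorem}, which combines Hadamard's factorisation with the universal asymptotic \eqref{EquivW}, so the whole task reduces to pinning down the two unknown constants $b_0$ and $b_1$ in the Hadamard product \eqref{S+-} for $S^\pm$.

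First I would determine the integer $l$ as half the order of vanishing at $z=0$ of the function
$$W(z)W(-z) - \frac{4z^2}{\Gamma(1-iz)^2\Gamma(1+iz)^2},$$
which, by item 4 of Lemma \ref{Lemme}, equals $S^\pm(z)S^\pm(-z)$ and therefore has a zero of order exactly $2l$ at the origin; the left-hand side is fully known since $W$ has already been recovered. Expanding the factorisation \eqref{S+-} gives
$$S^\pm(z)S^\pm(-z) = (-1)^l z^{2l} e^{2b_0} \prod_{n=1}^{\pinf}\left(1-\frac{z^2}{S_n^2}\right),$$
and the zeros $S_n$ (zeros of $R^\pm$ together with the known $\pm ik$ corresponding to eigenvalues) come from the data, so the infinite product is determined. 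Matching the leading Taylor coefficient at $z=0$ on both sides then isolates $e^{2b_0}$. Since $i^l e^{b_0}$ is real, the knowledge of $e^{2b_0}$ only leaves a sign ambiguity in $e^{b_0}$, and that ambiguity is precisely resolved by the additional datum $p = sign(i^l e^{b_0})$. This determines $e^{b_0}$ uniquely.

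It remains to show that $b_1$ is then forced, for which I would follow the contradiction argument used in the proof of Theorem \ref{Theorem2}. Suppose two admissible potentials $V$ and $\tilde V$ yield the same data and the same sign $p$; then $W=\tilde W$, $S_n=\tilde S_n$ and $e^{b_0}=e^{\tilde b_0}$, so that if $b_1\neq \tilde b_1$, Lemma \ref{5} gives $b_1-\tilde b_1=i\alpha$ with $\alpha\in\R^*$ and therefore $S^\pm(z) = e^{\mp i\alpha z}\tilde S^\pm(z)$. Combining this with $W=\tilde W$ and Lemma \ref{lemmeshift} yields $\tilde V(x) = V(x-\alpha/2)$, which, evaluated at any $x$ outside $\supp q \cup \supp \tilde q$, reduces to $\cosh^{-2}(x)=\cosh^{-2}(x-\alpha/2)$; this is absurd for $\alpha\neq 0$. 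Hence $b_1=\tilde b_1$, $R^\pm$ is completely determined, and Gelfand-Levitan-Marchenko closes the argument.

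The principal obstacle in this scheme is the careful sign bookkeeping in the middle step: one must verify that expanding \eqref{S+-} really produces the factor $(-1)^l e^{2b_0}$ and that no further phase sneaks in, since it is exactly this residual $\Z/2$ ambiguity that the single real scalar $p$ is expected to break. Once this is handled, the remaining pieces — the reconstruction of $W$, the reuse of the shift contradiction from Theorem \ref{Theorem2}, and the final Gelfand-Levitan-Marchenko step — follow the lines already laid out earlier in the paper.
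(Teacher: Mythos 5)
Your proposal is correct and follows essentially the same route as the paper: recover $W$ via Hadamard factorisation and the universal asymptotic, read off $l$ and $e^{2b_0}$ from the identity $S^\pm(z)S^\pm(-z)=W(z)W(-z)-4z^2/(\Gamma(1-iz)^2\Gamma(1+iz)^2)$, use $p=\mathrm{sign}(i^l e^{b_0})$ to break the residual sign ambiguity, and then force $b_1$ by the shift-contradiction argument of Theorem \ref{Theorem2} before invoking Gelfand--Levitan--Marchenko. Your explicit expansion $S^\pm(z)S^\pm(-z)=(-1)^l z^{2l}e^{2b_0}\prod_n(1-z^2/S_n^2)$ is the correct sign bookkeeping and simply makes precise the step the paper states more briefly.
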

\begin{proof}
We are able to determine, with Lemma \ref{Lemme}, as already mentioned, the quantity $e^{2b_0}$. Let $p:= sign(i^l e^{b_0})$, where $l$ and $b_0$ were defined in \eqref{S+-}. $p$ is well-defined because $i^l e^{b_0}$ is a real number. Suppose that $p$ is given, $i.e$ $p$ is in the scattering data set with the poles and zeros of the reflection coefficients. Then we can determine ${b_0}$. Following the proof of Theorem \ref{Theorem2}, using Lemma \ref{lemmeshift}, we then obtain ${b_1}$.
\end{proof}

One can see these three theorems as adjustments of results stated in the works of Bledsoe \cite{Bledsoe} and Korotyaev \cite{Koro}. 
They have shown similar uniqueness theorems considering a compact supported perturbation of the zero potential. 
The main difference with their works is that, due to the presence of a Pöschl-Teller potential, 
we 
have to
deal with more different cases. 
Due to the translation invariance of the support,
the conclusion of Theorem \ref{Theorem} cannot hold in their cases. These three theorems show that the normalised reflection coefficients of a potential $V$ such that $V(x) = \frac{\lambda}{\cosh^2(x)}+q(x)$ are completely determined under different assumptions. Then, due to the relation \eqref{R+} between $T$, $R^\pm$ and $\mathcal{T}$, $\mathcal{R}^\pm$, these quantities will also be fully determined.

\section{Location of the resonances at infinity for a compactly supported perturbation}

We know the resonances when $q=0$ (see Figure \ref{Dessin}). We would like to determine how these "initial" resonances behave when we perturb a Pöschl-Teller potential with a real-valued and integrable potential $q$ with compact support in $[\alpha,\beta]$. 

We mention here the works of Zworski \cite{Zworski,ZWORSKI1987277}, Bledsoe \cite{Bledsoe} and Stepin-Tarasov \cite{StepTara07, StepTara09} in which they gave asymptotics of resonances for compactly supported or super exponentially decreasing perturbations of the free operator.
Borthwick, Boussaïd, and Daudé \cite{BBD} consider the case of a compactly perturbed exponentially decreasing potential on the half-line.

We assume in this section that $q$ satisfies Assumption \ref{Asum}. We recall here that this assumption implies for $q$ to have a jump at $x=\beta$ for one of its derivative, and also a jump at $x=\alpha$ for another derivative.
Using the definition of the Wronskian $w$, we have for $z \in \C^- \backslash (-i\N)$.
\begin{align}
    w(z) & = [f^-((\alpha+\beta)/{2},z),f^+((\alpha+\beta)/{2},z)] \notag \\
    & = f^-((\alpha+\beta)/{2},z)(f^+)'((\alpha+\beta)/{2},z) \notag \\ 
    & -(f^-)'((\alpha+\beta)/{2},z)f^+((\alpha+\beta)/{2},z).
    \label{wsomme}
\end{align}
To study this function, we are going to split the domain $\C^-$ into two subdomains $S_1$ and $S_2$:
\begin{defi}
 We define a set $\mathcal{B}$ by
    \[
    \mathcal{B}:= \bigcup_{n=0}^{\pinf}B(-i(n+1),\rho),
    \]
    for $\rho>0$, and where $B(x,r)$ stands for the open ball of center $x$ and radius $r$.
    Then, let $\C^-:= \{ z \in \C ~|~  \Im(z) <0 \}$. We can define finally $S_1$ and $S_2$, two domains of the lower half-plane defined by
    \[
    S_1:= \left\{ z \in \C^- \backslash \mathcal{B} ~|~ - |\Re(z)| \geq \delta^{-1} \Im(z) \right\},
    \]
    and
    \[
    S_2:= \left\{ z \in \C^- ~|~ - |\Re(z)| < \delta^{-1} \Im(z) \right\},
    \]
    for $\delta >0$.
    \label{Def}
\end{defi}
\subsection{Location of resonances in $S_2$}
We start with the location of resonances on $S_2$.
We are going to give asymptotic expressions for each terms in the previous formula \eqref{wsomme}:
\begin{lemme} 
    Let $q$ be a real-valued, integrable and compactly supported potential such that $\supp{q} \subset [\alpha,\beta]$. We assume that $q$ satisfies Assumption \ref{Asum}.
    We have, for $z \in S_2$:
\begin{align}
& f^+((\alpha+\beta)/{2},z) = f^+_0((\alpha+\beta)/{2},z) +F\left(c-a,c-b,c;\frac{1}{1+e^{3\beta-\alpha}}\right) \times \nonumber \\
& \times(-1)^p \frac{\partial_t^pK^+(({\alpha+\beta})/{2},(3\beta-\alpha)/2)}{(iz)^{p+1}} e^{iz(3\beta-\alpha)/2} \left[ 1 + o(1) \right],
\label{f^+asymptotic}
\end{align}
and
\begin{align}
& (f^+)'((\alpha+\beta)/{2},z) \nonumber \\
& = iz f^+_0((\alpha+\beta)/{2},z) (1+ o(1)) + F\left(c-a,c-b,c;\frac{1}{1+e^{3\beta-\alpha}}\right) \times \nonumber \\
& \times (-1)^{p-1} \frac{\partial_t^{p-1}\partial_xK^+((\alpha+\beta)/{2},(3\beta-\alpha)/{2})}{(iz)^{p}}e^{iz(3\beta-\alpha)/2} \left[ 1+o(1) \right],
\label{f^+'asymptotic}
\end{align}
as $|z| \to \pinf$.
\label{Lemmef^+}
\end{lemme}

\begin{proof}
    The notations $o$ and $\sim$ hold for $|z|$ tending to $\pinf$ and $z \in S_2$. 
    We give the proof only for the first formula, due to the fact that the proof of the second asymptotic formula is similar.
    Since 
    \[
    f^+((\alpha+\beta)/2,z) = f^+_0((\alpha+\beta)/2,z) + \int_{\frac{\alpha+\beta}{2}}^{\frac{3\beta-\alpha}{2}} K^+((\alpha+\beta)/2,t) f^+_0(t,z) \, \mathrm{d}t,
    \]
    we have to modify the integral term to obtain the wanted formula.
    After $(p+1)$ integrations by parts, using item 4 of Proposition \ref{PropNoyau} to evaluate the kernel $K^+$ and its $k^{th}$ derivative at $t = 2\beta - x$ for $k \in \{0,1,\dots,p\}$, we have
\begin{align}
& \int_{\frac{\alpha+\beta}{2}}^{\frac{3\beta-\alpha}{2}} K^+((\alpha+\beta)/2,t) f^+_0(t,z) \, \mathrm{d}t \notag \\
& = \int_{\frac{\alpha+\beta}{2}}^{\frac{3\beta-\alpha}{2}} K^+((\alpha+\beta)/2,t) F\left(c-a,c-b,c; \frac{1}{1+e^{2t}} \right) e^{itz} \, \mathrm{d}t\notag \\
& = \sum_{k=0}^{p} \frac{(-1)^{k+1}}{(iz)^{k+1}}\partial_t^k \left.\left( K^+((\alpha+\beta)/2,t) F\left(c-a,c-b,c; \frac{1}{1+e^{2t}} \right)\right) \right|_{t=\frac{\alpha+\beta}{2}} \\
& + \left[ (-1)^p \frac{e^{itz}}{(iz)^{p+1}}\partial_t^p\left( K^+((\alpha+\beta)/2,t) F\left(c-a,c-b,c; \frac{1}{1+e^{2t}} \right)\right) \right]^\frac{3\beta-\alpha}{2}_{\frac{\alpha+\beta}{2}} \notag \\
& - \int_{\frac{\alpha+\beta}{2}}^{\frac{3\beta-\alpha}{2}} (-1)^p \partial_t^{p+1}\left[ K^+((\alpha+\beta)/2,t) F\left(c-a,c-b,c; \frac{1}{1+e^{2t}} \right)\right] \frac{e^{itz}}{(iz)^{p+1}} \, \mathrm{d}t \notag \\
& = (-1)^p \frac{\partial_t^pK^+((\alpha+\beta)/2,2\beta)}{(iz)^{p+1}} \notag \\
& \times F\left(c-a,c-b,c;\frac{1}{1+e^{3\beta-\alpha}}\right)e^{iz(3\beta-\alpha)/2} \left[ 1+o(1) \right].
\label{IPP}
\end{align}
Since we assume that the $(p-1)^{th}$ derivative of $q$ has a jump at $x = \beta$, Equation \eqref{2beta} tells us that $\partial_t^pK^+((\alpha+\beta)/2,((3\beta-\alpha)/2)^-) = -\frac{1}{4} q^{(p-1)}(\beta^-) \neq 0$. 
The $o(1)$ corresponds, firstly, to the terms between bracket evaluated at $t=(\alpha+\beta)/2$, which are negligible with respect to the one with the quantity $e^{iz(3\beta-\alpha)/2}$ and secondly to the remaining integral term after all the integrations by parts.

For this result, we need to control uniformly the hypergeometric function, see as a function of the complex parameter $z$ with respect to $\frac{1}{1+e^{2t}}$. To obtain such a result, we apply Proposition \ref{Prop1} of the Appendix. So, for every $t \in \left(\frac{\alpha+\beta}{2},\frac{3\beta-\alpha}{2}\right)$, we have:
\[
F\left(c-a,c-b,c;\frac{1}{1+e^{2t}} \right) = 1 +o(1),
\]
as $|c|$ tends to infinity.
So, using Leibniz formula, we can write the first term of the sum of the derivatives $$\partial_t^{p+1}\left[K^+((\alpha+\beta)/2,t) F\left(c-a,c-b,c; \frac{1}{1+e^{2t}} \right)\right]$$ as $$\partial_t^{p+1}K^+((\alpha+\beta)/2,t) F\left(c-a,c-b,c; \frac{1}{1+e^{2t}} \right),$$ and then we have that
\[
\begin{split}
    & e^{-iz\frac{3\beta-\alpha}{2}}\int_{\frac{\alpha+\beta}{2}}^{\frac{3\beta-\alpha}{2}} \partial_t^{p+1}K^+((\alpha+\beta)/2,t) F\left(c-a,c-b,c; \frac{1}{1+e^{2t}} \right) e^{itz} \, \mathrm{d}t \\
    & = e^{-iz\frac{3\beta-\alpha}{2}}\left(\int_{\frac{\alpha+\beta}{2}}^{\frac{3\beta-\alpha}{2}} \partial_t^{p+1}K^+((\alpha+\beta)/2,t) e^{itz} \, \mathrm{d}t \right. \\
    & \left. + \int_{\frac{\alpha+\beta}{2}}^{\frac{3\beta-\alpha}{2}} \partial_t^{p+1}K^+((\alpha+\beta)/2,t) e^{itz} o(1) \, \mathrm{d}t \right).
\end{split}
\]
Then, according to Lemma 2.2 in \cite{Titch}, we have that
\[
\int_{\frac{\alpha+\beta}{2}}^{\frac{3\beta-\alpha}{2}} \Phi(t) {e^{itz}} \, \mathrm{d}t = o(e^{-\frac{3\beta-\alpha}{2}\Im(z)}),
\]
for $\Phi$ an integrable function on $[\frac{\alpha+\beta}{2},\frac{3\beta-\alpha}{2}]$. So the integral term
\[
e^{-iz\frac{3\beta-\alpha}{2}}\int_{\frac{\alpha+\beta}{2}}^{\frac{3\beta-\alpha}{2}} \partial_t^{p+1}K^+((\alpha+\beta)/2,t) e^{itz} \, \mathrm{d}t
\]
tends to 0. For the second integral term, we apply the triangle inequality, use Proposition \ref{PropNoyau} which implies that the derivatives $\partial_t K^+((\alpha+\beta)/2,t)$ are bounded, and obtain the same conclusion as for the first one. Since the other terms of the Leibniz formula are made of derivatives of hypergeometric functions, we apply the same reasoning to these derivatives, using again Proposition \ref{Prop1}.
This concludes the proof.

Concerning the second formula, the only difference in the proof is that we make $p$ integrations by parts this time.
\end{proof}
We can state a similar lemma for the functions $f^-(0,z)$ and $(f^-)'(0,z)$.
\begin{lemme} 
 Let $q$ be a real-valued, integrable and compactly supported potential such that $\supp{q} \subset [\alpha,\beta]$. We assume that $q$ satisfies Assumption \ref{Asum}.
    We have, for $z \in S_2$:
    \begin{align}
   & f^-((\alpha+\beta)/2,z) = f^-_0((\alpha+\beta)/2,z) +F\left(c-a,c-b,c;\frac{e^{3\alpha-\beta}}{1+e^{3\alpha-\beta}}\right) \nonumber \\
   & \times (-1)^{r+1} \frac{\partial_t^rK^-((\alpha+\beta)/2,(3\alpha-\beta)/2)}{(-iz)^{r+1}}e^{-iz(3\alpha-\beta)/2} \left[ 1+o(1) \right],
   \label{f^-asymptotic}
    \end{align}
    and
    \begin{align}
    & (f^-)'((\alpha+\beta)/2,z) \nonumber \\
    & = -izf^-_0((\alpha+\beta)/2,z) (1+ o(1))+F\left(c-a,c-b,c;\frac{e^{3\alpha-\beta}}{1+e^{3\alpha-\beta}}\right) \nonumber \\
    & \times (-1)^r \frac{\partial_t^{r-1}\partial_xK^-((\alpha+\beta)/2,(3\alpha-\beta)/2)}{(-iz)^{r}}e^{-iz(3\alpha-\beta)/2} \left[ 1+o(1) \right],
    \label{f^-'asymptotic}
    \end{align}
    as $|z| \to \pinf$.
    \label{Lemmef^-}
\end{lemme}
We can deduce the corresponding result for $w$.
\begin{prop}
    Let $q$ be a real-valued, integrable and compactly supported with $\supp{q} \subset [\alpha,\beta]$. We assume that $q$ satisfies Assumption \ref{Asum}. Then, for $z \in S_2$,
    \begin{align}
    & w(z) = 2iz \left[ 1 + {o(1)} - \frac{Ae^{2iz(\beta - \alpha)}}{(iz)^{p+r+2}} \left( 1 +  {o(1)} \right)\right],
        \label{NewExW}
    \end{align}
    as $|z| \to \pinf$ and
    where
    \[
    \begin{split}
    A = (-1)^p q^{(r-1)}(\alpha^+) \times q^{(p-1)}(\beta^-)
    \end{split}
    \]
    is a real constant.
    \label{NewExpressionW}
\end{prop}

\begin{proof}
The notations $o$ and $\sim$ hold for $|z|$ tending to $\pinf$ in $S_2$.
    We can now use the asymptotic \eqref{HypGeoLim} for the hypergeometric functions (given in Proposition \ref{Asymptot}), \eqref{f^+'asymptotic} and \eqref{f^-asymptotic} in order to have, for $z \in S_2$:
   \[
    \begin{split}
        & f^-((\alpha+\beta)/2,z)(f^+)'((\alpha+\beta)/2,z) \\
        & = iz \left[ 1 +o(1) - \frac{Ae^{2iz(\beta - \alpha)}}{(iz)^{p+r+2}} \left( 1 + {o(1)} \right)\right], \\
    \end{split}
    \]
    where $A = (-1)^p q^{(r-1)}(\alpha^+) \times q^{(p-1)}(\beta^-)$.
    The same reasoning applied to \eqref{f^+asymptotic} and \eqref{f^-'asymptotic} gives:
        \[
    \begin{split}
        & f^+((\alpha+\beta)/2,z)(f^-)'((\alpha+\beta)/2,z) \\
        & = -iz \left[ 1 +o(1) - \frac{Ae^{2iz(\beta - \alpha)}}{(iz)^{p+r+2}} \left( 1 + {o(1)} \right)\right], \\
    \end{split}
    \]
    with the same $A$.
    We used above to define the constant $A$ the fact that $\partial_t^{r-1} \partial_x K^-((\alpha+\beta)/2,(3\alpha-\beta)/2) = \partial_t^r K^-((\alpha+\beta)/2,(3\alpha-\beta)/2)$, and similarly $\partial_t^{p-1} \partial_x K^+((\alpha+\beta)/2,(3\beta-\alpha)/2) = \partial_t^p K^+((\alpha+\beta)/2,(3\beta-\alpha)/2)$. This is a direct consequence of the fact that $K^+(x,2\beta-x)=K^-(x,2\alpha - x)=0$ for $x$ a real number.
    Finally, we can subtract the expression $f^+((\alpha+\beta)/2,z)(f^-)'((\alpha+\beta)/2,z)$ from the one of $f^-((\alpha+\beta)/2,z)(f^+)'((\alpha+\beta)/2,z)$ to obtain the wanted expression of $w$.
\end{proof}    
From \eqref{NewExW}, we can determine an asymptotic for the resonances belonging to the domain $S_2$.
\begin{theo}
    Let $V$ be a potential defined by $V(x) = \frac{\lambda}{\cosh^2(x)}+q(x)$, with $q$ real-valued, integrable and its support included in $[\alpha,\beta]$. Furthermore, we assume $q$ satisfies Assumption \ref{Asum}. The set of resonances in $S_2$ is given by a sequence $(\beta_j)_{j\in\Z^*}$ such that:
    \[
    \begin{split}
    \beta_{\pm j} & = \pm \frac{\pi}{2(\beta - \alpha)} \left( 2j + \frac{p+r+2}{2} \pm \frac{\left( 1-sign(A) \right)}{2} \right) \\
    & - \frac{i(p+r+2)}{2(\beta - \alpha)} \log \left( \frac{j \pi}{\beta - \alpha}\right) + \frac{i}{2(\beta - \alpha)} \log|A| +o(1),
    \end{split}
    \]
    as $j \to \pinf$ and with 
    $A = {(-1)^{p} q^{(r-1)}(\alpha^+) \times q^{(p-1)}(\beta^-)}$ a real constant.
    \label{Theo4}
\end{theo}
\begin{proof}
     The notations $o$ and $\sim$ hold for $|z|$ tending to $\pinf$ in $S_2$. 
     It follows from Hardy \cite[422-423]{Hardy}
     and Cartwright \cite{Cartwright1,Cartwright2} method that the zeros denoted $(\beta_j)_j$ of 
    \[
    1 + \underset{|z| \to \pinf}{o(1)} - \frac{Ae^{2iz(\beta - \alpha)}}{(iz)^{p+r+2}} \left( 1 +  \underset{|z| \to \pinf}{o(1)} \right)
    \]
    are located in the lower half-plane and have the following asymptotics:
    \[
    \begin{split}
    \beta_{\pm j} & = \pm \frac{\pi}{2(\beta - \alpha)} \left( 2j + \frac{p+r+2}{2} \pm \frac{\left( 1-sign(A) \right)}{2} \right) \\
    & - \frac{i(p+r+2)}{2(\beta - \alpha)} \log \left( \frac{j \pi}{\beta - \alpha}\right) + \frac{i}{2(\beta - \alpha)} \log|A| +o(1).
    \qedhere
    \end{split}
    \]
\end{proof}
\subsection{Absence of resonances in $S_1$}
Now, we want to locate resonances in $S_1$. Since \eqref{NewExW} doesn't hold in $S_1$, we propose to find another expression for the Wronskian $w$.

\underline{We first make the assumption that $0 \in (\alpha,\beta)$}, since the asymptotics of the hypergeometric functions will be easier in that case.

We have:
\begin{align}
    w(z) & = [f^-(0,z),f^+(0,z)] \notag \\
    & = f^-(0,z)(f^+)'(0,z)-(f^-)'(0,z)f^+(0,z) \notag \\
    & = f^-(0,z)(f^+)'(0,z)\left(1 - \frac{(f^-)'(0,z)f^+(0,z)}{f^-(0,z)(f^+)'(0,z)} \right) \notag \\
    & = f^-(0,z)(f^+)'(0,z)\left(1 - \frac{m_-(z)}{m_+(z)} \right),
    \label{WT}
\end{align}
with $m_+(z):= \frac{(f^+)'(0,z)}{f^+(0,z)}$, $m_-(z):= \frac{(f^-)'(0,z)}{f^-(0,z)}$, and for $z$ a complex number in the domain $S_1$ such that $f^-(0,z)(f^+)'(0,z) \neq 0$. We point out here that the two quantities $m_+$ and $m_-$ are called Weyl-Titchmarsh functions. 
See \cite{GesSim} and \cite{Teschl} for more details on Weyl-Titchmarsh functions and their use in inverse problems.
Zeros of $w$ are either zeros of $f^\pm(0,z)$ (or their derivatives) or zeros of the third factor in \eqref{WT}. We are going to study these three factors in $S_1$ in order to locate resonances in this domain.
In order to give asymptotics for $f^\pm(0,z)$ and $(f^\pm)'(0,z)$ in $S_1$, we need the following lemma. 
\begin{lemme}
     Let $\zeta$ be a complex number such that $\Re(\zeta) < \frac{1}{2}$.
     Then, there exist $\delta$ and $R$ positive real numbers such that:
     \begin{equation}
         |F(c-a,c-b,c;\zeta)| > \delta,
     \end{equation}
     for $|z| > R$ and $z \in S_1$.
     \label{HypGeoPos}
\end{lemme}
\begin{proof}
This is a direct consequence of Proposition \ref{Asymptot} of the Appendix, and more precisely asymptotics \eqref{HypGeoLim}.
\end{proof}
We are now ready to give the asymptotics for Jost solutions.
\begin{lemme}    \label{Lemmef+}
Let $q$ be a real-valued, integrable and compactly supported with $\supp{q} \subset [\alpha,\beta]$. We assume that $0 \in (\alpha,\beta)$ and $q$ satisfies Assumption \ref{Asum}.
    We have:
\begin{equation}
f^+(0,z) \sim (-1)^p \frac{\partial_t^pK^+(0,2\beta)}{(iz)^{p+1}}{F}\left(c-a,c-b,c;\frac{1}{1+e^{4\beta}}\right)e^{2iz\beta},
\label{Simf^+}
\end{equation}
and
\begin{equation}
(f^+)'(0,z) \sim (-1)^{p-1} \frac{\partial_t^{p-1}\partial_xK^+(0,2\beta)}{(iz)^{p}}{F}\left(c-a,c-b,c;\frac{1}{1+e^{4\beta}}\right)e^{2iz\beta},
\label{Simf^+'}
\end{equation}
when $|z| \to +\infty$ and $z \in S_1$.
\end{lemme}
\begin{proof}
    The notations $o$ and $\sim$ holds for $|z|$ tending to $\pinf$ and $z \in S_1$. 
    As the condition to apply the third item of Proposition \ref{Asymptot} are not satisfied in $S_1$, we need to adapt the formula \eqref{IPP} to this domain. We have:
\begin{align*}
& \int_0^{2\beta } K^+(0,t) f^+_0(t,z) \, \mathrm{d}t = \int_0^{2\beta } K^+(0,t) F\left(c-a,c-b,c; \frac{1}{1+e^{2t}} \right) e^{itz} \, \mathrm{d}t\notag \\
& = \sum_{k=0}^{p} \frac{(-1)^{k+1}}{(iz)^{k+1}}\partial_t^k \left.\left( K^+(0,t) F\left(c-a,c-b,c; \frac{1}{1+e^{2t}} \right)\right) \right|_{t=0} \notag \\
& + \left[ (-1)^p \frac{e^{itz}}{(iz)^{p+1}}\partial_t^p\left( K^+(0,t) F\left(c-a,c-b,c; \frac{1}{1+e^{2t}} \right)\right) \right]^{2 \beta}_0 \notag \\
& - \int_0^{2\beta } (-1)^p \partial_t^{p+1}\left[ K^+(0,t) F\left(c-a,c-b,c; \frac{1}{1+e^{2t}} \right)\right] \frac{e^{itz}}{(iz)^{p+1}} \, \mathrm{d}t \notag \\
& = g(z) + (-1)^p \frac{\partial_t^pK^+(0,2\beta)}{(iz)^{p+1}}F\left(c-a,c-b,c;\frac{1}{1+e^{4\beta}}\right)e^{2iz\beta} \left[ 1+o(1) \right],
\end{align*}
where the quantity $g(z)$ is defined by
\[
g(z) := \sum_{k=0}^{p} \frac{(-1)^{k+1}}{(iz)^{k+1}}\partial_t^k \left.\left( K^+(0,t) F\left(c-a,c-b,c; \frac{1}{1+e^{2t}} \right)\right) \right|_{t=0}.
\]
We refer to the proof of Lemma \ref{Lemmef^+}, and especially the use of Proposition \ref{Prop1} of the Appendix for the justification of the term $[1+o(1)]$ in the last expression of $ \int_0^{2\beta } K^+(0,t) f^+_0(t,z) \, \mathrm{d}t$.
From Lemma \ref{HypGeoPos}, we deduce that the quantity
\begin{equation}
    (-1)^p \frac{\partial_t^pK^+(0,2\beta)}{(iz)^{p+1}}F\left(c-a,c-b,c;\frac{1}{1+e^{4\beta}}\right)e^{2iz\beta}
    \label{expo}
\end{equation}
has an exponential growth.
We are now going to use Lemma \ref{abc} from the Appendix to show that $f_0^+(0,z)$ and $g$ have at most a polynomial growth in $S_1$ as $|z| \to \pinf$. Using Propositions \ref{Noyaux} and \ref{PropNoyau}, it is enough to prove that the hypergeometric functions in the definition of $f_0^+(0,z)$ and $g$ have at most a polynomial growth in $S_1$ as $|z| \to \pinf$. All the hypergeometric terms in the definitions of $f_0^+(0,z)$ and $g$ are of the form:
\[
\begin{split}
    2^{\frac{1}{2}-\frac{a'}{2}-\frac{b'}{2}}\frac{\sqrt{\pi}\Gamma\left(\frac{a'}{2}+\frac{b'}{2}+\frac{1}{2}\right)}{\Gamma\left(\frac{a'}{2}+\frac{1}{2}\right) \Gamma\left(\frac{b'}{2}+\frac{1}{2}\right)},
\end{split}
\]
with $a' = a+k$, $b' = b+k$ for $a$ (respectively $b$) defined in \eqref{a} (respectively \eqref{b}) and $k$ an integer between $0$ and $p$ due to Lemma \ref{abc}.
We use now Euler's reflection formula
\[
\Gamma(1-z)\Gamma(z) = \frac{\pi}{\sin(\pi z)},~z \in \C \backslash \Z,
\]
to modify the expression of gamma function, so we will be able to use Stirling formula. 
So we have
\[
\begin{split}
    & \frac{\sqrt{\pi}\Gamma\left(\frac{a'}{2}+\frac{b'}{2}+\frac{1}{2}\right)}{\Gamma\left(\frac{a'}{2}+\frac{1}{2}\right)\Gamma\left(\frac{b'}{2}+\frac{1}{2}\right)} = \frac{\sqrt{\pi}\Gamma\left(c'\right)}{\Gamma\left(\frac{a'}{2}+\frac{1}{2}\right)\Gamma\left(\frac{b'}{2}+\frac{1}{2}\right)} \\
    & = \sqrt{\pi} \frac{\Gamma\left(\frac{1}{2}-\frac{a'}{2}\right)\Gamma\left(\frac{1}{2}-\frac{b'}{2}\right)\sin\left(\pi\left(\frac{a'}{2}+\frac{1}{2}\right)\right)\sin\left(\pi\left(\frac{b'}{2}+\frac{1}{2}\right)\right) \pi}{\Gamma(1-c') \pi^2 \sin(\pi c')} \\
    & = \frac{1}{\sqrt{\pi}}\frac{\sin\left(\pi\left(\frac{a'}{2}+\frac{1}{2}\right)\right)\sin\left(\pi\left(\frac{b'}{2}+\frac{1}{2}\right)\right)}{\sin(\pi c')} \frac{\Gamma\left(\frac{1}{2}-\frac{a'}{2}\right)\Gamma\left(\frac{1}{2}-\frac{b'}{2}\right)}{\Gamma(1-c')},
\end{split}
\]
with $c' = \frac{a'}{2}+\frac{b'}{2}+\frac{1}{2}$.
Then, we apply Stirling formula
\[
\Gamma(z) \sim (2\pi z)^{\frac{1}{2}} e^{-z} z^z, |\text{Arg}(z)| \leq \pi - \varepsilon, \varepsilon>0,
\]
on the last term. Due to the fact that $z \in S_1$, we are in the domain of application of Stirling formula:
\[
\begin{split}
    \frac{\Gamma\left(\frac{1}{2}-\frac{a'}{2}\right)\Gamma\left(\frac{1}{2}-\frac{b'}{2}\right)}{\Gamma\left(1-c'\right)} & \sim \frac{\sqrt{2\pi\left(\frac{1}{2}-\frac{a'}{2}\right)}\sqrt{2\pi\left(\frac{1}{2}-\frac{b'}{2}\right)}}{\sqrt{2\pi\left(1-c'\right)}} \\
    & \times \frac{e^{-\left(\frac{1}{2}-\frac{a'}{2}\right)}e^{-\left(\frac{1}{2}-\frac{b'}{2}\right)}}{e^{-(1-c')}} \times \frac{\left(\frac{1}{2}-\frac{a'}{2}\right)^{\frac{1}{2}-\frac{a'}{2}}\left(\frac{1}{2}-\frac{b'}{2}\right)^{\frac{1}{2}-\frac{b'}{2}}}{(1-c')^{1-c'}}.
\end{split}
\]
The first two terms are $O(\sqrt{iz})$. Consider the last one, we have:
\[
\begin{split}
    \frac{\left(\frac{1}{2}-\frac{a'}{2}\right)^{\frac{1}{2}-\frac{a'}{2}}\left(\frac{1}{2}-\frac{b'}{2}\right)^{\frac{1}{2}-\frac{b'}{2}}}{(1-c')^{1-c'}} & = \frac{\left(\frac{1}{2}-\frac{a'}{2}\right)^{\frac{1}{2}}\left(\frac{1}{2}-\frac{b'}{2}\right)^{\frac{1}{2}}}{(1-c')} \\
    & \times\frac{\left(\frac{1}{2}-\frac{a'}{2}\right)^{-\frac{a'}{2}}\left(\frac{1}{2}-\frac{b'}{2}\right)^{-\frac{b'}{2}}}{(1-c')^{-c'}}.
\end{split}
\]
The first term is $O(1)$. Then
\begin{align*}
    & \frac{\left(\frac{1}{2}-\frac{a'}{2}\right)^{-\frac{a'}{2}}\left(\frac{1}{2}-\frac{b'}{2}\right)^{-\frac{b'}{2}}}{(1-c')^{-c'}} \\
    & = \frac{e^{(1-iz)\log(iz)}}{e^{\left(\frac{1}{4}-\frac{iz}{2}+\frac{\sqrt{\frac{1}{4}-\lambda}}{2}+\frac{k}{2}\right)\log\left(\frac{1}{4}+\frac{iz}{2}-\frac{\sqrt{\frac{1}{4}-\lambda}}{2}+\frac{k}{2}\right)}} \\
    & \times \frac{1}{e^{\left(\frac{1}{4}-\frac{iz}{2}-\frac{\sqrt{\frac{1}{4}-\lambda}}{2}+\frac{k}{2}\right)\log\left(\frac{1}{4}+\frac{iz}{2}+\frac{\sqrt{\frac{1}{4}-\lambda}}{2}+\frac{k}{2}\right)}} \\
    & = \frac{e^{\log(iz)}}{e^{\left(\frac{1}{4}+\frac{\sqrt{\frac{1}{4}-\lambda}}{2}+\frac{k}{2}\right)\log\left(\frac{1}{4}+\frac{iz}{2}-\frac{\sqrt{\frac{1}{4}-\lambda}}{2}+\frac{k}{2}\right)}e^{\left(\frac{1}{4}-\frac{\sqrt{\frac{1}{4}-\lambda}}{2}+\frac{k}{2}\right)\log\left(\frac{1}{4}+\frac{iz}{2}+\frac{\sqrt{\frac{1}{4}-\lambda}}{2}+\frac{k}{2}\right)}} \\
    & \times \frac{e^{-iz\log(iz)}}{e^{\frac{-iz}{2}\log\left(\frac{1}{4}+\frac{iz}{2}-\frac{\sqrt{\frac{1}{4}-\lambda}}{2}+\frac{k}{2}\right)}e^{\frac{-iz}{2}\log\left(\frac{1}{4}+\frac{iz}{2}-\frac{\sqrt{\frac{1}{4}+\lambda}}{2}+\frac{k}{2}\right)}}.
\end{align*}
The first term of this product is still a $O(\sqrt{iz})$. For the second one:
\[
\begin{split}
     \frac{e^{-iz\log(iz)}}{e^{\frac{-iz}{2}\log\left(\frac{1}{4}+\frac{iz}{2}-\frac{\sqrt{\frac{1}{4}-\lambda}}{2}\right)}e^{\frac{-iz}{2}\log\left(\frac{1}{4}+\frac{iz}{2}-\frac{\sqrt{\frac{1}{4}+\lambda}}{2}\right)}} \sim K e^{{iz} \log(\frac{1}{2})} \to 0, z \to \infty.
\end{split}
\]
Furthermore, we have
\[
\frac{\sin\left(\pi\left(\frac{a'}{2}+\frac{1}{2}\right)\right)\sin\left(\pi\left(\frac{b'}{2}+\frac{1}{2}\right)\right)}{\sin(\pi c')}  = \frac{\cos(\pi\sqrt{\frac{1}{4} - \lambda}) + \sin((iz+k)\pi)}{2\sin((iz+k)\pi)}.
\]
So $f^+_0(0,z)$ and $g(z)$ has at most a polynomial growth at infinity in $S_1$, and
\begin{equation}
    f^+_0(0,z)+g(z) = O(z^m),~ \text{for some} ~ m \in \N.
    \label{poly}
\end{equation}
Together, the exponential growth of \eqref{expo} and the polynomial growth of $\eqref{poly}$ imply that
\[
\begin{split}
{f}^+(0,z) & \sim \int_0^{2\beta } K^+(0,t) {f}^+_0(t,z) \, \mathrm{d}t \\
& \sim (-1)^p \frac{\partial_t^pK^+(0,2\beta)}{(iz)^{p+1}}{F}\left(c-a,c-b,c;\frac{1}{1+e^{4\beta}}\right)e^{2iz\beta}.
\qedhere
\end{split}
\]
\end{proof}
We obtain similar asymptotics for $f^-$:
\begin{lemme}
Let $q$ be a real-valued, integrable and compactly supported with $\supp{q} \subset [\alpha,\beta]$. We assume that $0 \in (\alpha,\beta)$ and $q$ satisfies Assumption \ref{Asum}.
   We have:
\begin{equation}
f^-(0,z) \sim (-1)^{r+1} \frac{\partial_t^rK^-(0,2\alpha)}{(-iz)^{r+1}}{F}\left(c-a,c-b,c;\frac{e^{4\alpha}}{1+e^{4\alpha}}\right)e^{-2iz\alpha},
\label{Simf^-}
\end{equation}
and
\begin{equation}
(f^-)'(0,z) \sim (-1)^{r} \frac{\partial_t^{r-1}\partial_xK^-(0,2\alpha)}{(-iz)^{r}}{F}\left(c-a,c-b,c;\frac{e^{4\alpha}}{1+e^{4\alpha}}\right)e^{-2iz\alpha},
\label{Simf^-'}
\end{equation}
when $|z| \to +\infty$ and $z \in S_1$.
    \label{Lemmef-}
\end{lemme}
Using these results, we now state a result on the limit of the quantity $\frac{m_-}{m_+}$ in $S_1$.
\begin{lemme}
     Let $q$ be a real-valued, integrable and compactly supported with $\supp{q} \subset [\alpha,\beta]$. We assume that $0 \in (\alpha,\beta)$ and $q$ satisfies Assumption \ref{Asum}. We have:
    \[
    \frac{m_-(z)}{m_+(z)} \to -1,~ |z| \to \infty, ~ z \in S_1 .
    \]
    \label{m-m+}
\end{lemme}
\begin{proof}
The notations $o$ and $\sim$ holds for $|z|$ tending to $\pinf$ in $S_1$. Assume that $|z| \to +\infty$. 
From Lemma \ref{HypGeoPos}, we infer that the quantity $f^-(0,z)({f}^+)'(0,z) \neq 0$ for $|z|$ large in $S_1$.
Then, the expression \eqref{WT} holds in $S_1$ for $|z|$ large. Since the quotient $\frac{m_-(z)}{m_+(z)}$ is well-defined, we can now give an asymptotic for it in $S_1$: 
Using Lemma \ref{Lemmef+}, and the two asymptotics \eqref{Simf^+} and \eqref{Simf^+'}, we have:
\[
\begin{split}
m_+(z) & \sim \frac{(-1)^{p-1} \frac{\partial_t^{p-1}\partial_xK^+(0,2\beta)}{(iz)^{p}}{F}\left(c-a,c-b,c;\frac{1}{1+e^{4\beta}}\right)e^{2iz\beta}}{(-1)^p \frac{\partial_t^pK^+(0,2\beta)}{(iz)^{p+1}}{F}\left(c-a,c-b,c;\frac{1}{1+e^{4\beta}}\right)e^{2iz\beta}} \\
& \sim - \frac{\partial_t^{p-1}\partial_xK^+(0,2\beta)\times iz}{\partial_t^pK^+(0,2\beta)} \\
& \sim -iz,~|z| \to +\infty,~ z \in S_1.
\end{split}
\]
since $\partial_t^{p-1}\partial_xK^+(0,2\beta) = \partial_t^pK^+(0,2\beta)$. 
Using the same proof, and the asymptotics \eqref{Simf^-'} and \eqref{Simf^-} of Lemma \ref{Lemmef-}, we have a similar result for $m_-$:
\[
\begin{split}
m_-(z) & \sim \frac{(-1)^{r} \frac{\partial_t^{r-1}\partial_xK^+(0,2\beta)}{(-iz)^{r}}{F}\left(c-a,c-b,c;\frac{e^{4\alpha}}{1+e^{4\alpha}}\right)e^{-2iz\alpha}}{(-1)^{r+1} \frac{\partial_t^r K^+(0,2\beta)}{(-iz)^{r+1}}{F}\left(c-a,c-b,c;\frac{e^{4\alpha}}{1+e^{4\alpha}}\right)e^{-2iz\alpha}} \\
& \sim \frac{\partial_t^{r-1}\partial_xK^-(0,2\alpha)\times iz}{\partial_t^rK^-(0,2\alpha)} \\
& \sim iz,~ |z| \to +\infty,~ z \in S_1.
\end{split}
\]
Then, combining these two equivalents:
\[
 \frac{m_-(z)}{m_+(z)} \to -1, ~|z| \to +\infty,~ z \in S_1,
\]
and so the result is proved.
\end{proof}
\begin{theo}
Let $V$ be a potential defined by $V(x)  = \frac{\lambda}{\cosh^2(x)}+q(x)$, with $q$ real-valued, integrable and its support included in $[\alpha,\beta]$. Furthermore, we assume that $0 \in (\alpha,\beta)$ and $q$ satisfies Assumption \ref{Asum}.
    There exists $R$ real positive number such that the meromorphic function $w$ has no zero $z$ in $S_1$ with $|z| > R$.
    \label{Nozero}
\end{theo}
\begin{proof}
As $\left( 1 - \frac{m_-(z)}{m_+(z)}\right)$ tends to $2$, it will not vanish for sufficiently large values of $z$. Since $f^-(0,z)(f^+)'(0,z) \neq 0$, then there is no zero $z$ for the meromorphic function $w$, and so no resonance, in $S_1$ for $|z|$ large.
\end{proof}
\begin{rem}
We mention here that in the case of an even perturbation $q$, we immediately have $\frac{m_-(z)}{m_+(z)}=-1$, and so $w(z) = 2f^+(0,z)(f^+)'(0,z)$.
\end{rem}
\underline{Finally, assume that $0 \notin (\alpha,\beta)$.} We suppose that $\supp{q} \subset \R_+$, since the proof of the other case is similar. We have that $f^-(0,z) = f^-_0(0,z)$, because the integral term vanishes. Since ${f}^-_0(0,z)$ has well-known zero in this domain, we can slightly modify the argument used in Theorem \ref{Nozero}.
\begin{theo}
Let $V$ be a potential defined by $V(x)  = \frac{\lambda}{\cosh^2(x)}+q(x)$, with $q$ real-valued, integrable and its support included in $[\alpha,\beta]$. Furthermore, we assume that $0 \leq \alpha$ and $q$ satisfies Assumption \ref{Asum}.
    There exists $R$ real positive number such that the meromorphic function $w$ has no zero $z$ in $S_1$ with $|z| > R$.
\label{NozeroR-}
\end{theo}
\begin{proof}
    The proof is similar to Theorem \ref{Nozero}, exploiting the fact that ${f}^-(0,z) = {f}^-_0(0,z)$.
    Using Lemma \ref{abc}, we know that the zeros of $f^-_0(0,z)$ are exactly the 
    \[
    z_j := -i \left(2 j - 1 \pm \sqrt{\frac{1}{4}-\lambda} - \frac{1}{2} \right),
    \]
    for $j \in \N$.
    An adaptation of the proof of Lemma \ref{Lemmef+} to the case $0 \leq \alpha$ gives us the same conclusion as the asymptotic \eqref{Simf^+} for $(f^+)(0,z)$ in $S_1$:
    \[
    f^+(0,z) \sim (-1)^p \frac{\partial_t^pK^+(0,2\beta)}{(iz)^{p+1}}{F}\left(c-a,c-b,c;\frac{1}{1+e^{4\beta}}\right)e^{2iz\beta}.
    \]
    Using Lemma \ref{HypGeoPos} applied to the quantity $F\left(c-a,c-b,c;\frac{1}{1+e^{2x}} \right)$ for $x>0$, we can prove that $(f^+)'(0,z) \neq 0$ asymptotically.
    So, for $z$ a complex number in $S_1$ which is not a $z_j$ for $j \in \N$, $f^-_0(0,z)(f^+)'(0,z) \neq 0$ for large value of $z$.
    Then, the expression 
    \[
    w(z) = f^-_0(0,z)(f^+)'(0,z) \left( 1 - \frac{m_-(z)}{m_+(z)} \right)
    \]
    for the Wronskian holds in $S_1 \backslash \{z_j ~|~ j \in \N \}$.
    Furthermore, we can prove that 
    \[
    \begin{split}
    m_-(z) & = \frac{(f_0^-)'(0,z)}{f_0^-(0,z)} \\
    & = -iz \tan(\frac{\pi}{2}a)\tan(\frac{\pi}{2}b) \left(1+o(1) \right),~ z \to \infty ,~ |z| \in S_1 \backslash \{z_j ~|~ j \in \N \},
    \end{split}
    \]
    and the proof of the asymptotic of $m_+$ in Lemma \ref{m-m+} is still valid, so 
    \[
    1- \frac{m_-(z)}{m_+(z)} = 1 - \tan(\frac{\pi}{2}a)\tan(\frac{\pi}{2}b) \left( 1+o(1) \right),~ |z| \to \infty ,~ z \in S_1 \backslash \{z_j ~|~ j \in \N \}.
    \]
    Let now consider $\rho >0$ and $z \in S_1 \backslash \bigcup_{j \in \N}B(z_j,\rho)$. Then the quantity $1- \frac{m_-(z)}{m_+(z)}$ cannot vanish, otherwise $\tan(\frac{\pi}{2}a)\tan(\frac{\pi}{2}b) = \frac{\sin(\frac{\pi}{2}a)\sin(\frac{\pi}{2}b)}{\cos(\frac{\pi}{2}a)\cos(\frac{\pi}{2}b)}$ would be equal to $1$. Hence, $\cos\left(\frac{\pi}{2}(a+b)\right)$ would vanish, and so $z$ would be in $-i\N$. This is a contradiction with the definition of $S_1$.
    So, the function $w$ has no zero $z$ for $|z|$ large in $S_1  \backslash \bigcup_{j \in \N}B(z_j,\rho)$.
    
    Let $j \in \N$, and $z \in B(z_j,\rho)$. We take $\rho$ small enough so that the zeros of
    \[
    \begin{split}
    (f^-_0)'(0,z) & = 2^{iz-1}\frac{ab}{c}\frac{\sqrt{\pi}\Gamma\left(\frac{a+1}{2}+\frac{b+1}{2}+\frac{1}{2}\right)}{\Gamma\left(\frac{a+1}{2}+\frac{1}{2}\right) \Gamma\left(\frac{b+1}{2}+\frac{1}{2}\right)},
    \end{split}
    \]
    which are defined by
    \[
    \tilde{z}_j := -i \left(2 j \pm \sqrt{\frac{1}{4}-\lambda} - \frac{1}{2} \right),
    \]
    are not in $B(z_j,\rho)$. In other words, for every $n \in \N$, we have that $|z_j - \tilde{z}_n| > \rho$.
    We can remark, once again for $\rho$ small enough, that it exists a constant $C >> 1$ such that for $z \in B(z_j,\rho)$, $\left|\tan(\frac{\pi}{2}a)\tan(\frac{\pi}{2}b)\right| \geq C$. Then we have, for $z \in \partial B(z_j,\rho)$ the border of the disk:
    \[
    \begin{split}
       & \left| 1 - \frac{m_-(z)}{m_+(z)} - \tan(\frac{\pi}{2}a)\tan(\frac{\pi}{2}b)\right| = \left| 1 + \tan(\frac{\pi}{2}a)\tan(\frac{\pi}{2}b)o(1) \right| \\
        & = \left|\tan(\frac{\pi}{2}a)\tan(\frac{\pi}{2}b)\right| \left| \frac{1}{\tan(\frac{\pi}{2}a)\tan(\frac{\pi}{2}b)} + o(1) \right| \\
        & < \left| \tan(\frac{\pi}{2}a)\tan(\frac{\pi}{2}b) \right|.
    \end{split}
    \]
    Applying Rouché's theorem, we prove that on the set $B(z_j,\rho)$ the quantity $1-\frac{m_-(z)}{m_+(z)}$ has no zero, which concludes the proof.
\end{proof}
In the case $\supp{q} \subset \R_-$, we can use the symmetries of the problem under the change of variable $\tilde{x} = -x$. A potential $q$ with its support in $\R_-$ has now its support in $\R_+$, where we know how to conclude.

\section{Acknowledgements}
I would like to thank my advisors, Nabile Boussaïd and Thierry Daudé, for their answers, comments and advice during the development of this work. I would also like to sincerely and deeply thank the anonymous referees for their feedbacks, which have helped to improve the quality of this work. This work was supported by the French 'Investissements d’Avenir’ program, project Agence Nationale de la Recherche (ISITE-BFC) (contract ANR-15-IDEX-0003). The work is part of the EUR project TACTICQ.

\appendix
\section{Appendix: some results on hypergeometric \newline functions}

We recall here some results on hypergeometric functions. See \cite{Lebedev,Temme} or \cite{NIST:DLMF,NIST} for classical references. The result on asymptotics presented in Proposition \ref{Asymptot} have been proved by Wagner \cite{Wagner}. 


\begin{defi}{(See \cite[15.2.1]{NIST:DLMF})}
Let $a$, $b$, $c$ and $\zeta$ be complex numbers, with $|\zeta| < 1$. Then, an hypergeometric function is defined by
    \[
\zeta \mapsto F(a,b,c;\zeta):= {}_2F_1(a,b,c;\zeta) = \sum_{n=0}^{\pinf} \frac{(a)_n(b)_n}{(c)_n}\frac{\zeta^n}{n!},
\]
where $(x)_n$ is the Pochhammer symbol, defined by $(x)_0:=1$ and for every positive integer $n$ 
\[
(x)_n:= x(x+1)\dots(x+n-1) = \frac{\Gamma(x+n)}{\Gamma(x)}.
\]
\label{Definition}
\end{defi}
We give here a proposition on asymptotics for the hypergeometric functions as $|c|$ tends to infinity:
\begin{prop}{(See \cite[Satz 1, p.443]{Wagner})}
Let $\delta$ denote an arbitrary small positive constant. Also let $a$, $b$, $\zeta$ be real or complex and fixed, and at least one of the following conditions be satisfied: 
\begin{enumerate}
\item $a$ and/or $b$ $\in \{0,-1,-2,\dots\}$;
\item $\Re(\zeta) < 1/2$ and $c$ such that $|c+n| \geq \delta > 0$, for every $n \in \{0,-1,-2,\cdots\}$;
\item $\Re(\zeta) = 1/2$ and $|\arg(c)| \leq \pi - \delta$;
\item $\Re(\zeta) > 1/2$ and $\alpha_{-}- \pi/2 + \delta \leq \arg(c) \leq \alpha_{+} + \pi/2 - \delta$  
\[
\alpha_{\pm}:= \arctan(\frac{\arg(\zeta)-\arg(1-\zeta)\mp\pi}{\ln|1-\zeta^{-1}|});
\]

\end{enumerate}
Then, for fixed $m \in \N$,
\begin{equation}
F(a,b,c;\zeta) = \sum_{n=0}^{m-1} \frac{(a)_n (b)_n}{(c)_n} \frac{\zeta^n}{n!} + O(c^{-m}), ~|c| \to \pinf.
\label{HypGeoLim}
\end{equation}
\label{Asymptot}
\end{prop}
\begin{prop}{(See \cite[15.5.1]{NIST:DLMF})}
     Let $a$, $b$ and $\zeta$ be complex numbers. Let $c$ be a complex number such that $c \in \C \backslash \{0,-1,\dots\}$. Then
     \[
     \frac{\mathrm{d}}{\mathrm{d}\zeta}F(a,b,c;\zeta) = \frac{ab}{c}F(a+1,b+1,c+1;\zeta).
     \]
     \label{changement}
\end{prop}
The following proposition is useful to consider the limit as $z$ tends to infinity of $F(a,b,c;\zeta)$, as we have shown earlier that, to be able to consider such a limit, the parameters $a$ and $b$ must be constant.
\begin{prop}{(See \cite[15.8.1]{NIST:DLMF})}
    Let $a$, $b$, $c$ and $\zeta$ be complex numbers such that $|Arg(1-\zeta)|<\pi$. Then we have
    \[
    F(a,b,c;\zeta) = (1-\zeta)^{c-a-b} F(c-a,c-b,c;\zeta). 
    \]
    \label{NewExpF}
\end{prop}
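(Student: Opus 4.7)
The plan is to prove Euler's transformation by showing that both sides of the claimed identity satisfy the hypergeometric ODE $L_{a,b,c} y = 0$ with the same value at $\zeta = 0$, and then to appeal to uniqueness of the analytic solution at that regular singular point, extending the equality to the cut plane $|\mathrm{Arg}(1-\zeta)| < \pi$ by analytic continuation. Here
\[
L_{a,b,c} := \zeta(1-\zeta)\frac{d^2}{d\zeta^2} + (c - (a+b+1)\zeta)\frac{d}{d\zeta} - ab
\]
is the hypergeometric operator; the left-hand side $F(a,b,c;\zeta)$ solves $L_{a,b,c} y = 0$ and takes value $1$ at $\zeta = 0$ by construction of the series.

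For the right-hand side, write $v(\zeta) := \phi(\zeta)\, w(\zeta)$ with $\phi(\zeta) := (1-\zeta)^{s}$, $s := c-a-b$, and $w(\zeta) := F(c-a,c-b,c;\zeta)$. Since $w$ solves $L_{c-a,c-b,c} w = 0$, one expands $L_{a,b,c}(\phi w)$ by the product rule and substitutes for $\zeta(1-\zeta) w'' + (c-(a+b+1)\zeta)w' - ab\, w$ its value modulo $L_{c-a,c-b,c} w$, which is $2s\zeta w' + cs\, w$. The remaining contributions come from $\phi'$, $\phi''$, and the cross term $2\zeta(1-\zeta)\phi' w'$.

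Using $\phi' = -s(1-\zeta)^{s-1}$ and $\phi'' = s(s-1)(1-\zeta)^{s-2}$, the $w'$ contributions cancel through the exact relation $2\zeta(1-\zeta)\phi' = -2s\zeta\phi$. The $w$ contributions combine into $s\bigl[(s+a+b)\zeta - c\bigr](1-\zeta)^{s-1} w$, which equals $-sc\phi\, w$ as soon as one uses the defining identity $s + a + b = c$, and this exactly cancels the $sc\phi\, w$ term produced by the previous substitution. Hence $L_{a,b,c} v = 0$, while $v(0) = 1$ because $\phi(0) = 1$ and $w(0) = 1$.

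Under the generic assumption $c \notin -\N \cup \{0\}$, the hypergeometric ODE admits a unique solution analytic at $0$ with value $1$ there (the indicial exponents at the regular singular point $0$ are $0$ and $1-c$, and only the root $0$ yields an analytic branch normalised to $1$). Thus the identity holds on $|\zeta| < 1$. The right-hand side extends to $|\mathrm{Arg}(1-\zeta)| < \pi$ through the principal determination of $(1-\zeta)^{s}$, and the left-hand side extends to the same cut plane by standard analytic continuation of the Gauss function along paths avoiding $[1,+\infty)$; by the identity principle the equality persists on the whole domain. The excluded values of $c$ are recovered by continuity in $c$. The main obstacle is the careful bookkeeping in the ODE calculation: the cancellation hinges on the two algebraic facts $s+a+b = c$ and $2\zeta(1-\zeta)\phi' = -2s\zeta\phi$, and any sign slip when computing $\phi''$ or the product rule would destroy the reduction.
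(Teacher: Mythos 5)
Your proof is correct, and it is worth noting that the paper itself offers no proof of this proposition: it is stated in the appendix as a standard fact with a pointer to the monographs of Lebedev and Temme. Your ODE argument is the classical derivation of Euler's transformation, and the computation checks out: reducing $\zeta(1-\zeta)w''+(c-(a+b+1)\zeta)w'-ab\,w$ modulo $L_{c-a,c-b,c}w=0$ does give $2s\zeta w'+cs\,w$ (the $w'$ coefficient is $\bigl((c-a)+(c-b)+1-(a+b+1)\bigr)\zeta=2s\zeta$ and the $w$ coefficient is $(c-a)(c-b)-ab=cs$), the cross term $2\zeta(1-\zeta)\phi'w'=-2s\zeta\phi w'$ kills it, and the remaining $w$ terms sum to $s(1-\zeta)^{s-1}\bigl[(s+a+b)\zeta-c\bigr]w=-cs\phi w$, so $L_{a,b,c}(\phi w)=0$. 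The uniqueness argument at the regular singular point is also sound: for $c\notin\{0,-1,-2,\dots\}$ the space of solutions analytic at $0$ is one-dimensional, so the normalisation $v(0)=1$ pins down $v=F(a,b,c;\cdot)$ on $|\zeta|<1$, and the identity principle extends the equality to the cut plane. Two small remarks. First, the statement as printed in the paper contains a typo, $F(c-a,c-a,c;\zeta)$ instead of $F(c-a,c-b,c;\zeta)$; you silently worked with the correct form, which is the right thing to do. Second, your closing sentence about recovering the excluded values of $c$ ``by continuity'' is vacuous rather than wrong: for $c\in\{0,-1,-2,\dots\}$ neither side is defined (one would have to pass to the normalised function $\mathbb{F}$), so there is nothing to recover and the clause can simply be dropped.
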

The next two lemmas give values of two different Wronskians. They are useful, combined with the next proposition, to calculate Wronskians such that $w_0$ and $s_0^\pm$.
\begin{lemme}{(See \cite[15.10.3, 15.10.12]{NIST:DLMF})}
     Let $a$, $b$, $c$ and $\zeta$ be complex numbers.
    Let $f_1$ and $f_2$ be two functions defined by
    \[
    \begin{split}
    f_1(\zeta) &:= F(a,b,c;\zeta) 
    \end{split}
    \]
    {and}
    \[
    \begin{split}
    f_2(\zeta) & := \zeta^{1-c} F(a-c+1,b-c+1,2-c;\zeta) \\
    & = \zeta^{1-c}\left(1-\zeta\right)^{c-a-b}F(1-a,1-b,2-c;\zeta).
    \end{split}
    \]
    Then 
    \[
    [f_1(\zeta),f_2(\zeta)] = (1-c) \zeta^{-c} (1-\zeta)^{c-a-b-1}.
    \]
    \label{LemmeAppendix1}
\end{lemme}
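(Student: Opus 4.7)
The approach is to apply Abel's identity to the hypergeometric differential equation \eqref{EH1} in order to determine $[f_1,f_2]$ up to a multiplicative constant, and then fix the constant from the leading-order behaviour near $\zeta=0$. Note that the Wronskian formula on the right-hand side is the classical one for the standard Kummer solution $f_2(\zeta) = \zeta^{1-c}F(a-c+1,b-c+1,2-c;\zeta)$, so I interpret the definition accordingly (the exponent $\zeta^c$ in the statement appears to be a typo for $\zeta^{1-c}$, as otherwise $f_2$ would not solve \eqref{EH1} because the indicial roots at $\zeta=0$ are $0$ and $1-c$).

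First I would check that both $f_1$ and $f_2$ solve \eqref{EH1}. This is immediate for $f_1$. For $f_2$, the substitution $y = \zeta^{1-c}v$ transforms \eqref{EH1} into the hypergeometric equation with the shifted parameters $(a-c+1, b-c+1, 2-c)$, so $v = F(a-c+1,b-c+1,2-c;\zeta)$ is a bona fide solution and therefore $f_2$ solves \eqref{EH1}. Next I would rewrite \eqref{EH1} in normal form $y'' + P(\zeta)y' + Q(\zeta)y = 0$ with
\[
P(\zeta) = \frac{c-(a+b+1)\zeta}{\zeta(1-\zeta)} = \frac{c}{\zeta} + \frac{c-a-b-1}{1-\zeta},
\]
the last equality via partial fractions. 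Abel's identity then yields
\[
[f_1,f_2](\zeta) = K \exp\left(-\int P(\zeta)\, \mathrm{d}\zeta\right) = K\, \zeta^{-c}(1-\zeta)^{c-a-b-1}
\]
for some $K \in \C$ to be determined.

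To pin down $K$, I would expand around $\zeta = 0$. Using $F(\cdot,\cdot,\cdot;\zeta) = 1 + O(\zeta)$, one has $f_1(\zeta) = 1 + O(\zeta)$, $f_1'(\zeta) = O(1)$, $f_2(\zeta) = \zeta^{1-c}(1+O(\zeta))$ and $f_2'(\zeta) = (1-c)\zeta^{-c}(1+O(\zeta))$. Hence
\[
[f_1,f_2](\zeta) = f_1 f_2' - f_1' f_2 = (1-c)\zeta^{-c}(1+o(1)), \quad \zeta \to 0,
\]
which compared with $K\zeta^{-c}(1+o(1))$ forces $K = 1-c$. The only non-routine point in this plan is the parameter-shift calculation that identifies $f_2$ as a solution of \eqref{EH1}, but this is a standard computation available in \cite{Lebedev,Temme}; everything else is an application of Abel's theorem and a one-term asymptotic expansion.
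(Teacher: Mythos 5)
Your proof is correct. The paper itself offers no proof of this lemma: it is stated in the appendix as a standard fact recalled from the literature (Lebedev, Temme), so there is no argument of the paper to compare against; your Abel-identity derivation is exactly the classical textbook proof of this Wronskian formula. Two remarks. First, you are right that the exponent in the statement must be read as $\zeta^{1-c}$ rather than $\zeta^{c}$: the indicial roots of \eqref{EH1} at $\zeta=0$ are $0$ and $1-c$, so only $\zeta^{1-c}F(a-c+1,b-c+1,2-c;\zeta)$ is a solution, and the right-hand side $(1-c)\zeta^{-c}(1-\zeta)^{c-a-b-1}$ is precisely the classical Wronskian for that pair; with $\zeta^{c}$ literally, both the solution property and the asserted formula would fail. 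Second, your argument implicitly requires the usual genericity hypotheses that the blanket phrase ``let $a,b,c,\zeta$ be complex numbers'' sweeps under the rug: one needs $2-c\notin\{0,-1,-2,\dots\}$ (and $c\notin\{0,-1,-2,\dots\}$) for the two hypergeometric series to be defined, a fixed branch of $\zeta^{1-c}$ and of $(1-\zeta)^{c-a-b-1}$ on a slit neighbourhood of $(0,1)$, and in the constant-matching step the observation that the cross term $f_1'f_2=O(\zeta^{1-c})$ is $o(\zeta^{-c})$ as $\zeta\to0$, which you do state. With those caveats made explicit the proof is complete.
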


\begin{lemme}{(See \cite[Equation 15.10.5]{NIST:DLMF})}
    Let $a$, $b$, $c$ and $\zeta$ be complex numbers.
    Let $f_3$ and $f_4$ be two functions defined by
    \[
    f_3(\zeta):= F(a,b,a+b+c-1;1-\zeta) \]
    and
    \[
    f_4(\zeta):= (1-\zeta)^{c-a-b} F(c-a,c-b,c-a-b+1;1-\zeta).
    \]
    Then 
    \[
    [f_3(\zeta),f_4(\zeta)] = (a+b-c) \zeta^{-c} (1-\zeta)^{c-a-b-1}.
    \]
    \label{LemmeAppendix2}
\end{lemme}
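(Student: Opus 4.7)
The plan is to apply Abel's formula for the Wronskian to the hypergeometric differential equation \eqref{EH1}, and then pin down the resulting multiplicative constant by a local expansion at $\zeta=1$.

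First, I would check that both $f_3$ and $f_4$ are solutions of \eqref{EH1} with parameters $(a,b,c)$. Under the substitution $\eta = 1-\zeta$, the equation becomes a hypergeometric equation in $\eta$ with parameters $(a,b,a+b-c+1)$, whose regular singular point at $\eta=0$ has indicial exponents $0$ and $c-a-b$. The two corresponding Frobenius solutions are exactly $F(a,b,a+b-c+1;\eta)$ and $\eta^{c-a-b}F(c-a,c-b,c-a-b+1;\eta)$, which are $f_3$ and $f_4$.

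Second, writing \eqref{EH1} in the form $y''+p(\zeta)y'+q(\zeta)y=0$ and decomposing
\[
p(\zeta) = \frac{c-(a+b+1)\zeta}{\zeta(1-\zeta)} = \frac{c}{\zeta} + \frac{c-a-b-1}{1-\zeta},
\]
Abel's theorem gives
\[
[f_3,f_4](\zeta) = K\,\zeta^{-c}(1-\zeta)^{c-a-b-1}
\]
for some constant $K=K(a,b,c)$ to be determined.

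To identify $K$, I would expand near $\zeta=1$. Using the derivative identity of Proposition~\ref{changement}, one gets $f_4'(\zeta) = -(c-a-b)(1-\zeta)^{c-a-b-1}(1+o(1))$, whereas $f_3(\zeta)\to 1$, $f_3'(\zeta) = O(1)$ and $f_4(\zeta) = O((1-\zeta)^{c-a-b})$, so the cross term $f_3'\,f_4$ is negligible. Hence $[f_3,f_4](\zeta) \sim (a+b-c)(1-\zeta)^{c-a-b-1}$ as $\zeta\to 1$, and comparison with $K\zeta^{-c}(1-\zeta)^{c-a-b-1}$ at $\zeta=1$ gives $K=a+b-c$. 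The only subtlety is that the ordering of vanishing orders used here is cleanest when $\Re(c-a-b)>0$; for the remaining parameters one either establishes the identity first in that regime and extends it by analytic continuation in $(a,b,c)$, or expands $f_4$ into its full Frobenius series to keep the leading $(1-\zeta)^{c-a-b-1}$ contribution explicit. This bookkeeping is the only real work in the proof.
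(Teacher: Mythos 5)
Your proof is correct. The paper itself states this lemma without proof, quoting it as a standard fact from the hypergeometric literature (\cite{Lebedev,Temme}), so there is no in-paper argument to compare against; your route --- recognise $f_3,f_4$ as the two Frobenius solutions of \eqref{EH1} at the singular point $\zeta=1$, apply Abel's formula with the partial-fraction decomposition $p(\zeta)=\frac{c}{\zeta}+\frac{c-a-b-1}{1-\zeta}$, and fix the constant by a local expansion at $\zeta=1$ --- is exactly the standard derivation. Two remarks. First, as printed the lemma has $f_3(\zeta)=F(a,b,a+b+c-1;1-\zeta)$; the third parameter should read $a+b-c+1$, since otherwise $f_3$ does not solve \eqref{EH1} and the identity fails. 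Your proof silently uses the corrected value, which is also the one forced by consistency with the connection formulae of Proposition~\ref{Kummer}. Second, the restriction $\Re(c-a-b)>0$ that you flag as the ``only real work'' is actually unnecessary: writing
\[
\frac{[f_3,f_4](\zeta)}{(1-\zeta)^{c-a-b-1}} = -(c-a-b)\,f_3(\zeta)\,F\bigl(c-a,c-b,c-a-b+1;1-\zeta\bigr) + (1-\zeta)\cdot O(1),
\]
the cross term $f_3'f_4$ and the second piece of $f_4'$ both carry a full extra power of $1-\zeta$ relative to the leading term, for every admissible choice of parameters, so letting $\zeta\to1$ gives $K=a+b-c$ directly and no analytic continuation step is needed.
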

This following proposition makes explicit the link between hypergeometric functions at $1-\zeta$ and hypergeometric functions at $\zeta$, and vice versa. These formulas are called ``Kummer's formulas'', and there exist 20 of them actually. We list here only the two that are used in our work.
\begin{prop}[Kummer's formulas]{(See \cite[15.10.17 and 15.10.21]{NIST:DLMF})}
    Let $a$, $b$, $c$ and $\zeta$ be complex numbers. 
    \begin{itemize}
        \item $f_3(\zeta)  = \frac{\Gamma(1-c)\Gamma(c)}{\Gamma(a-c+1)\Gamma(b-c+1)}f_1(\zeta) + \frac{\Gamma(c-1)\Gamma(c)}{\Gamma(a)\Gamma(b)}f_2(\zeta)$;
        \item $f_1(\zeta) = \frac{\Gamma(c)\Gamma(c-a-b)}{\Gamma(c-a)\Gamma(c-b)}f_3(\zeta)+ \frac{\Gamma(c)\Gamma(a+b-c)}{\Gamma(a)\Gamma(b)}f_4(\zeta)$,
    \end{itemize}
    whenever $c$ or $1-c$ are not in $-\N$.
    \label{Kummer}
\end{prop}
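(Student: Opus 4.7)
The plan is to exploit that both identities arise from the fact that any three solutions of a second order linear ODE are linearly dependent. The hypergeometric equation \eqref{EH1} is second order, and the two preceding Wronskian lemmas show that $(f_1,f_2)$ form a fundamental system of solutions near $\zeta = 0$ while $(f_3,f_4)$ form a fundamental system near $\zeta = 1$ (the change of variable $\zeta \mapsto 1-\zeta$ preserves \eqref{EH1}, so $f_3$ and $f_4$ are themselves solutions, and their Wronskian is non-zero by the second lemma). Consequently there exist constants $\lambda_1, \lambda_2, \mu_1, \mu_2$, depending only on $a, b, c$, such that $f_3 = \lambda_1 f_1 + \lambda_2 f_2$ and $f_1 = \mu_1 f_3 + \mu_2 f_4$. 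The entire content of the proposition is thus the explicit evaluation of these four constants in terms of Gamma functions.

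To identify the constants I would probe each identity at the two regular singular points. For the second identity, take $\zeta \to 1^{-}$: since $f_3(\zeta) \to 1$ and $f_4(\zeta)$ vanishes like $(1-\zeta)^{c-a-b}$ when $\Re(c-a-b) > 0$, one reads $\mu_1 = f_1(1) = F(a,b,c;1)$, which Gauss's summation theorem identifies as $\Gamma(c)\Gamma(c-a-b)/(\Gamma(c-a)\Gamma(c-b))$. The coefficient $\mu_2$ is then extracted from the leading $(1-\zeta)^{c-a-b}$ asymptotics of $f_1(\zeta) - \mu_1 f_3(\zeta)$ as $\zeta \to 1^{-}$. The temporary constraint $\Re(c-a-b) > 0$ is removed at the end by analytic continuation in the parameters, since both sides are meromorphic in $(a,b,c)$, with poles avoided by the assumption $z \in \C \setminus i\Z^*$. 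The first identity is handled symmetrically by expanding near $\zeta = 0$: $f_1(0) = 1$ and the singular term $f_2$ contributes a distinguished power of $\zeta$ (different from $1$), so that $\lambda_1$ and $\lambda_2$ are read off from the two leading terms in the expansion of $f_3$ about $0$, each determined by a Gauss-type summation evaluated at $1$ in the variable $1-\zeta$.

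The main technical obstacle is the extraction of the second coefficient ($\mu_2$, respectively $\lambda_2$), because it requires the subleading behaviour at the singular endpoint rather than merely the value. A clean way to carry this out is via the Euler integral representation
\[
F(a,b,c;\zeta) = \frac{\Gamma(c)}{\Gamma(b)\Gamma(c-b)} \int_0^1 t^{b-1}(1-t)^{c-b-1}(1-\zeta t)^{-a}\, \mathrm{d}t,
\]
valid under $\Re(c) > \Re(b) > 0$. After the change of variable $t \mapsto 1 - (1-\zeta)s/\text{something}$ or a splitting of the interval at an intermediate point, the integral decomposes into two pieces whose expansions around $\zeta = 1$ reproduce exactly $f_3$ and $(1-\zeta)^{c-a-b} \cdot f_4/(1-\zeta)^{c-a-b}$, with the $\Gamma$-function coefficients falling out of Beta integrals. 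Analytic continuation in $(a,b,c)$ then removes the parameter restrictions used to justify the integral and yields the claimed formulae in full generality. An equivalent alternative is to start from the Mellin--Barnes integral representation of $F(a,b,c;\zeta)$ and shift the contour, picking up two series of residues that correspond precisely to the two basis solutions at $\zeta = 1$; this route delivers both coefficients simultaneously and is the route I would favour if the Euler-integral splitting becomes cumbersome.
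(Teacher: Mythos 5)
The paper does not actually prove this proposition: it is quoted as a standard result, with the reader referred to Lebedev and Temme, so there is no in-paper argument to compare against. Your proposal is essentially the classical textbook derivation of the connection formulae, and it is sound: $(f_1,f_2)$ and $(f_3,f_4)$ are fundamental systems at $\zeta=0$ and $\zeta=1$ respectively (the nonvanishing of the Wronskians in the two preceding lemmas gives this away from the degenerate parameter values excluded by $z\in\C\setminus i\Z^*$), so the connection coefficients exist, and the first one in each identity is read off from Gauss's theorem $F(a,b,c;1)=\Gamma(c)\Gamma(c-a-b)/(\Gamma(c-a)\Gamma(c-b))$ under a temporary constraint on $\Re(c-a-b)$ that analytic continuation in the parameters removes (the coefficients are Wronskian ratios, hence meromorphic in $(a,b,c)$). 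You correctly identify the one genuine difficulty, namely the second coefficient, and your two proposed resolutions (splitting the Euler integral, or shifting the Mellin--Barnes contour) are both standard and workable. A slightly lighter alternative that stays within the toolkit already used: apply the Euler transformation $F(a,b,c;\zeta)=(1-\zeta)^{c-a-b}F(c-a,c-b,c;\zeta)$ to $f_1$, multiply the identity by $(1-\zeta)^{a+b-c}$, and let $\zeta\to1^-$ on the complementary parameter region $\Re(a+b-c)>0$; the $f_3$ term is then killed and Gauss's theorem yields $\mu_2=\Gamma(c)\Gamma(a+b-c)/(\Gamma(a)\Gamma(b))$ directly, with no integral representation needed.

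Two small cautions. First, your remark that ``the change of variable $\zeta\mapsto1-\zeta$ preserves \eqref{EH1}'' is only true when $c=(a+b+1)/2$ (which does hold for the specific $a,b,c$ of this paper); in general the substitution sends the equation to a hypergeometric equation with third parameter $a+b+1-c$, which is exactly why the correct $f_3$ is $F(a,b,a+b-c+1;1-\zeta)$. Second, as literally printed the paper's definition of $f_3$ carries the parameter $a+b+c-1$ rather than $a+b-c+1$; with that reading the stated identities are false for generic parameters, and your proof implicitly (and correctly) works with the standard form. It is worth stating that correction explicitly if you write the argument out.
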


We may remark that, when the coefficients $a$, $b$ and $c$ satisfy the equality: $\frac{a}{2}+\frac{b}{2}+\frac{1}{2}=c$, then we have the following formula for an hypergeometric function at $\zeta = \frac{1}{2}$.
\begin{lemme}{(See \cite[15.4.28]{NIST:DLMF})}
    Let $a$ and $b$ be complex numbers. Then we have:
    \[
    F\left( a,b, \frac{a}{2}+\frac{b}{2}+\frac{1}{2}; \frac{1}{2} \right) = \frac{\sqrt{\pi}\Gamma\left(\frac{a}{2}+\frac{b}{2}+\frac{1}{2}\right)}{\Gamma\left(\frac{a}{2}+\frac{1}{2}\right)\Gamma\left(\frac{b}{2}+\frac{1}{2}\right)}.
    \]
    \label{abc}
\end{lemme}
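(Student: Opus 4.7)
The plan is to prove this identity---known as Gauss's second summation theorem---by reducing it to Gauss's classical summation formula at $\zeta=1$ through one of Kummer's quadratic transformations of the hypergeometric function.

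First, I would establish the quadratic transformation
$$F\left(a,b,\frac{a+b+1}{2};\zeta\right) = F\left(\frac{a}{2},\frac{b}{2},\frac{a+b+1}{2};4\zeta(1-\zeta)\right),$$
valid for $\zeta$ in a neighbourhood of the origin. The assumption $c = (a+b+1)/2$ is precisely what allows such a transformation. The identity can be checked either by verifying that both sides satisfy the same hypergeometric ODE with matching value $1$ and matching first derivative $ab/c$ at $\zeta = 0$, or by expanding and comparing power series coefficients in $\zeta$ near the origin.

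Next, I would evaluate at $\zeta = 1/2$, so that $4\zeta(1-\zeta) = 1$. The problem then reduces to computing $F(a/2,\,b/2,\,(a+b+1)/2;\,1)$, which is handled by Gauss's classical summation formula
$$F(\alpha,\beta,\gamma;1) = \frac{\Gamma(\gamma)\,\Gamma(\gamma-\alpha-\beta)}{\Gamma(\gamma-\alpha)\,\Gamma(\gamma-\beta)},$$
valid whenever $\mathrm{Re}(\gamma-\alpha-\beta) > 0$. Here $\gamma - \alpha - \beta = 1/2$, so the condition holds automatically. Plugging in $\alpha = a/2$, $\beta = b/2$, $\gamma = (a+b+1)/2$ and substituting $\Gamma(1/2) = \sqrt{\pi}$ yields exactly the claimed formula.

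The main obstacle is that the quadratic transformation, viewed as an identity of convergent power series, is only immediate on the region where $|4\zeta(1-\zeta)|<1$, whereas I need to evaluate at $\zeta = 1/2$, where $4\zeta(1-\zeta)=1$ sits on the boundary of the disc of convergence. I would handle this via an Abel-type limit argument: the hypergeometric series on the right-hand side still converges as its argument tends to $1$ thanks to $\mathrm{Re}(\gamma-\alpha-\beta) = 1/2 > 0$, so both sides extend continuously to $\zeta = 1/2$ and the equality passes to the limit. Alternatively, since this formula is a standard entry in the hypergeometric literature, one could simply cite \cite{Lebedev,Temme}.
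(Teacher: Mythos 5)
Your proof is correct. Note that the paper itself offers no proof of this lemma at all: it appears in the appendix among results that are simply ``recalled'' from the hypergeometric literature, with a blanket citation of \cite{Lebedev,Temme}. What you have written is the standard derivation of Gauss's second summation theorem: the quadratic transformation $F\left(a,b,\frac{a+b+1}{2};\zeta\right)=F\left(\frac{a}{2},\frac{b}{2},\frac{a+b+1}{2};4\zeta(1-\zeta)\right)$, followed by Gauss's evaluation at argument $1$, where $\gamma-\alpha-\beta=\frac{1}{2}$ guarantees convergence and produces the factor $\Gamma(1/2)=\sqrt{\pi}$. Your handling of the boundary point is the right level of care: the transformation is a power-series identity near $\zeta=0$, it extends to $(0,1/2)$ by analyticity of both sides there, and the passage to $\zeta=\frac{1}{2}$ (where $4\zeta(1-\zeta)=1$) follows from Abel's theorem since the series converges at $1$. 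The only degenerate situation worth flagging is $\frac{a}{2}+\frac{b}{2}+\frac{1}{2}\in\{0,-1,-2,\dots\}$, where neither side is defined; this caveat is implicit in the statement itself and is not a defect of your argument. Compared with the paper, your approach buys a self-contained verification at the cost of invoking one further classical ingredient (the quadratic transformation); citing the literature, as the paper does, is also perfectly acceptable for such a standard identity.
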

We now mention a connection formula, between different hypergeometric functions, which is a slight modification of the connection formula for $f_3$ in Proposition \ref{Kummer}.

\begin{lemme}{(See \cite[Equation (25)]{Wagner})}
Let $a$, $b$, $c$ and $\zeta$ be complex numbers, such that $\zeta \in (0,1)$. We have, for $c \in \C \backslash \{0,-1,-2,...\}$:
\[
\begin{split}
& F(a,b,c;\zeta) \\
& = \frac{\Gamma\left(a-c+1\right)\Gamma\left(b-c+1\right)}{\Gamma(a+b-c+1)\Gamma(1-c)}F(a,b,a+b-c+1;1-\zeta) \\
& + \frac{\pi}{\sin(\pi c)} \frac{\Gamma\left(a-c+1\right)\Gamma\left(b-c+1\right)}{\Gamma(1-c)\Gamma(2-c)} \frac{\zeta^{1-c} (1-\zeta)^{c-b-a}}{\Gamma(a)\Gamma(b)} \\
& \times F(1-a,1-b,2-c;\zeta).
\end{split}
\label{ConneForm}
\]
\end{lemme}
Now, using Lemma \ref{ConneForm}, we give upper-bounds for hypergeometric functions that are uniform with respect to the complex parameter $\zeta$.
\begin{prop} \label{Prop1}
Let  $a$ and $b$ be fixed complex numbers, $\eta$ and $\rho$ be positive real numbers. Then, for $k \in \N \cup \{0\}$, we have:
\begin{enumerate}
    \item $$\sup_{\zeta \in [\eta,1-\eta]} \left| F(a,b,c;\zeta) -1 \right| \to 0,$$ as $|c| \to \pinf, ~c \in \C ~\text{and} ~ \Re(c) \geq0 ;$
    \item $$\sup_{\zeta \in [\eta,1-\eta]} \left| F(a,b,c;\zeta) -1 \right| \to 0,$$ as $|c| \to \pinf,~c \in \C~\text{and} ~ 0 > \Re(c) >-\delta |\Im(c)|;$
    \item  $\exists C_k >0, ~\forall c \in \C \backslash \bigcup_{n=0}^{\pinf} B(-n,\rho)$, $$\sup_{\zeta \in [\eta,1-\eta]} \left|\frac{\mathrm{d^k}}{\mathrm{d}\zeta^k}F(a,b,c;\zeta)\right| \leq C_k.$$
\end{enumerate}
\end{prop}
\begin{proof}
The notation $o(1)$ holds for $|c|$ tending to infinity.
We start by considering $c$ such that $\Re(c) \geq 0$.
Then by cutting the sum of the hypergeometric function in two, we obtain for $s_0 \in \N$ (to be fixed later):
\begin{align}
    F(a,b,c;\zeta) &  = 1 + \sum_{s=1}^{s_0-1} \frac{(|a|)_{s} (|b|)_{s}}{|(c)_{s} s!|} \zeta^s
    \label{Ligne1}\\
    & + \sum_{s=s_0}^{\pinf} \frac{(|a|)_{s} (|b|)_{s}}{|(c)_{s} s!|} \zeta^s, \label{Ligne2}
\end{align}
We want to prove that each of the two sums tends to zero as $z$ tends to infinity in the upper half-plane.
Then, denoting $m:= \max(|a|,|b|)$ and using that $\Re(c) \geq 0$, we have:
\[
\begin{split}
\left| \frac{|\zeta|^{s+1} \frac{(|a|)_{s+1} (|b|)_{s+1}}{|(c)_{s+1} (s+1)!|}}{|\zeta|^{s} \frac{(|a|)_{s} (|b|)_{s}}{|(c)_{s} s!|}} \right| & = |\zeta| \frac{(|a|+s)(|b|+s)}{|c+s|(s+1)} \\
&  \leq |\zeta| \frac{(m+s)^2}{|c+s|(s+1)}.\\
\end{split}
\]
Let $r>0$ such that $(1-\eta)(1+r) <1$ and $s_0$ be an integer which satisfies: if $s\geq s_0$ then $\frac{(s+m)^2}{(s+1)^2} \leq 1+r$.
Finally, we obtain
\[
|\zeta| \frac{(m+s)^2}{|c+s|(s+1)} \leq (1-\eta)(1+r) <1.
\]
Therefore, the quantity $\sum_{s=s_0}^{\pinf} \frac{(|a|)_{s} (|b|)_{s}}{|(c)_{s} s!|} \zeta^s$ is bounded by
\[
\begin{split}
& \frac{(|a|)_{s_0} (|b|)_{s_0}}{|(c)_{s_0} s_0!|} \zeta^{s_0} \sum_{s=s_0}^{\pinf} \left((1-\eta)(1+r)\right)^{s-s_0} = o(1), ~\text{as}~ |c| \to \infty \\
\end{split}
\]
Since for $s \geq 1$, we have $\frac{(|a|)_{s} (|b|)_{s}}{|(c)_{s} s!|} = o(1)$, we proved that for $\Re(c) \geq 0$ and uniformly with respect to the variable $\zeta \in [\eta,1-\eta]$,
\[
F(a,b,c;\zeta) = 1 + o(1).
\]
Now, we consider $c \in \C$ such that, for $\delta >0$, $0>\Re(c) > -\delta |\Im(c)|$. 
In this case, for $n \geq 2$, we have:
\[
\begin{split}
    |c+n|^2 & \geq (1+\delta^{-2})\Re(c)^2 + n^2 + 2n\Re(c) \\
    & = \left( \sqrt{1+\delta^{-2}}\Re(c) + \frac{n}{\sqrt{1+\delta^{-2}}}\right)^2 + \left( 1 - \frac{1}{1+\delta^{-2}} \right) n^2 \\
    & \geq \frac{\delta^{-2}}{1+\delta^{-2}} n^2 \\
    & = C_\delta n^2.
\end{split}
\]
So we directly deduce from that estimate that
\[
(c)_n \geq |c+1| C_\delta^{n-1} n!
\]
Finally, we have that 
\[
\begin{split}
\left| F(a,b,c;\zeta) - 1 \right| & = \left| \sum_{s=1}^{\pinf} \frac{(|a|)_{s} (|b|)_{s}}{|(c)_{s} s!|} \zeta^s \right| \\
& \leq \frac{m^2C_\delta}{|c+1|}\sum_{s=1}^{\pinf} \left(\frac{1-\eta}{C_\delta}\right)^s.
\end{split}
\]
So, for $\delta$ small enough, we also have that for $c \in \C$ such that, for $\delta >0$, $0>\Re(z) > \delta \Im(c)$, and uniformly with respect to the variable $\zeta \in (\eta,1-\eta)$,
\[
F(a,b,c;\zeta) = 1+o(1), ~\text{as}~ |c| \to \pinf.
\]
Now, assume that $c \in \C \backslash \bigcup_{n=0}^{\pinf} B(-n,\rho)$ such that $\Re(c)<0$ and which also satisfies, for $\delta >0$, $ \Re(c) \leq \delta \Im(c)$.
We use the connection formula given in Lemma \ref{ConneForm}:
\[
\begin{split}
& F(a,b,c;\zeta) \\
& = \frac{\Gamma\left(a-c+1\right)\Gamma\left(b-c+1\right)}{\Gamma(a+b-c+1)\Gamma(1-c)}F(a,b,a+b-c+1;1-\zeta) \\
& + \frac{\pi}{\sin(\pi c)} \frac{\Gamma\left(a-c+1\right)\Gamma\left(b-c+1\right)}{\Gamma(1-c)\Gamma(2-c)} \frac{\zeta^{1-c} (1-\zeta)^{c-b-a}}{\Gamma(a)\Gamma(b)} \\
& \times F(1-a,1-b,2-c;\zeta).
\end{split}
\]
Then, due to the assumption on $\Re(c)<0$, we can apply Stirling formula to prove that the both fractions of $\Gamma$ functions tend to 1.
Furthermore, due to the facts that $1-\zeta \in [\eta,1-\eta]$ and the location of $2-c$, we can apply our two previous results to $F(a,b,2-c;1-\zeta)$ in order to obtain that
\[
F(a,b,2-c;1-\zeta) = 1+o(1), ~\text{as}~ |c| \to \pinf.
\]
Similarly, we also have
\[
F(1-a,1-b,2-c;\zeta) = 1+o(1),~\text{as}~ |c| \to \pinf.
\]
Moreover, since $|c+n| > \delta$ for every $n$, the quantity $\frac{\pi}{\sin(\pi c)}$ is also bounded. 
The quantity $\zeta^{1-c} (1-\zeta)^{c-b-a}$ is bounded by $|1-\eta|^{-\Re(b)-\Re(a)+1}\left| \frac{1-\eta}{\eta} \right|^{\Re(c)}$, so finally we can conclude that it exists a constant $K$ such that, for every $c \in \C$ such that $\Re(c) <0$ and $c \in \C \backslash \bigcup_{n=0}^{\pinf} B(-n,\rho)$, and for every $\zeta \in [\eta,1-\eta]$, we have 
\begin{equation}
\left| F(a,b,c;\zeta) \right| \leq K, \label{Majo}
\end{equation}
and so we found an upper-bound for hypergeometric functions which is uniform in the parameter $\zeta$. Eventually, we use Proposition \ref{changement} to obtain an expression for the derivative of the hypergeometric function:
\[
\frac{\mathrm{d}}{\mathrm{d}\zeta}F(a,b,c;\zeta) = \frac{ab}{c}F(a+1,b+1,c+1;\zeta).
\]
We apply \eqref{Majo} with this new quantity, and using Stirling formula we deduce that
\[
\begin{split}
\left| \frac{ab}{c}F(a+1,b+1,c+1;\zeta) \right|
& \leq \frac{ab}{c} K\\
& \leq K'.
\end{split}
\]
We can easily generalise this result to derivatives of any order $k \in \N \cup \{0\}$:
\[
\left| \frac{\mathrm{d^k}}{\mathrm{d}\zeta^k}F(a,b,c;\zeta) \right| \leq C_k.
\qedhere
\]
\end{proof}
We can prove now that normalised hypergeometric functions (considered as function of the parameter $c$) are entire with a growth order at most one.
 \begin{lemme}
       Let $a$ and $b$ be fixed complex numbers, and $\eta >0$. Then, it exists a constant $\tilde{A}>0$ such that, for every $c \in \C$,
      \[
      \sup_{\zeta \in [\eta,1-\eta]} \left| \frac{1}{\Gamma(c)}F(a,b,c;\zeta) \right| \leq \tilde{A} e^{|c|\log|c|}.
      \]    
      \label{LemmeEntireFunction}
 \end{lemme}
 \begin{proof}
 Let $\rho>0$ such that for $c \in \C$ and for every integer $n$, we have $|c+n|>\rho$.
Using Proposition \ref{Prop1}, we have that it exists constants $A$ which are uniform with respect to the variable $\zeta$ such that
\[
\left| \frac{1}{\Gamma(c)} F(a,b,c;\zeta) \right| \leq \frac{A}{|\Gamma(c)|}.
\]
Due to the fact that $1/\Gamma$ is entire function with a growth order at most one (and maximal type) (see \cite[Chapter 1]{Levin}), it exists a constant $A'$ such that:
\[
 \frac{A}{|\Gamma(c)|} \leq A' e^{|c|\log|c|}.
\]
Then, the maximum principle on $B(-n,\rho)$ for $n$ an integer applied to the holomorphic (even entire) function $c \mapsto \frac{1}{\Gamma(c)}F(a,b,c;\zeta)$
told us that this quantity reaches its maximum on the boundary of this open set: it exists $c' \in \partial B(-n,\rho)$ such that
\[
\begin{split}
\max_{c \in B(-n,\rho)} \left|  \frac{1}{\Gamma(c)}F(a,b,c;\zeta) \right| & \leq \left|  \frac{1}{\Gamma(c')}F(a,b,c';\zeta) \right| \\
& \leq A' e^{|c'|\log|c'|}
\end{split}
\]
Since for $c \in B(-n,\rho)$, $|c-c'|<2\rho$, we have that $|c'| < |c| + 2\rho$ and so it exists a constant $\tilde{A}$ which is uniform with respect to the variable $\zeta$ such that 
\[
A' e^{|c'|\log|c'|} \leq \tilde{A}e^{|c|\log|c|}.
\]
So the quantity $c \mapsto \frac{1}{\Gamma(c)} F(a,b,c;\zeta)$ is also an entire function with a growth order at most one.
\end{proof}

\printbibliography
\end{document}

We give a new expression for the Jost function $w$, evaluating this Wronskian at $x=0$:
\begin{align}
    w(z) & = [f^-(0,z),f^+(0,z)] \notag \\
    & = f^-(0,z)(f^+)'(0,z)-(f^-)'(0,z)f^+(0,z) \notag \\
    & = f^-(0,z)(f^+)'(0,z)\left(1 - \frac{(f^-)'(0,z)f^+(0,z)}{f^-(0,z)(f^+)'(0,z)} \right) \notag \\
    & = f^-(0,z)(f^+)'(0,z)\left(1 - \frac{m_-(z)}{m_+(z)} \right),
    \label{WT}
\end{align}
with $m_+(z):= \frac{(f^+)'(0,z)}{f^+(0,z)}$ and $m_-(z):= \frac{(f^-)'(0,z)}{f^-(0,z)}$. We mention here that the two quantities $m_+$ and $m_-$ are called Weyl-Titchmarsh functions. We mention the work \cite{Kostenko} for information about the Weyl-Titchmarsh theory. Resonances are either zeros of $f^\pm(0,z)$ (or their derivatives) or zeros of the third factor in \eqref{WT}. We are going to study both of these two factors in different domains of the lower half-plane. 
\begin{defi}
    Let $S_1$ and $S_2$ be two domains of the lower half-plane defined by
    \[
    S_1:= \left\{ z \in \C^- ~|~ - |\Re(z)| \geq \delta \Im(z) \right\},
    \]
    and
    \[
    S_2:= \left\{ z \in \C^- ~|~ - |\Re(z)| < \delta \Im(z) \right\},
    \]
    for $\delta >0$ and $\C^-:= \{ z \in \C ~|~  \Im(z) <0 \}$.
    We also define a set $\mathcal{B}$ by
    \[
    \mathcal{B}:= \bigcup_{n=0}^{\pinf}B(-i(n+1),\eta),
    \]
    for $\eta>0$, and where $B(x,r)$ stands for the open ball of center $x$ and radius $r$.
    Finally, let $Z$ be the set of zeros of $f^-(0,z)(f^+)'(0,z)$ and $f^+(0,z)(f^-)'(0,z)$:
    \[
    Z:= Z_1 \cup Z_2.
    \]
    where 
    \[
    Z_1 = \left\{ z \in \C^- ~|~ f^-(0,z)(f^+)'(0,z)=0\right\}
    \]
    and
    \[
    Z_2 = \left\{ z \in \C^- ~|~ f^+(0,z)(f^-)'(0,z)=0\right\}.
    \]
\end{defi}
One can understand $Z_1$ as the domain where the quantity $m_-/m_+$ is not defined.
We start by the determination of asymptotics of the third factor in the expression \eqref{WT} of $w$.
\begin{lemme}    \label{Lemmef+}
    We have:
\[
f^+(0,z) \sim (-1)^p \frac{\partial_t^pK^+(0,2\beta)}{(iz)^{p+1}}e^{2iz\beta},~ \Im(z) \to -\infty, ~z \in \C^- \backslash \left( \mathcal{B} \cup Z \right).
\]
Let $C > 0$, we have
\[
f^+(0,z) \to 1 ,~ |\Re(z)| \to +\infty, ~ |\Im(z)| \leq  C,  ~ z \in \C^- \backslash \left( \mathcal{B} \cup Z \right).
\]
\end{lemme}

\begin{proof}
    The notations $o$, $O$ and $\sim$ holds for $z$ tending to $\infty$ and $z \in \C^- \backslash \left( \mathcal{B} \cup Z \right)$. 
    Let us start with the first statement. 
    Since 
    \[
    f^+(0,z) = f^+_0(0,z) + \int_0^{2\beta } K^+(0,t) f^+_0(t,z) \, \mathrm{d}t,
    \]
    we will show that $f^+_0(0,z) = o\left( \int_0^{2\beta } K^+(0,t) f^+_0(t,z) \, \mathrm{d}t \right)$.
    We have, after $(p+1)$ integrations by parts,
\begin{align}
& \int_0^{2\beta } K^+(0,t) f^+_0(t,z) \, \mathrm{d}t = \int_0^{2\beta } K^+(0,t) F\left(c-a,c-b,c; \frac{e^{2t}}{1+e^{2t}} \right) e^{itz} \, \mathrm{d}t\notag \\
& = \left[ \frac{e^{itz}}{iz} K^+(0,t) F\left(c-a,c-b,c; \frac{e^{2t}}{1+e^{2t}} \right) \right]^{2 \beta}_0 \notag \\
& - \int_0^{2\beta } \partial_t\left[ K^+(0,t) F\left(c-a,c-b,c; \frac{e^{2t}}{1+e^{2t}} \right)\right] \frac{e^{itz}}{iz} \, \mathrm{d}t \notag \\
& = \dots \notag \\
& = o(1) + (-1)^p \frac{\partial_t^pK^+(0,2\beta)}{(iz)^{p+1}}F\left(c-a,c-b,c;\frac{1}{1+e^{4\beta}}\right)e^{2iz\beta} \left[ 1+o(1) \right].
\label{expo}
\end{align}
Equation \eqref{2beta} tells us that $\partial_t^pK^+(0,2\beta^-) = -\frac{1}{4} q^{(p-1)}(\beta^-) \neq 0$, since we assume that the $(p-1)^{th}$ derivative of $q$ has a jump at $x = \beta$.
The second $o(1)$ corresponds to the remaining integral term after all the integrations by parts.
We are now going to use Lemma \ref{abc} to find the asymptotic of $f_0^+$ in this domain. We know that:
\[
\begin{split}
    f^+_0(0,z) & = 2^{iz}\frac{\sqrt{\pi}\Gamma\left(\frac{a}{2}+\frac{b}{2}+\frac{1}{2}\right)}{\Gamma\left(\frac{a}{2}+\frac{1}{2}\right) \Gamma\left(\frac{b}{2}+\frac{1}{2}\right)}.
\end{split}
\]
Recall that $a$, $b$ and $c$ are defined respectively in \eqref{a}, \eqref{b} and \eqref{c}.
We use now Euler's reflection formula
\[
\Gamma(1-z)\Gamma(z) = \frac{\pi}{\sin(\pi z)},~z \in \C \backslash \Z,
\]
to modify the expression of gamma function, so we will be able to use Stirling formula. To use Euler's reflection formula, we have to consider the domain with the zeros of $\frac{1}{\Gamma\left(\frac{a}{2}+\frac{1}{2}\right)}$ and $\frac{1}{\Gamma\left(\frac{b}{2}+\frac{1}{2}\right)}$ removed.
So, for such a $z$, we have
\[
\begin{split}
    & \frac{\sqrt{\pi}\Gamma\left(\frac{a}{2}+\frac{b}{2}+\frac{1}{2}\right)}{\Gamma\left(\frac{a}{2}+\frac{1}{2}\right)\Gamma\left(\frac{b}{2}+\frac{1}{2}\right)} = \frac{\sqrt{\pi}\Gamma\left(c\right)}{\Gamma\left(\frac{a}{2}+\frac{1}{2}\right)\Gamma\left(\frac{b}{2}+\frac{1}{2}\right)} \\
    & = \sqrt{\pi} \frac{\Gamma\left(\frac{1}{2}-\frac{a}{2}\right)\Gamma\left(\frac{1}{2}-\frac{b}{2}\right)\sin\left(\pi\left(\frac{a}{2}+\frac{1}{2}\right)\right)\sin\left(\pi\left(\frac{b}{2}+\frac{1}{2}\right)\right) \pi}{\Gamma(1-c) \pi^2 \sin(\pi c)} \\
    & = \frac{1}{\sqrt{\pi}}\frac{\sin\left(\pi\left(\frac{a}{2}+\frac{1}{2}\right)\right)\sin\left(\pi\left(\frac{b}{2}+\frac{1}{2}\right)\right)}{\sin(\pi c)} \frac{\Gamma\left(\frac{1}{2}-\frac{a}{2}\right)\Gamma\left(\frac{1}{2}-\frac{b}{2}\right)}{\Gamma(1-c)}.
\end{split}
\]
Then, we apply Stirling formula
\[
\Gamma(z) \sim (2\pi z)^{\frac{1}{2}} e^{-z} z^z, |\text{Arg}(z)| \leq \pi - \varepsilon, \varepsilon>0,
\]
on the last term:
\[
\begin{split}
    \frac{\Gamma\left(\frac{a}{2}+\frac{b}{2}+\frac{1}{2}\right)}{\Gamma\left(\frac{1}{2}-\frac{a}{2}\right)\Gamma\left(\frac{1}{2}-\frac{b}{2}\right)} & \sim \frac{\sqrt{2\pi\left(\frac{1}{2}-\frac{a}{2}\right)}\sqrt{2\pi\left(\frac{1}{2}-\frac{b}{2}\right)}}{\sqrt{2\pi\left(1-c\right)}} \\
    & \times \frac{e^{-\left(\frac{1}{2}-\frac{a}{2}\right)}e^{-\left(\frac{1}{2}-\frac{b}{2}\right)}}{e^{-(1-c)}} \times \frac{\left(\frac{1}{2}-\frac{a}{2}\right)^{\frac{1}{2}-\frac{a}{2}}\left(\frac{1}{2}-\frac{b}{2}\right)^{\frac{1}{2}-\frac{b}{2}}}{(1-c)^{1-c}}.
\end{split}
\]
The first two terms are $O(\sqrt{iz})$. Consider the last one, we have:
\[
\begin{split}
    \frac{\left(\frac{1}{2}-\frac{a}{2}\right)^{\frac{1}{2}-\frac{a}{2}}\left(\frac{1}{2}-\frac{b}{2}\right)^{\frac{1}{2}-\frac{b}{2}}}{(1-c)^{1-c}} & = \frac{\left(\frac{1}{2}-\frac{a}{2}\right)^{\frac{1}{2}}\left(\frac{1}{2}-\frac{b}{2}\right)^{\frac{1}{2}}}{(1-c)}\times\frac{\left(\frac{1}{2}-\frac{a}{2}\right)^{-\frac{a}{2}}\left(\frac{1}{2}-\frac{b}{2}\right)^{-\frac{b}{2}}}{(1-c)^{-c}}.
\end{split}
\]
The first term is also $O(\sqrt{iz})$. Then
\[
\begin{split}
    & \frac{\left(\frac{1}{2}-\frac{a}{2}\right)^{-\frac{a}{2}}\left(\frac{1}{2}-\frac{b}{2}\right)^{-\frac{b}{2}}}{(1-c)^{-c}} \\
    & = \frac{e^{(1-iz)\log(iz)}}{e^{\left(\frac{1}{4}-\frac{iz}{2}+\frac{\sqrt{\frac{1}{4}-\lambda}}{2}\right)\log\left(\frac{1}{4}+\frac{iz}{2}-\frac{\sqrt{\frac{1}{4}-\lambda}}{2}\right)}e^{\left(\frac{1}{4}-\frac{iz}{2}-\frac{\sqrt{\frac{1}{4}-\lambda}}{2}\right)\log\left(\frac{1}{4}+\frac{iz}{2}+\frac{\sqrt{\frac{1}{4}-\lambda}}{2}\right)}} \\
    & = \frac{e^{\log(iz)}}{e^{\left(\frac{1}{4}+\frac{\sqrt{\frac{1}{4}-\lambda}}{2}\right)\log\left(\frac{1}{4}+\frac{iz}{2}-\frac{\sqrt{\frac{1}{4}-\lambda}}{2}\right)}e^{\left(\frac{1}{4}-\frac{\sqrt{\frac{1}{4}-\lambda}}{2}\right)\log\left(\frac{1}{4}+\frac{iz}{2}+\frac{\sqrt{\frac{1}{4}-\lambda}}{2}\right)}} \\
    & \times \frac{e^{-iz\log(iz)}}{e^{\frac{-iz}{2}\log\left(\frac{1}{4}+\frac{iz}{2}-\frac{\sqrt{\frac{1}{4}-\lambda}}{2}\right)}e^{\frac{-iz}{2}\log\left(\frac{1}{4}+\frac{iz}{2}-\frac{\sqrt{\frac{1}{4}+\lambda}}{2}\right)}}.
\end{split}
\]
The first term of this product is still a $O(\sqrt{iz})$. For the second one:
\[
\begin{split}
     \frac{e^{-iz\log(iz)}}{e^{\frac{-iz}{2}\log\left(\frac{1}{4}+\frac{iz}{2}-\frac{\sqrt{\frac{1}{4}-\lambda}}{2}\right)}e^{\frac{-iz}{2}\log\left(\frac{1}{4}+\frac{iz}{2}-\frac{\sqrt{\frac{1}{4}+\lambda}}{2}\right)}} \sim K e^{{iz} \log(\frac{1}{2})} \to 0, z \to \infty.
\end{split}
\]
Furthermore, we have
\[
\frac{\sin\left(\pi\left(\frac{a}{2}+\frac{1}{2}\right)\right)\sin\left(\pi\left(\frac{b}{2}+\frac{1}{2}\right)\right)}{\sin(\pi c)}  = \frac{\cos(\pi\sqrt{\frac{1}{4} - \lambda}) + \sin(iz\pi)}{2\sin(iz\pi)}.
\]
So $f^+_0(0,z)$ has at most a polynomial growth at infinity in this domain,
\begin{equation}
    f^+_0(0,z) = O(z^m),~ m \in \N,
    \label{poly}
\end{equation}
and zeros when $z$ is such that $\frac{a+1}{2}$ or $\frac{b+1}{2}$ are equal to negative integers. And so, due to the exponential growth of \eqref{expo} and the polynomial growth of $\eqref{poly}$ we have
\[
\begin{split}
f^+(0,z) & \sim \int_0^{2\beta } K^+(0,t) f^+_0(t,z) \, \mathrm{d}t \\
& \sim (-1)^p \frac{\partial_t^pK^+(0,2\beta)}{(iz)^{p+1}}e^{2iz\beta}.
\end{split}
\]
We now consider the second statement. The integral term tends to $0$ as $|\Re(z)|$ tends to $+\infty$ using \eqref{expo} and the fact that the imaginary part is bounded. Furthermore, using Proposition \ref{Asymptot} and formula \eqref{HypGeoLim}, we have that $f^+_0(0,z) \to 1$ when $|\Re(z)|$ tends to $+\infty$. This gives the wanted results.
\end{proof}
We obtain similar asymptotics for $f^-$:
\begin{lemme}
    We have:
\[
f^-(0,z) \sim (-1)^r \frac{\partial_t^rK^-(0,2\alpha)}{(-iz)^{r+1}}e^{-2iz\alpha},~ \Im(z) \to -\infty, ~z \in \C^- \backslash \left( \mathcal{B} \cup Z \right).
\]
Let $C > 0$, we have:
\[
f^-(0,z) \to 1 ,~ |\Re(z)| \to +\infty, ~ |\Im(z)| \leq  C,  ~ z \in \C^- \backslash \left( \mathcal{B} \cup Z \right).
\]
    \label{Lemmef-}
\end{lemme}
The asymptotics introduced in Lemmas \ref{Lemmef+} (\ref{Lemmef-}, respectively) holds for $z$ which is not a zero of $f^+(0,z)$ or $(f^+)'(0,z)$ ($f^-(0,z)$ or $(f^-)'(0,z)$, respectively). We need to know where those zeros are located in order to know where these asymptotics are valid.
\begin{lemme}
    The set $Z$ is bounded.
    \label{Z}
\end{lemme}
\begin{proof}
    The set $Z$ is composed of the zeros of four functions: $f^+(0,z)$, $(f^+)'(0,z)$, $f^-(0,z)$ and $(f^-)'(0,z)$. We are going to prove that the set of the zeros of each function is bounded. We detail only the case of $f^+(0,z)$, due to the fact that they are similar. Assume that $(z_n)_n \subset \C^-$ is a sequence of zeros of $f^+(0,z)$ such that $|z_n| \to \pinf$. Firstly, we assume that $\Im(z_n) \to \minf$. Then, we have 
    \[
    f^+_0(0,z_n) = - \int_0^{2\beta} K^+(0,t)f_0^+(t,z_n) \, \mathrm{d}t.
    \]
    If $f^+_0(0,z_n) \neq 0$ for $n$ large enough, then we have
    \[
    \frac{- \int_0^{2\beta} K^+(0,t)f_0^+(t,z_n) \, \mathrm{d}t.}{f^+_0(0,z_n)} = 1.
    \]
    Taking the limit when $n$ tends to $\pinf$ shows a contradiction, due to the fact that the numerator has an exponential growth in \eqref{expo}, and the denominator a polynomial growth in \eqref{poly}.
    Now, if $f^+_0(0,z_n) = 0$ for every $n$ large enough, then necessarily the quantity in \eqref{expo} is equal to zero: it is impossible, considering the fact that 
    \[
    (-1)^p \frac{\partial_t^pK^+(0,2\beta)}{(iz_n)^{p+1}}F\left(c-a,c-b,1-iz_n;\frac{1}{1+e^{4\beta}}\right)e^{2iz_n\beta}
    \]
    tends to $\infty$ when $n$ tends to $\pinf$.
    Now, assume that there exists a constant $C$ such that $|\Im(z_n)| \leq C$ and $|\Re(z_n)| \to \pinf$.
    Then, we proved in Lemma \ref{Lemmef+} that the integral term tends to 0. Since $f_0^+(0,z_n)$ tends to $1$ (and so has no zero) in this domain of the complex plane, this is a contradiction with the existence of such a sequence of $(z_n)_n$.
    So, there is no sequence of zero $(z_n)_n$ such that $|z_n| \to \pinf$, \textit{i.e.} the set of the zeros of $f^+(0,z)$ is bounded. Similar reasoning on $(f^+)'(0,z)$, $f^-(0,z)$ and $(f^-)'(0,z)$ shows that $Z$ is bounded.
\end{proof}
Thank to Lemma \ref{Z}, we can remove the set $Z$ from the previous asymptotics of Lemmas \ref{Lemmef+} and \ref{Lemmef-}, since they are established for $z$ tending to $\infty$ in the lower half-plane and $Z$ is bounded.

Furthermore, using these two Lemmas  \eqref{Lemmef+} and \eqref{Lemmef-}, we can determine asymptotics for $\frac{m_-}{m_+}$:
\begin{lemme}
     We assume that $q$ is compactly supported, with a support satisfying $ 0 \in (\alpha,\beta)$. We have:
    \[
    \frac{m_-(z)}{m_+(z)} \to -1,~ |z| \to \infty, ~ z \in \C^- \backslash \mathcal{B} .
    \]
\end{lemme}
\begin{proof}
The notations $o$ and $\sim$ holds for $z$ tending to $\infty$ in $\C^- \backslash \mathcal{B}$. We first assume that $\Im(z) \to -\infty$. 
Using result of Lemma \ref{Lemmef+}, we can prove that
\[
f^+(0,z) \sim (-1)^p \frac{\partial_t^pK^+(0,2\beta)}{(iz)^{p+1}}e^{2iz\beta}.
\]
Similarly to what we did in Lemma \ref{Lemmef+}, we have:
\[
(f^+)'(0,z) \sim (-1)^{p-1} \frac{\partial_t^{p-1}\partial_xK^+(0,2\beta)}{(iz)^{p}}e^{2iz\beta}.
\]
So the function $m_+$ is equivalent to
\[
\begin{split}
m_+(z) & \sim \frac{(-1)^{p-1} \frac{\partial_t^{p-1}\partial_xK^+(0,2\beta)}{(iz)^{p}}e^{2iz\beta}}{(-1)^p \frac{\partial_t^pK^+(0,2\beta)}{(iz)^{p+1}}e^{2iz\beta}} \\
& \sim - \frac{\partial_t^{p-1}\partial_xK^+(0,2\beta)\times iz}{\partial_t^pK^+(0,2\beta)} \\
& \sim -iz,~\Im(z) \to \infty,~ z \in \C^- \backslash \mathcal{B}.
\end{split}
\]
since $\partial_t^{p-1}\partial_xK^+(0,2\beta) = \partial_t^pK^+(0,2\beta)$ (see the proof of \eqref{2beta}). 
Using the same proof, and Lemma \ref{Lemmef-}, we have the result for $m_-$:
\[
\begin{split}
m_-(z) & \sim \frac{(-1)^{r-1} \frac{\partial_t^{r-1}\partial_xK^+(0,2\beta)}{(-iz)^{r}}e^{-2iz\alpha}}{(-1)^{r} \frac{\partial_t^r K^+(0,2\beta)}{(-iz)^{r+1}}e^{-2iz\alpha}} \\
& \sim \frac{\partial_t^{r-1}\partial_xK^-(0,2\alpha)\times iz}{\partial_t^rK^-(0,2\alpha)} \\
& \sim iz,~ \Im(z) \to \infty,~ z \in \C^- \backslash \mathcal{B}.
\end{split}
\]
Then, combining these two equivalents:
\[
 \frac{m_-(z)}{m_+(z)} \to -1, ~\Im(z) \to \infty,~ z \in \C^- \backslash \mathcal{B},
\]
and so the result is proved. Now, if $\Re(z) \to \infty$, we have
\[
f^+(0,z) \sim 1
\]
Similarly to what we did in Lemma \ref{Lemmef+}, we have:
\[
(f^+)'(0,z) \sim iz.
\]
So the quantity $m_+$ is equivalent to
\[
m_+(z) \sim iz,~ \Re(z) \to \infty,~ z \in \C^- \backslash \mathcal{B}.
\]
Similarly, 
\[
m_-(z) \sim -iz,~ \Re(z) \to \infty,~ z \in \C^- \backslash \mathcal{B},
\]
and so 
\[
\frac{m_-(z)}{m_+(z)} \to -1, ~\Re(z) \to \infty,~ z \in \C^- \backslash \mathcal{B},
\]
and so the result is proved when $|z| \to \infty$.
\end{proof}
We mention here that in the case of an even perturbation $q$, we immediately have $\frac{m_-(z)}{m_+(z)}=-1$, and so $w(z) = 2f^+(0,z)(f^+)'(0,z)$.
Now, we can asymptotically locate resonances, firstly in $S_1$:
\begin{prop}
Let $V$ be a potential defined by $V(x)  = \frac{\lambda}{\cosh^2(x)}+q(x)$, for all $x \in \R$ with $q$ real-valued, integrable and its support included in $[\alpha,\beta]$. Furthermore, we assume that $0 \in (\alpha,\beta)$ and $q$ satisfies Assumption \ref{Asum}.
    For $z \in S_1\backslash \mathcal{B}$, asymptotically, the function $w$ has no zero in this domain.
    \label{Nozero}
\end{prop}
\begin{proof}
We consider the domain $S_1 \backslash \mathcal{B}$ instead of $S_1$ to be away from the poles of hypergeometric functions.
The term $1-\frac{m_-(z)}{m_+(z)}$ tends to 2, so it will not vanish for sufficiently large values of $z$.
Let now consider $z_0$ a resonance of $w_0$, outside a sufficiently large disc centred at 0. Let $z$ be a complex number on the boundary of $\left( S_1\backslash \mathcal{B} \right) \cap \overline{D(z_0,\varepsilon)}$ with $\varepsilon>0$, denoted $\partial\left( \left( S_1\backslash \mathcal{B} \right) \cap \overline{D(z_0,\varepsilon)}\right)$. We will now recall two results: the term $\int_{2\alpha}^{0} K^-(0,t) f^-_0(t,z) \, \mathrm{d}t$ has an exponential growth with \eqref{expo}, and the term $f^-(0,z)$ has a polynomial growth with \eqref{poly} (these two results \eqref{expo} and \eqref{poly} are stated for $f^+$, but they still hold for $f^-$, up to obvious modifications). Then, for $z$ large enough (and so for $z_0$ large enough) we have the following estimate:
\[
    \begin{split}
    \left| f^-(0,z) - \int_{2 \alpha}^{0} K^-(0,t) f^-_0(t,z) \, \mathrm{d}t \right| & = \left| f^-_0(0,z)\right| \\
    & < \left| \int_{2 \alpha}^{0} K^-(0,t) f^-_0(t,z) \, \mathrm{d}t \right|.
    \end{split}
    \]
    We can apply Rouché's theorem on the two meromorphic functions $f^-(0,z)$ and $\int_{2 \alpha}^{0} K^-(0,t) f^-_0(t,z) \, \mathrm{d}t$, seen as functions of the complex parameter $z$. The dominant function has no zero asymptotically on $\overline{D(z_0,\varepsilon)} \cap \left( S_1 \backslash \mathcal{B} \right)$ for $z_0$ a zero of $w_0$,
then $f^-(0,z)$ has no zero asymptotically on this set. The same reasoning for $(f^+)'(0,z)$ shows that this function has no zero in this domain neither. Using \eqref{WT}, we conclude that $w$ has no zero asymptotically on the set $S_1 \backslash \mathcal{B}$.
\end{proof}

We are now interested in the location of resonances in the domain $S_2$:

\begin{prop}
    Let $V$ be a potential defined by $V(x)  = \frac{\lambda}{\cosh^2(x)}+q(x)$, for all $x \in \R$ with $q$ real-valued, integrable and its support included in $[\alpha,\beta]$. Furthermore, we assume that $0 \in (\alpha,\beta)$ and $q$ satisfies Assumption \ref{Asum}.
    In the domain $S_2$, the resonances $\beta_j$ for $j \in \N$ satisfy the asymptotics:
    \[
    \begin{split}
    \beta_{\pm j} & = \pm \frac{\pi}{2(\beta - \alpha)} \left( 2j + \frac{p+r}{2} \pm \frac{\left( 1-sign(A) \right)}{2} \right) \\
    & - \frac{i(p+r)}{2(\beta - \alpha)} \log \left( \frac{j \pi}{\beta - \alpha}\right) + \frac{i}{2(\beta - \alpha)} \log|A| +o(1),
    \end{split}
    \]
    for $A = {(-1)^p \partial_t^{r-1} \partial_x K^-(0,2\alpha) \times \partial_t^{p}K^+(0,2\beta)}$ a real constant. The notation $o(1)$ holds for $j$ tending to $\infty$.
    \label{LogBranches}
\end{prop}

\begin{proof}
    The notations $o$ and $\sim$ holds for $z$ tending to $\infty$ in $S_2$ 
     Since for any $z \in S_2$, $\zeta=\frac{1}{1+e^{4\beta}}$, $\zeta = \frac{1}{2}$ or $\zeta = \frac{e^{4\alpha}}{1+e^{4\alpha}}$ satisfy assumptions $2$ or $3$ of Proposition \ref{Asymptot}, we can use the asymptotic \eqref{HypGeoLim} (see Appendix). 
     Now, after $p+1$ integrations by parts of kernel $K^+$ (as we did in the proof of Proposition \ref{Nozero}),
     we have an expression for $f^+(0,z)$for $z \in S_2$ , which is the one presented in the proof of Lemma \ref{Lemmef+}:
    \[
    \begin{split}
    & f^+(0,z) = f^+_0(0,z) + o(1) \\
    & + (-1)^p \frac{\partial_t^pK^+(0,2\beta)}{(iz)^{p+1}}F\left(c-a,c-b,c;\frac{1}{1+e^{4\beta}}\right)e^{2iz\beta} \left[ 1+o(1) \right].
    \end{split}
    \]
    Similarly to what we have done with $f^+$, we can do the same thing with $(f^+)'(0,z)$ to obtain the following expression:
    \[
    \begin{split}
    & (f^+)'(0,z) = izf^+_0(0,z) + o(1) \\
    & + (-1)^{p-1} \frac{\partial_t^{p-1}\partial_xK^+(0,2\beta)}{(iz)^{p}}F\left(c-a,c-b,c;\frac{1}{1+e^{4\beta}}\right)e^{2iz\beta} \left[ 1+o(1) \right].
    \end{split}
    \]
    We give here the two expressions of $f^-(0,z)$ and $(f^-)'(0,z)$:
    \[
    \begin{split}
   & f^-(0,z) = f^-_0(0,z) + o(1) \\
    & + (-1)^{r+1} \frac{\partial_t^pK^+(0,2\alpha)}{(-iz)^{r+1}}F\left(c-a,c-b,c;\frac{e^{4\alpha}}{1+e^{4\alpha}}\right)e^{-2iz\alpha} \left[ 1+o(1) \right],
    \end{split}
    \]
    and
    \[
    \begin{split}
    & (f^-)'(0,z) = -izf^-_0(0,z) + o(1) \\
    & + (-1)^r \frac{\partial_t^{r-1}\partial_xK^+(0,2\alpha)}{(-iz)^{r}}F\left(c-a,c-b,c;\frac{e^{4\alpha}}{1+e^{4\alpha}}\right)e^{-2iz\alpha} \left[ 1+o(1) \right].
    \end{split}
    \]
    Now, we can use two of these four different writings to obtain expressions of $f^-(0,z)(f^+)'(0,z)$. For $z \in S_2$, we have:
   \[
    \begin{split}
        & f^-(0,z)(f^+)'(0,z) = 2iz F\left(c-a,c-b,c;\frac{1}{2}\right)^2 F\left(c-a,c-b,c;\frac{1}{1+e^{4\beta}}\right)\\
        & \times F\left(c-a,c-b,c;\frac{e^{4\alpha}}{1+e^{4\alpha}}\right) \left[ 1 + \underset{|z| \to \pinf}{o(1)} - \frac{Ae^{2iz(\beta - \alpha)}}{(iz)^{p+r}} \left( 1 +  \underset{|z| \to \pinf}{o(1)} \right)\right]. \\
    \end{split}
    \]
    The same reasoning applied to $f^+(0,z)(f^-)'(0,z)$ gives
        \[
    \begin{split}
        & f^+(0,z)(f^-)'(0,z) = -2iz F\left(c-a,c-b,c;\frac{1}{2}\right)^2 F\left(c-a,c-b,c;\frac{1}{1+e^{4\beta}}\right)\\
        & \times F\left(c-a,c-b,c;\frac{e^{4\alpha}}{1+e^{4\alpha}}\right) \left[ 1 + \underset{|z| \to \pinf}{o(1)} - \frac{Ae^{2iz(\beta - \alpha)}}{(iz)^{p+r}} \left( 1 +  \underset{|z| \to \pinf}{o(1)} \right)\right] \\
    \end{split}
    \]
    where
    \[
    \begin{split}
    A & = (-1)^p \partial_t^{r-1} \partial_x K^-(0,2\alpha) \times \partial_t^{p}K^+(0,2\beta) \\
    & = (-1)^p \partial_t^{r}K^-(0,2\alpha) \times \partial_t^{p-1}\partial_x K^+(0,2\beta),
    \end{split}
    \]
    since $\partial_t^{p-1}\partial_xK^+(0,2\beta) =\partial_t^pK^+(0,2\beta)$ and $\partial_t^{r-1}\partial_xK^-(0,2\alpha) =\partial_t^rK^+(0,2\alpha)$.
    From these two expressions, we can conclude that the two quantities $f^+(0,z)(f^-)'(0,z)$ and $f^-(0,z)(f^+)'(0,z)$ have the same zeros, \textit{i.e.} $Z_1 = Z_2$.
    Furthermore, we have an expression of $w$ on the complex domain $\C^-$, and so on $S_2$:
    \[
    \begin{split}
    w(z)& = 4iz F\left(c-a,c-b,c;\frac{1}{2}\right)^2 F\left(c-a,c-b,c;\frac{1}{1+e^{4\beta}}\right)\\
        & \times F\left(c-a,c-b,c;\frac{e^{4\alpha}}{1+e^{4\alpha}}\right) \left[ 1 + \underset{|z| \to \pinf}{o(1)} - \frac{Ae^{2iz(\beta - \alpha)}}{(iz)^{p+r}} \left( 1 +  \underset{|z| \to \pinf}{o(1)} \right)\right]. \\
    \end{split}
    \]
    Then, using Hardy \cite[422-423]{Hardy} and Cartwright \cite{Cartwright1,Cartwright2} method presented in \cite{BBD,ZWORSKI1987277},
    the zeros of the fourth factor in the previous expression of $w$
    \[
    1 + \underset{|z| \to \pinf}{o(1)} - \frac{Ae^{2iz(\beta - \alpha)}}{(iz)^{p+r}} \left( 1 +  \underset{|z| \to \pinf}{o(1)} \right)
    \]
    are located in the lower half-plane and have the following asymptotics:
    \[
    \begin{split}
    \beta_{\pm j} & = \pm \frac{\pi}{2(\beta - \alpha)} \left( 2j + \frac{p+r}{2} \pm \frac{\left( 1-sign(A) \right)}{2} \right) \\
    & - \frac{i(p+r)}{2(\beta - \alpha)} \log \left( \frac{j \pi}{\beta - \alpha}\right) + \frac{i}{2(\beta - \alpha)} \log|A| +o(1).
    \end{split}
    \]
    We are now going to use Rouché's theorem. Indeed, we know the location of zeros of $f^-(0,z)(f^+)'(0,z)$. But we also know that for such points $\beta_{\pm j}$, the function $w$ cannot be written $w(z) = f^-(0,z)(f^+)'(0,z)\left( 1 - \frac{m_-(z)}{m_+(z)} \right)$. So we are going to prove that, the zeros of $w$ are arbitrarily close to the zeros of $f^-(0,z)(f^+)'(0,z)$, the $\beta_{\pm j}$, for sufficiently large $j$. 
    Considering the term $\left( 1 - \frac{m_-(z)}{m_+(z)} \right)$: in the proof of Lemma \ref{Lemmef+}, we have shown that
    \[
    \begin{split}
    f^+(0,z) & = f^+_0(0,z) + o(1) \\
    & + (-1)^p \frac{\partial_t^pK^+(0,2\beta)}{(iz)^{p+1}}F\left(c-a,c-b,c;\frac{1}{1+e^{4\beta}}\right)e^{2iz\beta} \left[ 1+o(1) \right].
    \end{split}
    \]
    Due to the asymptotic presented in Proposition \ref{Asymptot}, we have that $f_0^+(0,z) \to 1$ in when $|z| \to \pinf$ in $S_2$.
    Similarly to what we have done with $f^+$, we can do the same thing with $(f^+)'(0,z)$ to obtain the following expression:
    \[
    \begin{split}
    (f^+)'(0,z) & = izf^+_0(0,z) + o(1) \\
    & + (-1)^p \frac{\partial_t^{p-1}\partial_xK^+(0,2\beta)}{(iz)^{p}}F\left(c-a,c-b,c;\frac{1}{1+e^{4\beta}}\right)e^{2iz\beta} \left[ 1+o(1) \right].
    \end{split}
    \]
    We give here the two expressions of $f^-(0,z)$ and $(f^-)'(0,z)$:
    \[
    \begin{split}
    f^-(0,z) & = f^-_0(0,z) + o(1) \\
    & + (-1)^{r+1} \frac{\partial_t^pK^+(0,2\alpha)}{(-iz)^{r+1}}F\left(c-a,c-b,c;\frac{e^{4\alpha}}{1+e^{4\alpha}}\right)e^{-2iz\alpha} \left[ 1+o(1) \right].
    \end{split}
    \]
    and
    \[
    \begin{split}
    (f^-)'(0,z) & = -izf^-_0(0,z) + o(1) \\
    & + (-1)^r \frac{\partial_t^{r-1}\partial_xK^+(0,2\alpha)}{(-iz)^{r}}F\left(c-a,c-b,c;\frac{e^{4\alpha}}{1+e^{4\alpha}}\right)e^{-2iz\alpha} \left[ 1+o(1) \right].
    \end{split}
    \]
    It comes from these four writings:
    \[
    \begin{split}
    & m_+(z) = \frac{(f^+)'(0,z)}{f^+(0,z)}\\
    & \sim \frac{izf^+_0(0,z)+ (-1)^p \frac{\partial_t^{p-1}\partial_xK^+(0,2\beta)}{(iz)^{p}}F\left(c-a,c-b,c;\frac{1}{1+e^{4\beta}}\right)e^{2iz\beta}}{f^+_0(0,z) + (-1)^p \frac{\partial_t^pK^+(0,2\beta)}{(iz)^{p+1}}F\left(c-a,c-b,c;\frac{1}{1+e^{4\beta}}\right)e^{2iz\beta}} \\
    & = iz \frac{f^+_0(0,z) + (-1)^{p-1} \frac{\partial_t^{p-1}\partial_xK^+(0,2\beta)}{(iz)^{p+1}}F\left(c-a,c-b,c;\frac{1}{1+e^{4\beta}}\right)e^{2iz\beta}}{f^+_0(0,z) + (-1)^p \frac{\partial_t^pK^+(0,2\beta)}{(iz)^{p+1}}F\left(c-a,c-b,c;\frac{1}{1+e^{4\beta}}\right)e^{2iz\beta}} \\
    & = iz \frac{f^+_0(0,z) - (-1)^{p} \frac{\partial_t^{p-1}\partial_xK^+(0,2\beta)}{(iz)^{p+1}}F\left(c-a,c-b,c;\frac{1}{1+e^{4\beta}}\right)e^{2iz\beta}}{f^+_0(0,z) + (-1)^p \frac{\partial_t^pK^+(0,2\beta)}{(iz)^{p+1}}F\left(c-a,c-b,c;\frac{1}{1+e^{4\beta}}\right)e^{2iz\beta}} \\
    & \sim iz \frac{1 - (-1)^{p} \frac{\partial_t^{p-1}\partial_xK^+(0,2\beta)}{(iz)^{p+1}}e^{2iz\beta}}{1 + (-1)^p \frac{\partial_t^pK^+(0,2\beta)}{(iz)^{p+1}}e^{2iz\beta}}. \\
    \end{split}
    \]
    The same calculations give an equivalent for $m_-$:
    \[
    \begin{split}
        & f^-(0,z)(f^+)'(0,z) = 2iz F\left(c-a,c-b,c;\frac{1}{2}\right)^2 F\left(c-a,c-b,c;\frac{1}{1+e^{4\beta}}\right)\\
        & \times F\left(c-a,c-b,c;\frac{e^{4\alpha}}{1+e^{4\alpha}}\right) \left[ 1 + \underset{|z| \to \pinf}{o(1)} - \frac{Ae^{2iz(\beta - \alpha)}}{(iz)^{p+r}} \left( 1 +  \underset{|z| \to \pinf}{o(1)} \right)\right] \\
    \end{split}
    \]
    and 
        \[
    \begin{split}
        & f^+(0,z)(f^-)'(0,z) = -2iz F\left(c-a,c-b,c;\frac{1}{2}\right)^2 F\left(c-a,c-b,c;\frac{1}{1+e^{4\beta}}\right)\\
        & \times F\left(c-a,c-b,c;\frac{e^{4\alpha}}{1+e^{4\alpha}}\right) \left[ 1 + \underset{|z| \to \pinf}{o(1)} - \frac{Ae^{2iz(\beta - \alpha)}}{(iz)^{p+r}} \left( 1 +  \underset{|z| \to \pinf}{o(1)} \right)\right] \\
    \end{split}
    \]
    From these two expressions, we can conclude that $f^+(0,z)(f^-)'(0,z)$ and $f^-(0,z)(f^+)'(0,z)$ have the same zeros, \textit{i.e.} $Z_1 = Z_2$.
    Furthermore, we have an expression of $w$ on the complex domain $\C^-$:
    \[
    \begin{split}
    w(z)& = 4iz F\left(c-a,c-b,c;\frac{1}{2}\right)^2 F\left(c-a,c-b,c;\frac{1}{1+e^{4\beta}}\right)\\
        & \times F\left(c-a,c-b,c;\frac{e^{4\alpha}}{1+e^{4\alpha}}\right) \left[ 1 + \underset{|z| \to \pinf}{o(1)} - \frac{Ae^{2iz(\beta - \alpha)}}{(iz)^{p+r}} \left( 1 +  \underset{|z| \to \pinf}{o(1)} \right)\right] \\
    \end{split}
    \]
    Since $\partial_t^{p-1}\partial_xK^+(0,2\beta) =\partial_t^pK^+(0,2\beta)$, we have

    Let $r$ be a positive real number and $K:= \overline{B(\beta_{\pm j},r)}$, $j \in \N$. If $j$ is sufficiently large, then for $z \in \partial K$:
    \[
    \begin{split}
        & \left| w(z) - 2f^-(0,z)(f^+)'(0,z) \right| \\
        & = |f^-(0,z)(f^+)'(0,z)| \left| 1 - \frac{m_-(z)}{m_+(z)} - 2 \right| < |2f^-(0,z)(f^+)'(0,z)|.
    \end{split}
    \]
    We have used here the fact that $2$ is the limit of $ 1 - \frac{m_-(z)}{m_+(z)}$ when $|z| \to \infty$ and $z \in S_2$.
\end{proof}

\begin{figure}[H]
\centering
\begin{tikzpicture}[scale=0.85]
    \filldraw[draw=black,fill=green!20]
    (-1.5,-5) -- (0,3) -- (1.5,-5) -- cycle;
    \filldraw[draw=black,fill=blue!20]
    (-5,3) -- (0,3) -- (-1.5,-5) -- (-5,-5) -- cycle;
    \filldraw[draw=black,fill=blue!20]
    (5,3) -- (0,3) -- (1.5,-5) -- (5,-5) -- cycle;
    \draw[step=1cm, gray, very thin] (-5, -5) grid (5, 5);
    \draw[very thick, ->] (-4.5, 3) -- (4.5, 3) node[below]{$\Re(z)$};
    \draw[very thick, ->] (0, -4.5) -- (0, 4.5) node[left]
    {$\Im(z)$} ;
    \filldraw[red] (0.866,-1.5) circle (2pt);
    \filldraw[red] (-0.866,-1.5) circle (2pt);
    \filldraw[red] (0.866,-2.5) circle (2pt);
    \filldraw[red] (-0.866,-2.5) circle (2pt);
    \filldraw[red] (0.866,-3.5) circle (2pt);
    \filldraw[red] (-0.866,-3.5) circle (2pt);
    \filldraw[red] (0.866,-4.5) circle (2pt);
    \filldraw[red] (-0.866,-4.5) circle (2pt);
    \filldraw[black] (3.5,0) circle (0pt) node[above]{$S_2$};
    \filldraw[black] (-3.5,0) circle (0pt) node[above]{$S_2$};
    \filldraw[black] (0,-4.5) circle (0pt) node[right]{$S_1$};
    \filldraw[black] (1.2,-1.98060) circle (2pt);
    \filldraw[black] (-1.2,-1.98060) circle (2pt);
    \filldraw[black] (2.2,-2.19612) circle (2pt);
    \filldraw[black] (-2.2,-2.19612) circle (2pt);
    \filldraw[black] (3.2,-2.49715) circle (2pt);
    \filldraw[black] (-3.2,-2.49715) circle (2pt);
    \filldraw[black] (4.2,-2.67324) circle (2pt);
    \filldraw[black] (-4.2,-2.67324) circle (2pt);
    \draw[black] (0,0) circle (0.4) node[right]{$\mathcal{B}$};
    \draw[black] (0,-1) circle (0.4) node[right]{$\mathcal{B}$};
    \draw[black] (0,-2) circle (0.4) node[right]{$\mathcal{B}$};
    \draw[black] (0,-3) circle (0.4) node[right]{$\mathcal{B}$};
    \draw[black] (0,-4) circle (0.4) node[right]{$\mathcal{B}$};
\end{tikzpicture}
\captionsetup{justification=centering}
\caption{Sketch of location of resonances:
\\ in the Pöschl-Teller case in red, and with a perturbation in black (asymptotically) such that $0 \in (\alpha,\beta)$ and $\lambda = 1$.}
\label{Dessin2}
\end{figure}
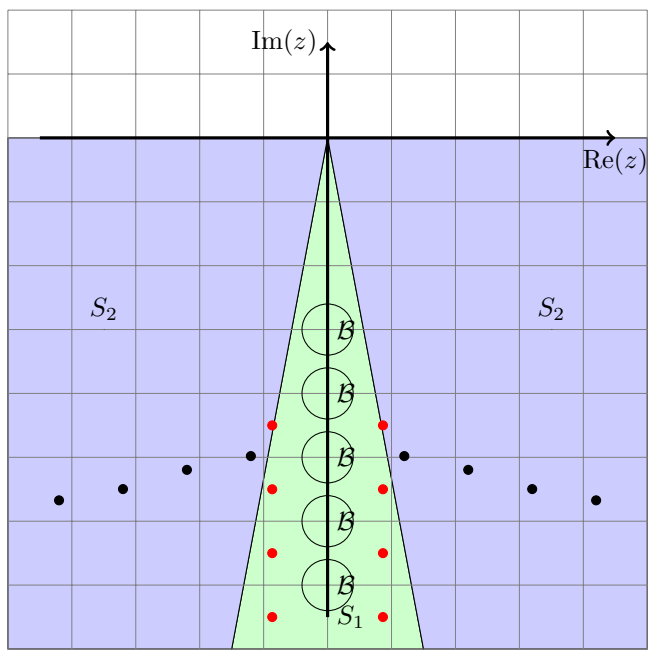

Finally, assume that $0 \notin (\alpha,\beta)$. Then we suppose that $\supp{q} \subset \R_+$. We have that $f^-(0,z) = f^-_0(0,z)$, because the integral term vanishes.
So
\[
m_-(z) \sim {-iz} \tan(\frac{\pi}{2}a)\tan(\frac{\pi}{2}b),~ z \to \infty ,~ \Im(z) < 0.
\]
Then, we have to adapt the results of Propositions \ref{Nozero} and \ref{LogBranches} to this new asymptotic. We reformulate here Proposition \ref{Nozero}:
\begin{prop}
Let $V$ be a potential defined by $V(x)  = \frac{\lambda}{\cosh^2(x)}+q(x)$, for all $x \in \R$ with $q$ real-valued, integrable and its support included in $[\alpha,\beta]$. Furthermore, we assume that $0 \notin (\alpha,\beta)$ and $q$ satisfies Assumption \ref{Asum}.
    For $z \in S_1\backslash \mathcal{B}$, the resonances $\alpha_j$ for $j \in \N$ satisfy the asymptotics:
    \[
    \alpha_{\pm j } \sim -i \left(2 j - 1 \pm \sqrt{\frac{1}{4}-\lambda} - \frac{1}{2} \right).
    \]
    The notation $\sim$ holds for $j$ tending to $\infty$.
\end{prop}
\begin{proof}
    The proof of this proposition is quite similar to the one of Proposition \ref{LogBranches}. Since $\alpha \geq 0$, $f^-(0,z) = f^-_0(0,z)$. The zeros of $f^-(0,z)$ are some of the previous resonances of $w_0$, the $\alpha_{\pm j} := -i \left(2 j - 1 \pm \sqrt{\frac{1}{4}-\lambda} - \frac{1}{2} \right)$ for $j \in \N$. However, the quantity $\frac{\tan(\frac{\pi}{2}a)}{\Gamma(\frac{a+1}{2})}$
    tends to $a$ when $z$ tends to an $\alpha_j$. This means that the $\alpha_j$ are not zeros of the function $\frac{m_-(z)}{m_+(z)}f^-(0,z)(f^+)'(0,z)$. The same reasoning holds concerning the $\alpha_{-j}$ using $b$ instead of $a$ in the previous limit. Using the fact that $Z$ is a bounded set, and so the zeros of $(f^+)'(0,z)$ are bounded, then \eqref{WT} implies that $w$ has no zero asymptotically in $S_1 \backslash \mathcal{B}$.
    Now, let consider $\alpha_{\pm j}$ for $j \in \N$, outside a sufficiently large disc centred at 0. Let $z$ be a complex number on the boundary of $\left( S_1\backslash \mathcal{B} \right) \cap \overline{D(\alpha_{\pm j},\varepsilon)}$ with $\varepsilon>0$, denoted $\partial\left( \left( S_1\backslash \mathcal{B} \right) \cap \overline{D(\alpha_{\pm j},\varepsilon)}\right)$. Then, we apply Rouché's theorem for such a $z$:
    \[
    \begin{split}
        & \left| \frac{m_-(z)}{m_+(z)}f^-(0,z) - Mf^-(0,z) \right| \\
        & = |f^-(0,z)| \left| \frac{m_-(z)}{m_+(z)}- M \right|< \left| M f^-(0,z)(f^+)'(0,z)\right|,
    \end{split}
    \]
    \Va{Préciser pourquoi cette inégalité est vraie}
    since $\frac{m_-(z)}{m_+(z)} \to M$ when $z \to \infty$ in $S_1 \backslash \mathcal{B}$.
    So, using Rouché's theorem, holomorphic functions $\frac{m_-(z)}{m_+(z)}f^-(0,z)$ and $Mf^-(0,z)$ have their zeros as close as we want in the compact $\left( S_1\backslash \mathcal{B} \right) \cap \overline{D(\alpha_{\pm j},\varepsilon)}$. Since $\frac{m_-(z)}{m_+(z)}f^-(0,z)(f^+)'(0,z)$ has no zero in this compact set, asymptotically, this is also the case for $w$.
    
\end{proof}
In $S_2$, we observe the same global behaviour for the resonances as in Proposition the case of a potential $q$ such that $0 \notin (\alpha,\beta)$: they are still located on logarithmic branches. 

In the case $\supp{q} \subset \R_-$, we can use the symmetries of the problem under the change of variable $\tilde{x} = -x$. A potential $q$ with its support in $\R_-$ has now its support in $\R_+$, where we know how to conclude.

We observe that the resonances initially created by a Pöschl-Teller potential and located on parallel lines with respect to the imaginary axis are asymptotically on two logarithmic branches when we consider a compactly supported perturbation of this potential. See Figure \ref{Dessin2} for a representation of these results.

Since $\Re(c) >0$ and $\Re(b) >0$, we can apply Stirling formula to prove that it exists a constant $C_2$ such that
\[
\left|\frac{\Gamma(c)}{\Gamma(b)}\right| \leq  |z|^{1/2+\mu}.
\]
Finally, we obtain that 
\begin{equation}
\left|F(c-a,c-b,c;\zeta)\right| \leq 1+o(|z|^{-1}).
\label{EqIm12}
\end{equation}
We use Proposition \ref{changement} to obtain an expression for the derivative of the hypergeometric function:
\[
\frac{\mathrm{d}}{\mathrm{d}\zeta}F(a,b,c;\zeta) = \frac{(c-a)(c-b)}{c}F(c-a+1,c-b+1,c+1;\zeta).
\]
We apply \eqref{Majo} with this new quantity, and using Stirling formula we deduce that
\[
\begin{split}
& \left| \frac{(c-a)(c-b)}{c}F(c-a+1,c-b+1,c+1;\zeta) \right| \\
& \leq  \left|  \frac{(c-a)(c-b)}{c} \frac{\Gamma(c+1)}{\Gamma(c-b)} \right||z|^{-\frac{1}{2}-\mu} 1+o(|z|^{-1})\\
& \leq \left|  \frac{(c-a)(c-b)}{c} \right| \left| \frac{\Gamma(c+1)}{\Gamma(b+1)} \right||z|^{-\frac{1}{2}-\mu} 1+o(|z|^{-1})\\
& \leq \frac{C}{|z|}1+o(|z|^{-1}).
\end{split}
\]
We can easily generalise this result for every order of derivative:
\[
\left|\frac{\mathrm{d^k}}{\mathrm{d}\zeta^k}F(a,b,c;\zeta)\right| \leq \frac{C}{|z|^{k-1}}1+o(|z|^{-1}).
\]
\end{comment}

We can now, similarlyapply Lebesgue's dominated convergence theorem, using the last estimate to conclude about the fact that the integral term
\begin{equation*}
e^{-iz\frac{3\beta-\alpha}{2}}\int_{\frac{\alpha+\beta}{2}}^{\frac{3\beta-\alpha}{2}} \partial_t^{p+1}\left[ K^+((\alpha+\beta)/2,t) F\left(c-a,c-b,c; \frac{1}{1+e^{2t}} \right)\right] e^{itz} \, \mathrm{d}t
\label{Integral}
\end{equation*}
tends to zero as $|z|$ tends to infinity.
\begin{comment}
Indeed, Proposition \ref{changement} gives us that the derivatives of an hypergeometric function is still an hypergeometric function, with a coefficient $(iz)^{-k}$ for the $k^{th}$ derivative. Since the hypergeometric tends to $1$ in $S_2$, it is bounded on $\C^-$. Lebesgue's dominated convergence theorem finally proves that the quantity 
\[
e^{-iz\frac{3\beta-\alpha}{2}}\int_{\frac{\alpha+\beta}{2}}^{\frac{3\beta-\alpha}{2}} \partial_t^{p+1}\left[ K^+((\alpha+\beta)/2,t) F\left(c-a,c-b,c; \frac{1}{1+e^{2t}} \right)\right] {e^{itz}} \, \mathrm{d}t
\]
tends to 0.